\numberwithin{equation}{subsection}
\newtheoremstyle{notes} {} {} {} {} {\bfseries} {.} {.5em} {}
\theoremstyle{plain}
\newtheorem{prop}[subsection]{Proposition}
\newtheorem{lemma}[subsection]{Lemma}
\newtheorem{cor}[subsection]{Corollary}
\newtheorem{thm}[subsection]{Theorem}
\theoremstyle{remark}
\newtheorem{rem}[subsection]{Remark} 
\theoremstyle{remark}
\newcommand{\tto}{\twoheadrightarrow}
\newcommand{\KL}{\unlhd}
\newcommand{\Hom}{{\mathrm{Hom}}}
\newcommand{\Ext}{{\mathrm{Ext}}}
\newcommand{\Rad}{{\mathrm{Rad}}}
\newcommand{\eins}{\leavevmode\hbox{\small1\kern-3.8pt\normalsize1}}
\newcommand{\op}{{\mathrm{op}}}
\newcommand{\res}{{\rm Res} }
\newcommand{\Res}{{\rm Res}^{\fg}_{\fg_{\oa}} }
\newcommand{\ind}{{\rm Ind} }
\newcommand{\Ind}{{\rm Ind}^{\fg}_{\fg_{\oa}} }
\newcommand{\Ann}{{\rm Ann} }
\newcommand{\len}{\mathtt{l}}
\newcommand{\mC}{\mathbb{C}}
\newcommand{\mN}{\mathbb{N}}
\newcommand{\mZ}{\mathbb{Z}}
\newcommand{\ad}{{\mathrm{ad}}}
\newcommand{\dd}{{\mathbf{d}}}
\newcommand{\UU}{\mathbf{U}}
\newcommand{\fb}{{\mathfrak b}}
\newcommand{\fg}{{\mathfrak g}}
\newcommand{\fh}{{\mathfrak h}}
\newcommand{\fn}{{\mathfrak n}}
\newcommand{\fm}{{\mathfrak m}}
\newcommand{\cD}{\mathcal{D}}
\newcommand{\cH}{\mathcal{H}}
\newcommand{\cF}{\mathcal{F}}
\newcommand{\cO}{\mathcal{O}}
\newcommand{\cL}{\mathcal{L}}
\newcommand{\cB}{\mathcal{B}}
\newtheorem{pfVogan}[subsection]{Proof of Theorem~\ref{thmVogan}}
\newtheorem{pfAS}[subsection]{Proof of Theorem~\ref{thmAS}}
\newtheorem{pfMain}[subsection]{Proof of Theorem~\ref{thmMain}}
\newtheorem{pfGeneric}[subsection]{Proof of Theorem~\ref{generic}}
\newcommand{\oa}{\bar{0}}
\newcommand{\ob}{\bar{1}}
\newcommand{\End}{{\rm End}}
\newcommand{\Zf}{{\fg\mbox{-mod}_{Z}}}
\newcommand{\Lann}{{\rm LAnn}}
\newtheoremstyle{construction} {} {} {} {} {\bfseries} { } {0pt} {}
\theoremstyle{construction}
\title[Super primitive spectrum]{The primitive spectrum of basic classical Lie superalgebras} 
\author{Kevin~Coulembier}
\keywords{Lie superalgebra, primitive spectrum, Kazhdan-Lusztig order, Harish-Chandra bimodules, completion functors, Jantzen's middle, Kazhdan-Lusztig theories, partial approximation functors, deformed Weyl group orbits}
\subjclass[2010]{16S30, 16D60, 17B10}
\begin{document} 
\date{} 
\begin{abstract}
We prove Conjecture 5.7 in [arXiv:1409.2532], describing all inclusions between primitive ideals for the general linear superalgebra in terms of the $\Ext^1$-quiver of simple highest weight modules. For arbitrary basic classical Lie superalgebras,
we formulate two types of Kazhdan-Lusztig quasi-orders on the dual of the Cartan subalgebra, where one corresponds to the above conjecture. Both orders can be seen as generalisations of the left Kazhdan-Lusztig order on Hecke algebras and are related to categorical braid group actions. We prove that the primitive spectrum is always described by one
of the orders, obtaining for the first time a description of the inclusions. We also prove that the two orders are identical if category~$\cO$ admits `enough' abstract Kazhdan-Lusztig theories. In particular, they are identical for the general
linear superalgebra, concluding the proof of the conjecture.
	\end{abstract}

\maketitle

\vspace{-3mm}

\section{Introduction}

\subsection{} We refer to the primitive ideals in the universal enveloping algebra of a Lie (super)algebra, {\it i.e.} the annihilator ideals of the simple modules, as the primitive ideals of the Lie (super)algebra. A principal motivation to study these, stems from the open problem of classifying simple modules. Except for~$\mathfrak{sl}(2)$ and some related Lie superalgebras, a solution to this problem seems to be far out of reach, for the moment. This motivates the study of rougher invariants for simple modules, such as annihilator ideals.

\subsection{}In \cite[Th\'eor\`eme~1]{Duflo}, Duflo proved that, for a reductive {\em Lie algebra}, every primitive ideal is equal to the annihilator ideal of some simple {\em highest weight} module, for an arbitrary but fixed Borel subalgebra. In order to obtain a classification of primitive ideals, it remained to determine for which simple highest weight modules the corresponding annihilator ideals are equal, and more generally, when there is an inclusion. In \cite[Theorem~2.12]{Borho}, Borho and Jantzen reduced this problem to regular central characters, by using a translation principle. The correct description for a regular central character was then conjectured by Joseph in \cite[Conjecture~5.7]{Joseph1} and proved by Vogan in \cite[Theorem~3.2]{Vogan}. The inclusion order for a regular central character is the left Kazhdan-Lusztig (KL) order of~\cite{KL}, which followed {\it a posteriori} from the validity of the KL conjecture, as proved in \cite{BB, Kashiwara, EW}.

\subsection{} For simple classical {\em Lie superalgebras}, the analogue of Duflo's theorem was proved by Musson in \cite{Musson}, leaving again the question for which simple highest weight modules inclusions between annihilator ideals occur. An overview of some complications is given in \cite[Section~1]{CouMus}, while the concrete example in \cite[Section~7]{CouMus} demonstrates that inclusions are far more prevalent compared to the Lie algebra case. Since the result in \cite{Musson}, a number of partial answers and special cases were obtained by several authors. In \cite{sl21, osp12r}, Musson solved the problem for~$\mathfrak{sl}(2|1)$ and~$\mathfrak{osp}(1|2n)$, and in~\cite{q2}, Mazorchuk solved the case~$\mathfrak{q}(2)$. In~\cite{Letzter}, Letzter obtained a classification of the primitive ideals for~$\mathfrak{gl}(m|n)$, but the question of which (strict) inclusions occur was not settled. In \cite{CM1} Mazorchuk and the author obtained a general principle, for classical Lie superalgebras, to derive inclusions through {\em twisting functors}. In the current paper we will prove that this procedure is in fact exhaustive. Furthermore, in \cite{CM1} the equalities between annihilator ideals in the generic region, as defined in Definition~7.1 of {\it op. cit.}, were classified. The description required the introduction of a new deformation of the Weyl group orbits, called the {\em star orbits}.  In \cite{CouMus}, Musson and the author obtained all inclusions for~$\mathfrak{gl}(m|1)$ and formulated for the first time a complete conjectural description for the inclusions for~$\mathfrak{gl}(m|n)$. We prove this conjecture in the current paper. The inclusion order is determined by the knowledge of the first extensions between simple modules in $\cO$ in \cite{CLW, BLW}. 

\subsection{}
In \cite{KL}, Kazhdan and Lusztig defined two quasi-orders (called left and right KL order) on Coxeter groups and their Hecke algebras. The way these orders come into play in the primitive spectrum for Lie algebras originates in the {\em categorification} of the {\em integral group algebra~$\mZ[W]$} of the Weyl group $W$, on the principal block $\cO_0$ of the BGG category~$\cO$, through {\em projective functors} in \cite{BG}. Alternatively, the left and right order can be introduced on~$\cO_0$, by categorifications of the {\em braid group $\cB_W$}, see \cite{AS, shuff, braid}. This is given in terms of {\em twisting functors}, for the left order, or through {\em shuffling functors}, for the right order. We refer to \cite[Sections 5 and 6]{AlgCat} for an overview of these categorifications. In singular blocks, twisting functors still behave well, contrary to projective and shuffling functors. As twisting commutes with translation functors, they also introduce a left KL order on singular blocks, coinciding with the inclusion order. 

Similarly, we define a {\em completed left KL order} for superalgebras in terms of twisting functors, where the latter again correspond to a braid group action, by \cite{CM1}. In \cite{CouMus}, another KL order was defined for superalgebras, as an analogue of the realisation on $\cO$ of the left order for Lie algebras, which is determined by the~$\Ext^1$-quiver. We prove, by studying Jantzen middles, that for superalgebras for which $\cO$ admits suitable abstract KL theories in the sense of~\cite{CPS}, our completed KL order is identical to the KL order of \cite{CouMus}. In particular, this is true for~$\mathfrak{gl}(m|n)$. We also provide an alternative formulation of the KL order in terms of the {\em partial approximation} principle introduced in \cite{Khomenko}. Finally, we show that in the {\em generic region}, the KL order is described by the combination of the left KL order on the Weyl group of \cite{KL} and the deformed Weyl group orbits introduced in \cite{CM1}.

\subsection{} The paper is organised as follows. In Section~\ref{secPrel} we gather the necessary preliminaries. In Section~\ref{secTools} we develop some general principles which will be useful in the remainder. In Section~\ref{secHC} we obtain some results on Harish-Chandra bimodules, similar to \cite{BG, Vogan}. Some of these had already essentially been obtained in \cite{Serre}. In Section~\ref{secCompletion} we introduce Enright's completion functors on category~$\cO$ and relate them to twisting and partial approximation. In Section~\ref{secAS} we study an analogue of the Jantzen middle, which appeared for Lie algebras in \cite{AS}. In Section~\ref{secConclusion} we bring all the above together to prove \cite[Conjecture~5.7]{CouMus} and describe the primitive spectrum for arbitrary basic classical Lie superalgebras. In Section~\ref{secgeneric} we describe the completed KL order explicitly for the generic region.

\subsection{} In the current paper, we do not consider the classical Lie superalgebras of `queer type'. The main reason being that a simultaneous treatment of queer type and basic classical type seems to lead to very intricate formulation of statements. However, the main ingredients of this paper have also been developed in that setting, {\it viz.} strongly typical blocks have been studied in \cite{Frisk}, twisting functors in \cite{CM1} and Harish-Chandra bimodules in \cite{Serre}. It would be particularly interesting to compare the solution for~$\mathfrak{q}(n)$ with the star action of~\cite{Dimitar}, as has already been done for the generic region in \cite{CM1}.

\subsection{} Finally, we note that this paper offers a rather self-contained proof of the ordering of the primitive spectrum for reductive Lie algebras. In particular, the proof bypasses translation to the walls, as developed in \cite{Borho}. We also benefit from the fact that, contrary to when the breakthroughs in \cite{Joseph1, Vogan} were obtained, by now the KL conjecture is a theorem, projective functors have been classified, twisting and completion functors have been introduced and the equivalence between category~$\cO$ and a certain category of Harish-Chandra bimodules has been established. These facts allow for a more elegant treatment.

\section{Preliminaries}\label{secPrel}
We set $\mN=\{0,1,2,\cdots\}$.
We always work over the ground field of complex numbers $\mC$. In particular $\otimes_\mC$, $\Hom_\mC$ and~$\dim_\mC$ will be abbreviated to $\otimes$, $\Hom$ and~$\dim$. For an associative algebra $A$ over $\mC$ we denote the category of finitely generated modules by $A$-mod and the category of all modules by $A$-Mod. We are interested in the Lie superalgebras 
\begin{equation}\label{list}\mathfrak{gl}(m|n),\;\,\mathfrak{sl}(m|n),\;\, \mathfrak{psl}(n|n),\;\,\mathfrak{osp}(m|2n),\;\,D(2,1;\alpha),\;\, G(3)\mbox{ and }\;\, F(4),\end{equation}
see \cite{bookMusson} for a complete treatment of such Lie superalgebras and their universal enveloping algebras. Whenever we consider a Lie superalgebra `$\fg$', it is assumed to be in this list.

\subsection{}\label{Prel111}
As a super vector space,~$\fg$ decomposes into its even and odd subspace as~$\fg=\fg_{\oa}\oplus\fg_{\ob}$,
where~$\fg_{\oa}$ is a reductive Lie algebra. We denote the universal enveloping algebra by $U=U(\fg)$ and its centre by $Z(\fg)$. We also use $U_{\oa}=U(\fg_{\oa})$. As $U$ is a finite extension over the noetherian ring $U_{\oa}$, it is noetherian itself. We consider the antiautomorphism $t$ on~$\fg$ given by $t(X)=-X$ which extends to an antiautomorphism of $U$.

We set~$\fg\mbox{-mod} =U\mbox{-mod}$. The full subcategory of~$\fg$-mod on which~$Z(\fg)$ acts locally finitely is denoted by~$\Zf$. For a central character~$\chi:Z(\fg)\to\mC$, we denote by~$\fg\mbox{-mod}_\chi$ the category of modules annihilated by some power of~$\fm_{\chi}:=\ker\chi$.
The endofunctor $(-)_\chi$ of~$\Zf$ is defined as taking the largest summand in~$\fg\mbox{-mod}_{\chi}$. We have two exact functors$$\res:=\Res:\fg\mbox{-mod}\to\fg_{\oa}\mbox{-mod}\quad\mbox{and}\quad\ind:=\Ind:\fg_{\oa}\mbox{-mod}\to\fg\mbox{-mod},$$
The functor~$\res$ is both left and right adjoint to $\ind$, as induction and coinduction are isomorphic, see e.g. \cite[Section~2.3.5]{Gorelika}. 

\subsection{}\label{secrho} Consider a triangular decomposition $\fg=\fn^-\oplus\fh\oplus \fn^+,$
where~$\fh\subset\fg_{\oa}$ is referred to as the Cartan subalgebra and~$\fb=\fh\oplus\fn^+$ as the Borel subalgebra. 
The set positive roots of~$\fg_{\oa}$ with respect to the above triangular decomposition is denoted by $\Delta^+_{\oa}$ and the odd positive roots by $\Delta_{\ob}^+$. The partial order $\le$ on $\fh^\ast$ is transitively generated by $\lambda-\beta\le\lambda$ for any $\beta\in \Delta^+$.

We denote by $\rho$ the half sum $\rho_{\oa}$ of even positive roots minus the half sum $\rho_{\ob}$ of odd positive roots. The Weyl group of~$\fg_{\oa}$
is denoted by $W=W(\fg_{\oa}:\fh)$. By the term {\em simple even root} we always mean a positive even root, which is simple in~$\Delta_{\oa}^+$, or equivalently corresponds to a simple
reflection in the Coxeter group $W$. Such a root is {\em not necessarily simple in~$\Delta^+$}. We assume a $W$-invariant
form $\langle\cdot,\cdot\rangle$ on~$\fh^\ast$ as in \cite[Theorem~5.4.1]{bookMusson} and set $\alpha^\vee=2\alpha/\langle \alpha,\alpha\rangle$ for any even root $\alpha$. We consider the actions $w\cdot\lambda=w(\lambda+\rho)-\rho$ and~$w\circ\lambda=w(\lambda+\rho_{\oa})-\rho_{\oa}$ for all $w\in W$ and~$\lambda\in \fh^\ast$. 

A weight $\lambda$ is called {\em integral} if $\langle\lambda,\alpha^\vee\rangle\in\mZ$ for all (simple) even roots~$\alpha$. The set of integral weights is denoted by $\Lambda_0\subset\fh^\ast$. Furthermore, for any $\lambda\in\Lambda$ we set $\Lambda_\lambda=\lambda+\Lambda_0$. For arbitrary
$\Lambda\in\fh^\ast/\Lambda_0$ we set $\Delta^+_{\oa,\Lambda}$ equal to the subset of $\Delta^+_{\oa}$ of $\alpha$ for which $\langle \lambda,\alpha^\vee\rangle\in\mZ$ for~$\lambda\in\Lambda$ arbitrary. We define a Coxeter group $W_\Lambda$ generated by the reflections~$s_\alpha\in W$ for which $\alpha$ becomes simple in~$\Delta^+_{\oa,\Lambda}$. For $w\in W_{\Lambda}$ and $\lambda\in \Lambda$ we have~$w\cdot \lambda\in\Lambda$. A weight $\lambda\in\Lambda$
is called {\em dominant} if $\lambda$ is dominant for the $\rho$-shifted~$W_\Lambda$-action and {\em regular} if and only if $\langle \lambda+\rho,\alpha^\vee\rangle\not=0$ for all simple even roots~$\alpha$. 

We say that~$\lambda$ is {\em strongly typical} if $\langle\lambda+\rho,\gamma\rangle\not=0$ for all $\gamma\in\Delta_{\ob}$. This is equivalent to the condition that the central character of the module~$L(\lambda)$ is strongly typical in the sense of~\cite[Section~1.3]{Gorelik}, by \cite[Section~2.3]{Gorelik}. A special role will be played by weights which are simultaneously regular, strongly typical and dominant. Therefore we introduce the term {\em super dominant} weights for these.

\begin{lemma}\label{lemZar}
For an arbitrary $\mu\in \fh^\ast$, there is a $\kappa\in \Lambda_0$ such that~$L(\kappa)$ is finite dimensional and for which $\mu+\kappa$ is regular strongly typical.
\end{lemma}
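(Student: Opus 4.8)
The plan is to reduce the statement to two classical facts about the reductive Lie algebra $\fg_{\oa}$: the density of dominant integral weights and the fact that a Zariski-dense subset of $\fh^\ast$ consists of strongly typical weights. The key observation is that the two conditions we want to impose on $\mu+\kappa$ — being regular and being strongly typical — are each \emph{generic} in an appropriate sense, so the only real work is to arrange that this genericity can be realised \emph{inside the single coset} $\mu+\Lambda_0$ by a weight $\kappa$ for which $L(\kappa)$ is finite dimensional.

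First I would recall the parametrisation of finite-dimensional simples: $L(\kappa)$ is finite dimensional precisely when $\kappa\in\Lambda_0$ is dominant integral for the $\circ$-action of $W$ (equivalently, $\langle\kappa+\rho_{\oa},\alpha^\vee\rangle\in\mZ_{>0}$ for all simple even roots $\alpha$), since such a highest weight yields an integrable $\fg_{\oa}$-module and hence a finite-dimensional $\fg$-module by the standard induction/parity argument. So it suffices to find $\kappa$ in the (infinite) set $P^{++}$ of such dominant integral weights with $\mu+\kappa$ regular strongly typical. Now fix any $\kappa_0\in P^{++}$; then $\kappa_0 + n\kappa_0 \in P^{++}$ for all $n \in \mN$ — or more flexibly, $\kappa_0 + \nu \in P^{++}$ for every $\nu \in P^{++}$, so $P^{++}$ contains an affine sub-semigroup that is Zariski-dense in $\fh^\ast$ (its span contains a basis, as the fundamental weights of $\fg_{\oa}$ span $\fh^\ast$).

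Next I would invoke the two defining conditions. Regularity of $\mu+\kappa$ fails only on the finite union of hyperplanes $\langle \mu+\kappa+\rho,\alpha^\vee\rangle = 0$, $\alpha$ a simple even root; strong typicality of $\mu+\kappa$ fails only on the finite union of hyperplanes $\langle\mu+\kappa+\rho,\gamma\rangle=0$, $\gamma\in\Delta_{\ob}$. Thus the `bad' set $B \subset \fh^\ast$ is a finite union of proper affine hyperplanes in the variable $\kappa$. Since $P^{++}$ is Zariski-dense, it cannot be contained in $B$; hence there exists $\kappa\in P^{++}$ with $\mu+\kappa\notin B$, i.e.\ with $\mu+\kappa$ both regular and strongly typical. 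This $\kappa$ lies in $\Lambda_0$ and $L(\kappa)$ is finite dimensional, completing the argument.

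The only point requiring a little care — and what I would call the main (mild) obstacle — is making precise that $P^{++}$, a \emph{translate of a semigroup of lattice points} rather than a whole subspace, is genuinely Zariski-dense, so that it escapes the finitely many bad hyperplanes. This is where one uses that $P^{++}$ contains $\kappa_0 + \mN\text{-span}\{\varpi_1,\dots,\varpi_r\}$ for the fundamental weights $\varpi_i$ of $\fg_{\oa}$, and that a set containing a full-rank affine sublattice's positive cone is Zariski-dense in $\fh^\ast$; equivalently, a nonzero polynomial cannot vanish on all of $\kappa_0+\sum_i \mN\varpi_i$. Everything else is bookkeeping with the explicit inequalities and equalities defining the three adjectives `regular', `strongly typical', `dominant'.
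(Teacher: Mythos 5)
Your overall strategy is the same as the paper's: exhibit a Zariski-dense subset of the coset $\mu+\Lambda_0$ consisting of weights $\kappa$ with $L(\kappa)$ finite dimensional, and intersect it with the Zariski-open set of regular strongly typical weights. The density observation and the openness observation are both correct and are exactly what the paper uses. However, there is a genuine error in the first step.

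You claim that $L(\kappa)$ is finite dimensional \emph{precisely when} $\kappa$ is $\circ$-dominant integral, ``since such a highest weight yields an integrable $\fg_{\oa}$-module and hence a finite-dimensional $\fg$-module by the standard induction/parity argument.'' This is false for most of the algebras in \eqref{list}. Dominance for $\fg_{\oa}$ is necessary but not sufficient for $L(\kappa)$ to be finite dimensional: by Kac's classification, for the Type II algebras $\mathfrak{osp}(m|2n)$ with $m\ge 2$, and for $D(2,1;\alpha)$, $G(3)$, $F(4)$, there is an additional \emph{supplementary condition} on $\kappa$, and plenty of $\fg_{\oa}$-dominant integral weights violate it. The ``induction/parity argument'' you sketch does not produce a finite-dimensional $\fg$-module with highest weight $\kappa$: the module $\ind L_0(\kappa)$ is finite dimensional but has a different highest weight (shifted by $2\rho_{\ob}$), and the Kac-module argument that does work is specific to Type I. So your set $P^{++}$ does not consist of weights with $L(\kappa)$ finite dimensional, and the proof does not go through as written for all $\fg$ in the list.

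The fix is exactly what the paper does: replace the incorrect ``iff'' by a \emph{sufficient} condition. Namely, there exists $d_0$ such that whenever $\langle\kappa+\rho,\alpha^\vee\rangle\in\mZ_{\ge d_0}$ for all even positive roots $\alpha$, the module $L(\kappa)$ is finite dimensional (the supplementary conditions become automatic far enough in the dominant cone; this is \cite[Lemma~7.2]{CM1}). This shifted cone $\Phi_{d_0}$ is still Zariski dense, so your density-plus-openness conclusion then applies verbatim. Your observation that this dense set is a translate of a lattice cone rather than a linear subspace, and hence that Zariski density must be argued (a nonzero polynomial cannot vanish on a full-rank lattice cone), is a correct and useful elaboration of a point the paper states without proof.
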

\begin{proof}
For any integer $d$ set 
$$\Phi_d=\{\kappa\in\fh^\ast\,|\, \langle \kappa+\rho,\alpha^\vee\rangle\,\in\mZ_{\ge d},\; \;\forall\;\alpha\in\Delta_{\oa}^+\}.$$
There exists~$d_0\in\mN$ such that $L(\kappa)$ is finite dimensional if $\kappa\in\Phi_{d_0}$.
This follows for instance from~\cite[Lemma~7.2]{CM1}.
The set $\mu+\Phi_{d_0}$ is Zariski dense in $\fh^\ast$. On the other hand, the set of regular strongly typical weights is Zariski open, as it is the complement of a union of hyperplanes. A dense and an open set always have a non-trivial intersection.
\end{proof}

\subsection{}\label{secO}

We consider the BGG category~$\cO(\fg,\fb)$ associated to
the triangular decomposition of~$\fg$, see \cite{BGG, Humphreys, bookMusson}. Aside from
the classical definition, $\cO(\fg,\fb)$ is the full subcategory of~$\fg$-mod of modules $M$
such that~$\Res M$ is an object in~$\cO(\fg_{\oa},\fb_{\oa})$. Note that~$\ind$ also restricts
to a functor $\cO(\fg_{\oa},\fb_{\oa})\to \cO(\fg,\fb)$.

By the above it is clear that~$\cO(\fg,\fb)$ only depends on~$\fb_{\oa}=\fg_{\oa}\cap\fb$, so we could use the notation~$\cO(\fg,\fb_{\oa})$. However, for each different~$\fb$, with same underlying~$\fb_{\oa}$, the category has a different {\em highest weight category} structure, for $(\fh^\ast,\le)$, with standard modules the Verma modules introduced below. Therefore we will use the notation~$\cO(\fg,\fb)$ to the denote the category~$\cO(\fg,\fb_{\oa})$ with the specific structure of a highest weight category corresponding to~$\fb$. When no confusion is possible, we just write $\cO$. We consider the usual simple-preserving duality~$\dd$ on~$\cO$, see e.g. \cite[Section~3.2]{Humphreys}.

A complete set of non-isomorphic simple objects in category~$\cO$ is given by the simple tops of the
Verma modules $\Delta(\lambda)=U(\fg)_{\otimes U(\fb)}\mC_\lambda$ for all $\lambda\in \fh^\ast$, where $\mC_\lambda$ is a one-dimensional~$\fb$-module either purely even or purely odd. It will not be important for us to make a distinction between both, so we ignore parity. We denote these modules by $L(\lambda)$. We also set $\chi_\lambda$ equal to the central character of $L(\lambda)$.
The indecomposable projective cover of~$L(\lambda)$ in~$\cO$ is denoted by $P(\lambda)$ and we set $\nabla(\lambda)=\dd\Delta(\lambda)$.
To make a distinction with the correspondingly defined modules in~$\cO(\fg_{\oa},\fb_{\oa})$ we will denote the latter by $L_0(\lambda)$,
$\Delta_0(\lambda)$ and $P_0(\lambda)$.

For any central character~$\chi$ we denote by $\cO_\chi$ the full subcategory of modules in~$\cO$ which are in~$\fg\mbox{-mod}_\chi$. Similarly for any $\Lambda\in \fh^\ast/\Lambda_0$ we denote by $\cO^\Lambda$ the full subcategory of modules which have only
non-zero weight spaces for weights in~$\Lambda$. Then we have decompositions
$$\cO\;=\;\bigoplus_{\chi}\cO_\chi\;=\;\bigoplus_{\Lambda}\cO^\Lambda\;=\; \bigoplus_{\chi,\Lambda}\cO^\Lambda_\chi.$$
We also set $\Lambda^\chi$ equal to the set of $\lambda\in\Lambda$ for which $\chi_\lambda=\chi$.

When $\lambda$ is super dominant, we call $P(\lambda)$ a {\em super dominant projective} module. Central character considerations
imply that~$P(\lambda)=\Delta(\lambda)$ for super dominant $\lambda$. A justification for the term super dominant is that the category~$\cO^\Lambda$ has the structure of a category with full projective functors where~$P(\lambda)$ is the `dominant object', according to \cite[Definition~1.1]{PF}. This can be proved as in \cite[Theorem~5.1(a)]{Serre} by applying the subsequent Lemma~\ref{generate}(2).

For any simple reflection $s\in W$, we say that~$L(\lambda)$ is $s$-free, resp. $s$-finite, if $s=s_\alpha$ and~$\fg_{-\alpha}$ acts freely, resp. locally finitely, on~$L(\lambda)$. In case~$\alpha$, or $\alpha/2$, is simple in~$\Delta^+$ we find that~$L(\lambda)$ is $s$-free if and only if $s\cdot\lambda\ge \lambda$ (which here is equivalent to~$s\circ\lambda\ge \lambda$), see e.g. \cite[Lemma~2.1]{CM1}. The full subcategory of~$\cO$ of modules which are finite dimensional is denoted by $\cF$, these are the {\em finite dimensional weight modules}. 

We will rely heavily on the following result of Gorelik in \cite[Theorem 1.3.1]{Gorelik}, which builds further on earlier results of Penkov and Serganova in \cite{Penkov, PS}.
\begin{prop}[M. Gorelik]\label{propGor}
Consider~$\chi$ a strongly typical central character. The category~$\fg\mbox{-mod}_\chi$ is equivalent to~$\fg_{\oa}\mbox{-mod}_{\chi^0}$ for a certain central character~$\chi^0:Z(\fg_{\oa})\to\mC$. The equivalence is given by the mutually inverse functors
$$(\res-)_{\chi^0}\qquad\mbox{and}\qquad (\ind -)_{\chi}.$$
This hence also restricts to an equivalence $\cO(\fg,\fb_{\oa})^\Lambda_{\chi}\cong\cO(\fg_{\oa},\fb_{\oa})^\Lambda_{\chi^0}$.
\end{prop}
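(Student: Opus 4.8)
This is \cite[Theorem~1.3.1]{Gorelik}, resting on \cite{Penkov,PS}; should one wish to reprove it, here is the route I would take. The first point is that the two functors are exact (both $\ind$ and $\res$ are exact, and $(-)_\chi$, $(-)_{\chi^0}$ are projections onto direct summands) and that, viewed as functors between $\fg_{\oa}\mbox{-mod}_{\chi^0}$ and $\fg\mbox{-mod}_\chi$, they are left and right adjoint to one another: for $M_0\in\fg_{\oa}\mbox{-mod}_{\chi^0}$ and $N\in\fg\mbox{-mod}_\chi$,
\begin{align*}
\Hom_{\fg}\big((\ind M_0)_\chi,N\big) &= \Hom_{\fg}(\ind M_0,N) = \Hom_{\fg_{\oa}}(M_0,\res N) \\
&= \Hom_{\fg_{\oa}}\big(M_0,(\res N)_{\chi^0}\big),
\end{align*}
and symmetrically with the roles of $\ind$ and $\res$ exchanged. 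Hence it suffices to pin down a central character $\chi^0$ of $Z(\fg_{\oa})$ making the unit $M_0\to(\res(\ind M_0)_\chi)_{\chi^0}$ and the counit $(\ind(\res N)_{\chi^0})_\chi\to N$ into isomorphisms, which by standard arguments can be tested on Verma modules, resp. on projectives.

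The model computation is PBW: $\res\ind M_0\cong\Lambda(\fg_{\ob})\otimes M_0$ as $\fh$-modules, so the $Z(\fg_{\oa})$-central characters occurring on $\res\ind M_0$ are the $\chi^0_{\nu+\omega}$ with $\chi^0_\nu$ a central character of $M_0$ and $\omega$ a weight of $\Lambda(\fg_{\ob})$. It is cleaner to work with the Kac module $\ind^{\fg}_{\fp}L_0(\nu)$, where $\fp=\fg_{\oa}\oplus\bigoplus_{\gamma\in\Delta^+_{\ob}}\fg_\gamma$, which replaces $\Lambda(\fg_{\ob})$ by $\Lambda\big(\bigoplus_{\gamma}\fg_{-\gamma}\big)$, whose weights are sums of distinct negative odd roots. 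With Gorelik's choice of $\chi^0$ — adapted to the odd half $\rho_{\ob}$ of $\rho$ — the statement then reduces to the following, for all $\nu$ with $\chi_\nu=\chi$ (so $\nu$ is strongly typical): $\ind^{\fg}_{\fp}L_0(\nu)$ is simple, equal to $L(\nu)$, and $\res L(\nu)$ meets the $\chi^0$-summand of $\fg_{\oa}\mbox{-mod}$ in exactly $L_0(\nu)$; a comparison of simple objects together with their projective covers then upgrades this to the asserted equivalence through the adjoint pair above.

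The crux — and the genuinely hard step — is simplicity of the Kac module, and that is the Penkov--Serganova input, which I would re-derive via the contravariant form. Using the antiautomorphism $t$ one equips $\ind^{\fg}_{\fp}L_0(\nu)$ with a symmetric contravariant form; its Shapovalov-type determinant on each weight space factors into linear forms, and the form is nondegenerate if and only if $\prod_{\gamma\in\Delta^+_{\ob}}\langle\nu+\rho,\gamma\rangle\ne0$ — equivalently, the corresponding element $\cT$ of the ghost centre of $U(\fg)$, assembled from the products $f_\gamma e_\gamma$ (quadratic for non-isotropic $\gamma$, with a linear correction for isotropic $\gamma$), acts on $\fg\mbox{-mod}_\chi$ by a scalar that is, up to normalisation, this very product. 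By definition of strong typicality the product is nonzero, so the form is nondegenerate, the Kac module is simple, and the inverse of $\cT$ provides the splittings that identify the composite functors, after truncation, with the identities. The passage to category $\cO$ is then formal: $\ind$ and $\res$ preserve $\cO(\fg,\fb_{\oa})$ in both directions — the latter by the very definition of $\cO(\fg,\fb_{\oa})$, the former as noted in \ref{secO} — and all functors involved preserve the weight support $\Lambda$, so the equivalence restricts to $\cO(\fg,\fb_{\oa})^{\Lambda}_{\chi}\cong\cO(\fg_{\oa},\fb_{\oa})^{\Lambda}_{\chi^0}$.
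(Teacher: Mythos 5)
The paper gives no proof of this proposition; it cites \cite[Theorem 1.3.1]{Gorelik}, as you yourself note. Your sketch correctly reconstructs the overall structure of the cited argument: exploit the biadjunction of $(\ind -)_\chi$ and $(\res -)_{\chi^0}$ coming from the coincidence of induction and coinduction, reduce to showing the unit and counit are isomorphisms, and feed in the Penkov--Serganova simplicity of generalised Kac modules, which Gorelik controls through an element of the anticentre.

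Two cautions on precision, neither of which overturns the strategy. First, the equivalence asserted is between the full categories $\fg\mbox{-mod}_\chi$ and $\fg_{\oa}\mbox{-mod}_{\chi^0}$, not only between $\cO$-blocks; so ``tested on Verma modules, resp.\ on projectives'' should really be ``tested on a projective generator of the ambient category,'' e.g.\ $U_{\oa}/\fm_{\chi^0}^n$ for large $n$ (alternatively, prove the $\cO$-version and then bootstrap, but that order needs to be spelt out). Second, the anticentre element $\cT$ is not literally ``assembled from the products $f_\gamma e_\gamma$'': Gorelik constructs it through a super analogue of the Harish--Chandra projection as the lift of a particular $W$-anti-invariant polynomial on $\fh^\ast$, and the fact that its scalar action on $\fg\mbox{-mod}_\chi$ is a normalisation of $\prod_{\gamma\in\Delta^+_{\ob}}\langle\nu+\rho,\gamma\rangle$ is the substantial lemma, not a formal PBW computation. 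With those adjustments your route is the one the cited reference takes.
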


\subsection{}\label{introHC}
We set $\UU=U\otimes U^{\op}$, so $U$-mod-$U\cong \UU$-mod. We define three injective algebra morphisms $U\hookrightarrow \UU$ by
$$\phi_l(X)=X\otimes 1,\quad \phi_r(X)=-1\otimes X\;\mbox{ and}\quad \phi_{\ad}(X)=X\otimes 1-1\otimes X$$
for all $X\in\fg$. Consequently, $\phi_r(u)=1\otimes t(u)$ for all $u\in U$.
 
We consider the functor $(-)^{\ad}$ as the restriction functor from $\UU$-mod to $U$-Mod via $\phi_{\ad}$. The category $\cH$ of {\em Harish-Chandra} bimodules is the full subcategory of~$U$-mod-$U$ of bimodules $M$ such that every indecomposable summand in~$M^{\ad}$ is in~$\cF$. Note that this is equivalent to the definition in \cite[Section~5.1]{Serre}. For a central character~$\chi$, the full subcategory of~$\cH$ of objects which are annihilated by~$\fm_\chi$ through $\phi_r$ is denoted by~$\cH_{\chi}^1$.

We consider~$\Hom_{\mC}(-,-)$ as an exact functor from~$\fg$-mod$\times\fg$-mod to $U$-mod-$U$, where for~$M,N\in\fg$-mod, $\alpha\in \Hom_{\mC}(M,N)$ and~$u,v\in U$ the map $u\alpha v\in\Hom_{\mC}(M,N)$ is defined as $(u\alpha v)(a)=u(\alpha(va))$ for~$a\in M$. The functor $\cL(-,-)$ from~$\fg$-mod$\times\fg$-mod to $\cH$ is the composition of~$\Hom_{\mC}(-,-)$ with the functor taking the maximal $\UU$-submodule which is in~$\cH$. By construction, $\cL(-,-)$ is left exact.

For any $M\in \fg$-mod, we define~${}_\times M$ and~$M_{\times}$ as objects in~$U\mbox{-mod-}U$. For $_{\times}M$ this is by keeping the left $U$-action and setting the right $U$-action to zero. For $M_{\times}$ the left action is set to zero while for any $v\in M$ and~$u\in U$ we set $vu=t(u)v$. In particular, this means 
$({}_\times M)^{\ad}\cong M\cong(M_\times)^{\ad}.$
Thus, for any $V\in \cF$ it follows that~${}_\times V$ and~$ V_\times$ belong to~$ \cH$.

The regular bimodule $U$ is in $\cH$. It is a standard property that for $V\in \cF$ and~$M\in \UU$-mod
\begin{equation}\label{EU}
\Hom_{\UU}({}_\times V\otimes U, M)\cong \Hom_{U}(V,M^{\ad}),
\end{equation}
see e.g. \cite[Section~5]{BG}. Note that tensor products of $\UU$-modules are, as usual, defined through the canonical Hopf superalgebra structure on universal enveloping algebras. For the particular example of ${}_\times V\otimes U$ above, for $v\in V$ and $u\in U$ and $X,Y\in\fg$, the action is
$$X(v\otimes u)Y=Xv\otimes uY\;+\;(-1)^{|X||v|}v\otimes XuY,$$
with $|X|$ and $|v|$ the parity of $X$ and $v$. It follows also from the definitions that
\begin{equation}\label{invariants}\Hom_{\fg}(\mC,\cL(M,N)^{\ad})\;\cong\; \Hom_{\fg}(M,N).\end{equation}

\subsection{}\label{IntroTw}
For any simple reflection $s\in W$, the endofunctor $T_s$ of $\cO(\fg,\fb_{\oa})$, called the Arkhipov {\em twisting functor}, was introduced in \cite[Section~5]{CM1}, see also~\cite{CMW, AS}. By \cite[Lemma~5.7]{CM1} we have for $\lambda\in\Lambda_0$ and $s=s_\alpha$ such that $\alpha$ or $\alpha/2$ is simple in $\Delta^+$ that
\begin{equation}
\label{twistVer}T_s M(\lambda)\;\cong\; M(s\cdot\lambda)\qquad\mbox{ if } s\cdot\lambda\le \lambda.
\end{equation}
The functor $T_s$ is right exact, see \cite[Lemma~5.4]{CM1}. In \cite[Proposition~5.11]{CM1} it is proved that $T_s$ restricts to endofunctors of each subcategory~$\cO_\chi$ and that the left derived functor $\cL T_s$ provides an auto-equivalence of the bounded derived category $\cD^b(\cO_\chi)$. Furthermore, by \cite[Lemma~5.4]{CM1}, for $M\in \cO$ we have
\begin{equation}\label{eqLT}
\begin{cases}
T_s M=0&\mbox{if $M$ is $s$-finite};\\
 \cL_1 T_s M=0& \mbox{if $M$ is $s$-free};\\
 \cL_k T_sM=0& \mbox{if $k>1$}.
\end{cases}
\end{equation}
Finally we note that, for $T^0_s$ the corresponding twisting functor on $\cO(\fg_{\oa},\fb_{\oa})$, we have
\begin{equation}\label{eqTRI}
T_s\circ\ind\,\cong\,\ind\circ T^0_s\quad\mbox{ and }\quad T_s^0\circ\res\,\cong\,\res\circ T_s.
\end{equation}

\subsection{}\label{introId}By \cite{Musson} or \cite[Theorem 15.2.4]{bookMusson}, every primitive ideal in $U(\fg)$ is in the set
$$\mathscr{X}\;=\;\{J(\lambda):=\Ann_{U}L(\lambda)\;|\quad\lambda\in\fh^\ast\},$$
for an arbitrary $\fb$. As the annihilator ideal of a module and the parity reversed module are identical, we can ignore parity. The main aim of this paper is to describe the partial order on $\mathscr{X}$ given by the inclusion order.

The poset $\mathscr{X}$ does not depend on the choice of $\fb$, but of course the labelling of ideals by $\lambda\in\fh^\ast$ does. If we consider several Borel algebras with all the same underlying $\fb_{\oa}$, then also the set $\{L(\lambda)\;|\;\lambda\in\fh^\ast\}$ does not depend on $\fb$, but still the labelling does.

\subsection{}\label{KLorder}
We define the {\em Kazhdan-Lusztig order} $\KL$ on~$\Lambda_0$, as in \cite[Definition~5.3]{CouMus}, to be the partial quasi-order transitively generated by setting $\nu\KL\lambda$ whenever~$\Ext^1_{\cO}(L(\lambda),L(\nu))\not=0$ and there is a simple reflection~$s\in W$ such that~$L(\lambda)$ is $s$-finite, whereas $L(\nu)$ is $s$-free.

We define the {\em completed Kazhdan-Lusztig order} $\KL^c$ on~$\fh^\ast$, as the partial quasi-order transitively generated by $\nu\KL^c\lambda$ when~$L(\lambda)$ is a subquotient of $T_sL(\nu)$ for some simple reflection~$s$.

It would be more correct to define both orders on the set of simple highest weight modules, rather then on their labelling set $\Lambda_0$ (or~$\fh^\ast$), see \ref{introId}. Both the $\Ext^1$-relation and the action of twisting functors are properties of $\cO(\fg,\fb_{\oa})$, so of the modules, and do not depend on~$\fb$. However we stick to the our convention, which is notationally more convenient. When talking about the (completed) KL order on~$\Lambda_0$, one hence silently assumes the choice of a Borel subalgebra. Our main result will be that $\KL^c$ describes the order on $\mathscr{X}$.

By \cite[Theorem~5.12]{CM1} we have~$\mu\KL \lambda \Rightarrow \mu\KL^c\lambda$ for $\mu,\lambda\in\Lambda_0$. We will prove in Section~\ref{secAS} that the two quasi-orders are identical if $\cO$ admits certain Kazhdan-Lusztig theories in the sense of~\cite{CPS}, which is the case for~$\fg=\mathfrak{gl}(m|n)$. For~$\fg$ a reductive Lie algebra, the (completed) KL order correspond to the left KL order of~\cite{KL}, see \cite[Theorem~5.1]{CouMus}.


\section{Some useful tools}\label{secTools}
In this section we fix an arbitrary~$\fg$ in \eqref{list} with some triangular decomposition.

\begin{lemma}\label{LemGor}
Set $\Phi: \cO(\fg,\fb_{\oa})^\Lambda_\chi\;\to\; \cO(\fg_{\oa},\fb_{\oa})^\Lambda_{\chi^0}$, the equivalence of Proposition \ref{propGor}. For $\lambda\in \Lambda_\chi$ dominant, there is $\lambda'\in \Lambda$ which is $\circ$-dominant such that for all $x\in W_\Lambda$:
$$\Phi(L(x\cdot\lambda))\cong L_0(x\circ \lambda'),\quad \Phi(\Delta(x\cdot\lambda))\cong \Delta_0(x\circ \lambda'),\quad \Phi(P(x\cdot\lambda))\cong P_0(x\circ \lambda').$$
\end{lemma}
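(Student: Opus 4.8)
The plan is to transport the highest weight structure through the equivalence $\Phi$ of Proposition~\ref{propGor} and check that the three families of modules correspond. First I would recall that $\Phi = (\res-)_{\chi^0}$ restricted to $\cO(\fg,\fb_{\oa})^\Lambda_\chi$, with quasi-inverse $(\ind-)_\chi$. Since $\res$ sends $\cO(\fg,\fb_{\oa})$ to $\cO(\fg_{\oa},\fb_{\oa})$ and preserves finite length, $\Phi$ is an exact equivalence carrying simples to simples, projectives to projectives (a quasi-inverse that is right exact suffices), and the highest weight order-preserving structure across. The key point is to identify the labels: on the super side the blocks $\cO_\chi^\Lambda$ are controlled by the $\rho$-shifted action $w\cdot\lambda$ of $W_\Lambda$ on a dominant $\lambda$, and the Verma module $\Delta(x\cdot\lambda)$ has highest weight $x\cdot\lambda$; on the even side $\cO(\fg_{\oa},\fb_{\oa})^\Lambda_{\chi^0}$ is a regular or singular block of the ordinary category $\cO$, controlled by the $\rho_{\oa}$-shifted action $w\circ\mu$, and there is a unique $\circ$-dominant weight $\lambda'$ in that block with $\chi^0 = \chi_{\lambda'}^0$.

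Next I would pin down $\lambda'$ by a highest-weight argument: $\Phi(\Delta(\lambda))$ is an object of $\cO(\fg_{\oa},\fb_{\oa})^\Lambda_{\chi^0}$ which, being $\res\Delta(\lambda) = \res\,U(\fg)\otimes_{U(\fb)}\mC_\lambda$, has a $\Delta_0$-flag (by the PBW decomposition $U(\fg) \cong U(\fg_{\ob}^-)\otimes U(\fg_{\oa})$ as a right $U(\fg_{\oa})$-module, this is just a direct sum of shifts of $\Delta_0(\lambda)$ by weights of $\Lambda^{\bullet}\fg_{\ob}^-$), whose unique dominant constituent after applying $(-)_{\chi^0}$ is $\Delta_0(\lambda')$ for the $\circ$-dominant $\lambda'$ in that block. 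Alternatively, and more cleanly, I would use that $\Phi$ is a highest weight equivalence for the two highest weight structures (standard objects $\Delta(\mu)$ resp. $\Delta_0(\nu)$), hence it matches standard objects with standard objects and induces an isomorphism of the underlying posets $\{x\cdot\lambda : x\in W_\Lambda\} \to \{y\circ\mu : y\in W_\Lambda\}$; since both posets are torsors under $W_\Lambda$ compatibly with the Bruhat order via $x\cdot\lambda \mapsto x\circ\lambda'$ (twisting functors commute with $\res$ and $\ind$ by~\eqref{eqTRI}, so the braid group actions are intertwined, forcing the labelings to match up to the single choice of base point $\lambda'$), the correspondence must be $x\cdot\lambda \leftrightarrow x\circ\lambda'$ for all $x\in W_\Lambda$. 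Once the statement is established for $L$, it follows for $\Delta$ and $P$ because $\Phi$ is an exact equivalence with right-exact quasi-inverse, so it sends the projective cover of $L(x\cdot\lambda)$ to the projective cover of $\Phi(L(x\cdot\lambda)) = L_0(x\circ\lambda')$, namely $P_0(x\circ\lambda')$, and similarly it sends Verma modules to Verma modules once we know it is a morphism of highest weight categories.

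The main obstacle is verifying that $\Phi$ really is an equivalence of highest weight categories and not merely of abelian categories — i.e. that it sends the super-Verma $\Delta(x\cdot\lambda)$ to the even-Verma $\Delta_0(x\circ\lambda')$ rather than to some other object with the same simple top. The cleanest route is to observe that $\Delta(\mu)$ is characterized inside $\cO^\Lambda_\chi$ as the projective cover of $L(\mu)$ in the Serre subcategory generated by $\{L(\nu) : \nu \le \mu\}$; since $\Phi$ is exact with exact inverse it preserves this subquotient-closed subcategory structure provided the order on simples is preserved, and the order on simples is read off from $\Ext^1$'s, which $\Phi$ preserves. So the genuine content is: the bijection on simples induced by $\Phi$ is compatible with the partial orders coming from $\le$ on $\fh^\ast$ on both sides when restricted to the relevant $W_\Lambda$-orbit — this I would check using that $\res$ is weight-exact (it does not change weights) together with linkage, reducing it to the classical statement that $\Phi$ matches the dominant projective $P(\lambda) = \Delta(\lambda)$ with $P_0(\lambda') = \Delta_0(\lambda')$, which is the content already invoked around \cite[Theorem~5.1(a)]{Serre} and Lemma~\ref{generate}.
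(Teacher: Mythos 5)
Your overall strategy matches the paper's: establish a base point $\Phi(\Delta(\lambda))\cong\Delta_0(\lambda')$ and then iterate over $W_\Lambda$ using the twisting-functor compatibilities $T_s\circ\ind\cong\ind\circ T_s^0$ and $T_s^0\circ\res\cong\res\circ T_s$ of equation~\eqref{eqTRI}. That iteration step is exactly what the paper does, and your ``cleaner'' route via intertwining the braid-group actions is morally the same argument.

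However, there is a genuine gap in your base case. The paper pins down $\Phi(\Delta(\lambda))\cong\Delta_0(\lambda')$ by combining two ingredients: (i) BGG reciprocity, which gives the existence and uniqueness of a projective Verma module $\Delta_0(\lambda')$ in $\cO^\Lambda_{\chi^0}$, and (ii) a result of Gorelik (3.5.1 and 2.7.2 of \cite{Gorelik}) which says the equivalence of Proposition~\ref{propGor} interchanges Verma modules with Verma modules. Combining these forces $\Phi(\Delta(\lambda))$, the unique projective Verma in $\cO^\Lambda_\chi$, to map to the unique projective Verma $\Delta_0(\lambda')$. You flag exactly this issue yourself --- ``verifying that $\Phi$ sends the super-Verma to the even-Verma rather than some other object with the same simple top'' --- but your proposed fix does not close it. Arguing that $\res\Delta(\lambda)$ has a $\Delta_0$-flag shows only that $\Phi(\Delta(\lambda))$ has a standard filtration, not that the filtration has length one; and your Serre-subcategory characterisation of $\Delta(\mu)$ presupposes knowing that the partial orders on simples match, which in turn you ``reduce to the classical statement that $\Phi$ matches $P(\lambda)=\Delta(\lambda)$ with $P_0(\lambda')=\Delta_0(\lambda')$.'' That reduction is circular: it is precisely the base case you are trying to prove. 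Worse, your appeal to Lemma~\ref{generate} is genuinely circular within the paper's logical order, since the proof of Lemma~\ref{generate}(2) explicitly invokes Lemma~\ref{LemGor}. The missing ingredient is the Gorelik citation establishing that the Penkov--Serganova--Gorelik equivalence interchanges Verma modules; once you have that and uniqueness of the projective Verma from BGG reciprocity, your iteration via~\eqref{eqTRI} and the passage to simples (taking tops) and projectives (taking covers) finishes the proof exactly as in the paper.
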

\begin{proof}
By the BGG Theorem, \cite[Theorem~5.1]{Humphreys} and BGG reciprocity \cite[Theorem~3.11]{Humphreys}, there is a unique Verma module $\Delta_0(\lambda')$ which is projective in $\cO^\Lambda_{\chi^0}$. The equivalence in Proposition \ref{propGor} interchanges Verma modules, see \cite[3.5.1 and 2.7.2]{Gorelik}. Hence there can only be one projective Verma module in $\cO(\fg_{\oa},\fb_{\oa})^\Lambda_{\chi^0}$ and, in particular, $\Delta(\lambda)\cong\Phi^{-1}(\Delta_0(\lambda'))$.
Equation \eqref{eqTRI} and \cite[Lemma~5.4]{CM1} for the typical case, then imply by iteration that
$$\Phi(\Delta(x\cdot\lambda))\cong \Delta_0(x\cdot\lambda')\qquad \mbox{for all }x\in W_\Lambda.$$
The corresponding statement for simple and projective modules follows from taking the top and the projective cover of the Verma modules.
\end{proof}

\begin{lemma}\label{generate}
Consider any $\Lambda\in\fh^\ast/\Lambda_0$. 
\begin{enumerate}
\item The set $\Lambda$ always contains super dominant weights.
\item Fix a super dominant indecomposable projective object $P$ in~$\cO^{\Lambda}$. Every projective module in~$\cO^\Lambda$ is a direct summand of a module $V\otimes P$ for some~$V\in \cF$.
\end{enumerate}
\end{lemma}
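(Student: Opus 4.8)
## Proof proposal for Lemma~\ref{generate}

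The plan is to prove part (1) first and then deduce part (2) using the typical-block equivalence of Proposition~\ref{propGor} to transfer the classical fact about $\cO(\fg_{\oa},\fb_{\oa})$ to the super setting.

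For part (1), I would start from Lemma~\ref{lemZar}: pick any $\mu\in\Lambda$, and produce $\kappa\in\Lambda_0$ with $L(\kappa)$ finite dimensional and $\mu+\kappa$ regular and strongly typical. Since $\kappa\in\Lambda_0$ we still have $\mu+\kappa\in\Lambda$. Now regularity and strong typicality are $W_\Lambda$-translation-invariant conditions in the sense that every weight in the $\cdot$-orbit $W_\Lambda\cdot(\mu+\kappa)$ is again regular and strongly typical; among these there is exactly one dominant weight $\lambda$ (dominant for the $\rho$-shifted $W_\Lambda$-action), and $\lambda$ is then simultaneously regular, strongly typical and dominant, i.e.\ super dominant, and lies in $\Lambda$. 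The only point needing a line of care is that strong typicality, $\langle\lambda+\rho,\gamma\rangle\neq0$ for $\gamma\in\Delta_{\ob}$, depends only on the central character $\chi_\lambda$ (by the cited results of Gorelik), hence is constant on $W_\Lambda$-orbits; regularity is clearly $W_\Lambda$-invariant since it refers to the even roots.

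For part (2), let $\Phi\colon \cO(\fg,\fb_{\oa})^\Lambda_\chi\xrightarrow{\ \sim\ }\cO(\fg_{\oa},\fb_{\oa})^\Lambda_{\chi^0}$ be the equivalence of Proposition~\ref{propGor}, where $\chi=\chi_\lambda$ is the (strongly typical) central character of the super dominant $P=P(\lambda)=\Delta(\lambda)$. By Lemma~\ref{LemGor}, $\Phi$ sends $P(\lambda)$ to a projective Verma module $\Delta_0(\lambda')=P_0(\lambda')$, and more generally sends indecomposable projectives $P(x\cdot\lambda)$ to $P_0(x\circ\lambda')$, so it matches up the full sets of indecomposable projectives in the two blocks. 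Now invoke the classical fact for reductive Lie algebras: in $\cO(\fg_{\oa},\fb_{\oa})$ every indecomposable projective is a summand of $V_0\otimes P_0(\lambda')$ for a suitable finite dimensional $\fg_{\oa}$-module $V_0$ (this is the defining property of the dominant projective in a category with full projective functors, or can be seen directly by translation functors out of and back to the wall starting from the antidominant-projective Verma). Transport this along $\Phi$: since $\Phi$ is an equivalence of the blocks, $P(x\cdot\lambda)$ is a summand of $\Phi^{-1}(V_0\otimes P_0(\lambda'))$. Finally I would use the compatibility $\Phi^{-1}(V_0\otimes -)\cong \Res$-to-$\Ind$ behaves like $(\ind V_0)\otimes -$ up to taking the $\chi$-block summand — more precisely, for $V\in\cF$ the functor $V\otimes-$ on $\cO$ corresponds under $\Phi$ to $(\res V)\otimes-$ followed by projection, using that $\res$ and $\ind$ intertwine with $(-)_{\chi^0}$, $(-)_\chi$ — to rewrite $\Phi^{-1}(V_0\otimes P_0(\lambda'))$ as a summand of $V\otimes P(\lambda)$ for a suitable $V\in\cF$ (e.g.\ $V$ containing $V_0=\res$ of itself in the relevant way; taking $V=\ind V_0$ works after passing to the $\chi$-summand, which only throws away summands). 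An arbitrary projective in $\cO^\Lambda$ is a direct sum of such indecomposables lying in possibly different blocks $\cO^\Lambda_{\chi'}$; but every block of $\cO^\Lambda$ with nonzero projectives contains a super dominant weight by part (1) giving an analogous statement, and one checks the super dominant projectives in different blocks of $\cO^\Lambda$ are all obtained from the single fixed $P$ by tensoring with an element of $\cF$ (translation between central characters within the same $\Lambda$), so one can arrange a single $V\in\cF$ working for a given projective by taking a direct sum of the $V$'s.

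The main obstacle is the bookkeeping in the last step: matching the tensor action of $\cF$ on $\cO$ with the tensor action of finite dimensional $\fg_{\oa}$-modules on $\cO(\fg_{\oa},\fb_{\oa})$ under $\Phi$, and making sure a single $V\in\cF$ can be chosen that simultaneously handles all the indecomposable summands (in all relevant blocks) of the given projective. The cleanest route is probably to cite \cite[Theorem~5.1(a)]{Serre} together with \cite[Definition~1.1]{PF}, as the text itself suggests in \ref{secO}: this is exactly the assertion that $(\cO^\Lambda, \{V\otimes-\}_{V\in\cF})$ is a category with full projective functors having $P$ as dominant object, from which (2) is the standard consequence that the dominant projective generates all projectives under the projective functors; the only thing to verify is the hypotheses of that framework, which follow from part (1) and Lemma~\ref{LemGor} via Proposition~\ref{propGor}.
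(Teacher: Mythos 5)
Your argument for part (1) is correct and matches the paper's. For part (2) there is a genuine gap. The equivalence $\Phi$ of Proposition~\ref{propGor} only applies block by block, and only for strongly typical blocks. Your transport argument therefore only produces the indecomposable projectives $P(x\cdot\lambda)$ living in the single strongly typical block of~$\lambda$. To bridge to the rest of $\cO^\Lambda$ you assert that ``every block of $\cO^\Lambda$ with nonzero projectives contains a super dominant weight,'' but this is false: super dominant weights are by definition strongly typical (and regular), so atypical blocks of $\cO^\Lambda$ contain none. The paper closes exactly this gap in the first paragraph of its proof, as a by-product of Lemma~\ref{lemZar}: for any $\mu\in\Lambda$ one produces a regular strongly typical $\nu\in\Lambda$ with $L(\nu-\mu)$ finite dimensional, and a lifting/adjunction argument then realises $P(\mu)$ as a summand of $L(\nu-\mu)^\ast\otimes P(\nu)$. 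This reduces the whole of part (2) to the case of strongly typical $\nu$, after which Proposition~\ref{propGor} is applicable. You need some version of this reduction before your $\Phi$-based argument can start.

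Two further remarks. First, even in the strongly typical case, the paper avoids the ``bookkeeping'' you worry about (matching $V\otimes-$ on $\cO$ with $V_0\otimes-$ on $\cO(\fg_{\oa},\fb_{\oa})$ under $\Phi$) by not pushing the tensor product through $\Phi$ at all: it tests whether $P(\nu)$ is a summand of $P(\lambda)\otimes\ind V_0$ by applying $\res$ and using $\res(P(\lambda)\otimes\ind V_0)\cong\res(P(\lambda))\otimes V_0\otimes\Lambda\fg_{\ob}$, observing that $\res P(\lambda)$ contains $P_0(\lambda')$ and $\Lambda\fg_{\ob}$ contains the trivial module as summands. This sidesteps any need to intertwine the two tensor actions. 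Second, your proposed ``cleanest route'' of citing \cite[Theorem~5.1(a)]{Serre} together with \cite[Definition~1.1]{PF} is circular: the paper explicitly states in Subsection~2.3 that the ``category with full projective functors'' structure on $\cO^\Lambda$ ``can be proved as in \cite[Theorem~5.1(a)]{Serre} by applying the subsequent Lemma~\ref{generate}(2),'' i.e.\ that result is a consequence of the very lemma you are trying to prove, not an input to it.
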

\begin{proof}
Lemma~\ref{lemZar} implies that, for any $\mu\in\Lambda$, there exist a regular strongly typical $\nu\in\Lambda$ such that $L(\nu-\mu)$ is finite dimensional and 
such that the $P(\mu)$ is a direct summand of~$L\otimes P(\nu)$ for $L=L(\nu-\mu)^\ast\in\cF$. In particular, this already implies that any $\Lambda$ contains regular strongly typical weights. A super dominant one can then be obtained from the Weyl group action, proving part~(1).

To prove part~(2), by the above paragraph, it suffices to prove that for every strongly typical $\nu\in\Lambda$, the module $P(\nu)$ is a direct summand of~$V\otimes P(\lambda)$ for some~$V\in \cF$ and an arbitrary but fixed super dominant $\lambda$. For a specific $P(\nu)$, we use the equivalence of Proposition~\ref{propGor} (using Lemma~\ref{LemGor})  which maps $P(\nu)$ to some~$P_0(\nu')$. Similarly, we denote the image of~$P(\lambda)$ in~$\cO(\fg_{\oa},\fb_{\oa})_{\chi^0_\lambda}$ by $P_0(\lambda')$ for $\lambda'$ regular dominant for $\fg_{\oa}$.  By \cite[Theorem~3.3]{BG} there is a finite dimensional~$\fg_{\oa}$-module~$V_0$ such that~$P_0(\nu')$ is a direct summand of~$P_0(\lambda')\otimes V_0$ with $\lambda'$ regular dominant for $\fg_{\oa}$. We claim that~$P(\nu)$ is a direct summand of~$P(\lambda)\otimes \ind V_0$. Indeed, by Proposition~\ref{propGor}, the latter is equivalent to whether~$P_0(\nu')$ is a direct summand of~$\res(P(\lambda)\otimes \ind V_0)\cong \res (P(\lambda))\otimes V_0\otimes \Lambda\fg_{\ob}$. By Proposition~\ref{propGor}, $\res (P(\lambda))$ contains $P_0(\lambda')$ as a direct summand, whereas the trivial module is a direct summand of~$\Lambda\fg_{\ob}$, so $P(\nu)$ is indeed a direct summand of the tensor product. 
\end{proof}

\subsection{}
By a projective functor on (a subcategory) of~$\Zf$ we mean a direct summand of a functor $( V\otimes -)$ with~$V\in \cF$. From Proposition~\ref{propGor} we can easily obtain the following generalisation of the celebrated result of Bern\v{s}tein and Gel'fand in \cite[Theorem~3.3]{BG}. 
\begin{prop}\label{BGclass}
Fix a regular strongly typical central character~$\chi$ and consider the set $\{\lambda_1,\lambda_2,\cdots,\lambda_d\}$ of all dominant $\lambda$ with~$\chi_\lambda=\chi$. We also set $\Lambda_i=\Lambda_{\lambda_i}$.

For every $i\in\{1,\cdots,d\}$ and~$x\in W_{\Lambda_i}$ there is an exact indecomposable endofunctor $\theta^i_x$ of~$\fg\mbox{-mod}_\chi$, which preserves $\cO^{\Lambda_i}_{\chi}$, and for which $\theta^i_x\Delta(\lambda_i)\cong P(x\cdot\lambda_i)$ and~$\theta^i_x\Delta(\lambda_j)=0$ for $i\not=j$. The indecomposable projective functors on~$\fg\mbox{-mod}_\chi$ are precisely given by these functors. Furthermore, every functor $\theta^i_x$ is direct summand of a functor $\theta^i_{s_1}\theta^i_{s_2}\cdots\theta^i_{s_k}$ for some reflections $s_1,\cdots,s_k$ which are simple in~$W_{\Lambda_i}$.
\end{prop}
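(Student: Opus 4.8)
The plan is to deduce Proposition~\ref{BGclass} from the classical classification of projective functors for reductive Lie algebras, \cite[Theorem~3.3]{BG}, by transporting it across the Gorelik equivalence of Proposition~\ref{propGor}. First I would fix the regular strongly typical central character $\chi$ and, via Proposition~\ref{propGor}, identify $\fg\mbox{-mod}_\chi$ with $\fg_{\oa}\mbox{-mod}_{\chi^0}$ through the mutually inverse functors $\Phi=(\res-)_{\chi^0}$ and $\Phi^{-1}=(\ind-)_{\chi}$. Under this equivalence each block $\cO^{\Lambda_i}_\chi$ corresponds to $\cO(\fg_{\oa},\fb_{\oa})^{\Lambda_i}_{\chi^0_i}$ for a suitable $\chi^0_i$, and by Lemma~\ref{LemGor} the dominant Verma $\Delta(\lambda_i)$ goes to the projective Verma $\Delta_0(\lambda_i')$ with $\lambda_i'$ $\circ$-dominant. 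On the $\fg_{\oa}$-side, \cite[Theorem~3.3]{BG} supplies, for each $x\in W_{\Lambda_i}$, an indecomposable projective endofunctor $\vartheta^i_x$ of $\fg_{\oa}\mbox{-mod}_{\chi^0}$ with $\vartheta^i_x\Delta_0(\lambda_i')\cong P_0(x\circ\lambda_i')$ and $\vartheta^i_x\Delta_0(\lambda_j')=0$ for $i\neq j$, and these exhaust the indecomposable projective functors; moreover each $\vartheta^i_x$ is a summand of a composition of wall-crossing functors $\vartheta^i_{s_1}\cdots\vartheta^i_{s_k}$.

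Next I would \emph{define} $\theta^i_x:=\Phi^{-1}\circ\vartheta^i_x\circ\Phi$. This is manifestly an exact indecomposable endofunctor of $\fg\mbox{-mod}_\chi$ preserving $\cO^{\Lambda_i}_\chi$. To see it is a projective functor in the sense of the paper, I would check that conjugating $(V_0\otimes-)$ on the $\fg_{\oa}$-side produces a genuine tensoring functor $(V\otimes-)$ on the $\fg$-side up to summands: indeed $\Phi^{-1}(V_0\otimes\Phi(M))$ relates to $V\otimes M$ by the computation already carried out in the proof of Lemma~\ref{generate}, using $\res(M\otimes\ind V_0)\cong\res(M)\otimes V_0\otimes\Lambda\fg_{\ob}$ and the fact that the trivial module is a summand of $\Lambda\fg_{\ob}$; one takes $V$ to be the $\fg$-module $\ind V_0$ (or the appropriate summand of it), so $\theta^i_x$ is a summand of $(\ind V_0\otimes-)$. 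The action on Verma modules is then immediate from Lemma~\ref{LemGor}: $\theta^i_x\Delta(\lambda_i)=\Phi^{-1}\vartheta^i_x\Delta_0(\lambda_i')\cong\Phi^{-1}P_0(x\circ\lambda_i')\cong P(x\cdot\lambda_i)$, and $\theta^i_x\Delta(\lambda_j)=\Phi^{-1}\vartheta^i_x\Delta_0(\lambda_j')=0$ for $j\neq i$.

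For the exhaustion statement I would argue that any indecomposable projective functor $\theta$ on $\fg\mbox{-mod}_\chi$ yields, by conjugation $\Phi\circ\theta\circ\Phi^{-1}$, an exact endofunctor of $\fg_{\oa}\mbox{-mod}_{\chi^0}$ which — again via the $\res/\ind$ tensor-identity above — is a projective functor there, hence one of the $\vartheta^i_x$ by \cite{BG}; conjugating back identifies $\theta$ with some $\theta^i_x$. Finally, the simple-reflection decomposition transports directly: applying $\Phi^{-1}(-)\Phi$ to $\vartheta^i_x\mid\vartheta^i_{s_1}\cdots\vartheta^i_{s_k}$ gives $\theta^i_x\mid\theta^i_{s_1}\cdots\theta^i_{s_k}$, and each $s_j$ is simple in $W_{\Lambda_i}$ by the choice in \cite{BG}. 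The main obstacle I anticipate is the bookkeeping in the conjugation-versus-tensoring comparison — verifying that conjugating $(V_0\otimes-)$ by $\Phi^{\pm1}$ really lands among summands of $(V\otimes-)$ for $V\in\cF$, i.e. that the extra factor $\Lambda\fg_{\ob}$ and the passage between $\fg$- and $\fg_{\oa}$-modules do not destroy indecomposability or introduce spurious functors; this is exactly the point handled for projective \emph{modules} in Lemma~\ref{generate}(2), and here it must be upgraded to a statement about functors, but the argument is the same.
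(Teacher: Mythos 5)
Your proposal follows essentially the same route as the paper: conjugate the Bern\v{s}tein--Gel'fand indecomposable projective functors by the Gorelik equivalence of Proposition~\ref{propGor}, use Lemma~\ref{LemGor} to identify the action on (dominant) Verma modules, and show by the tensor identity that conjugation preserves the class of projective functors in both directions. The only small divergence is in the step verifying that each $\theta^i_x$ is in fact a projective functor on the $\fg$-side: you sketch a direct tensor-identity argument (that $\Phi^{-1}\circ(V_0\otimes-)\circ\Phi$ is a natural direct summand of $\ind V_0\otimes-$), while the paper instead argues by contradiction using Lemma~\ref{generate}(2); both are sound, and yours is arguably the cleaner of the two.
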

\begin{proof}We use the equivalences in Proposition~\ref{propGor} and define $\lambda^0_i\in \fh^\ast$ by
$$L_0(\lambda_i^0)\;\cong\;\res(L(\lambda_i))_{\chi^0}.$$

Consider a projective functor $F$ on~$\fg\mbox{-mod}_\chi$. This must be a direct summand of the functor $F_V:=V\otimes -$ on $\Zf$ for some $V\in \cF$.  Now define
$$F':=\left(\res\circ F\circ (\ind -)_{\chi}\right)_{\chi^0}:\fg_{\oa}\mbox{-mod}_{\chi^0}\to\fg_{\oa}\mbox{-mod}_{\chi^0}.$$
The functor $F'$ is clearly a direct summand of the functor 
$$\res\circ F_V\circ \ind\;\cong\; (\Lambda\fg_{\ob}\otimes \res V)\otimes -$$
and hence $F'$ is a projective functor on $\fg_{\oa}\mbox{-mod}_{\chi^0}$. Consequently, we find that every projective functor $F$ on~$\fg\mbox{-mod}_\chi$ is of the form 
$$F=\left(\ind \circ F'\circ \res(-)_{\chi^0}\right)_{\chi}$$
for some projective functor $F'$ on~$\fg_{\oa}\mbox{-mod}_{\chi^0}$.

The classification of indecomposable projective functors on~$\fg_{\oa}\mbox{-mod}_{\chi^0}$ from~\cite[Theorem~3.3(ii)]{BG}, see also \cite[Theorem~10.8]{Humphreys}, then allows us to introduce the endofunctors $\theta^i_x$ of~$\fg\mbox{-mod}_\chi$ as
$$\theta^i_x\;:=\;\left(\ind \circ F_{\xi}\circ \res(-)_{\chi^0}\right)_{\chi}\quad\mbox{for}\quad \xi=(x\circ\lambda_i^0,\lambda_i^0),$$
where we used the notation introduced in \cite[Section 3.3]{BG}. The fact that any $\theta_{x}^{i}$ is a direct summand of a composition as $\theta^i_{s_1}\theta^i_{s_2}\cdots\theta^i_{s_k}$, follows by the above construction from the corresponding claim for Lie algebras, see e.g. \cite[Corollary~5.6]{AlgCat}. One could for instance take $s_1s_2\cdots s_k$ to be a (reduced) expression for $x$ inside $W_{\Lambda_i}$.

Also the expression for $\theta_x^i\Delta(\lambda_j)$ is inherited by the corresponding properties (a) and (b) of $F_\xi$ in \cite[Theorem 3.3(ii)]{BG}, by construction and Lemma~\ref{LemGor}.

It remains to prove that every $\theta^i_x$ is indeed a projective functor on~$\fg\mbox{-mod}_\chi$. But if some~$\theta^i_x$ would not be a projective functor, then tensoring $\Delta(\lambda_i)$ with finite dimensional modules would not generate every projective module in $\cO^{\Lambda_i}$, yielding a contradiction with Lemma~\ref{generate}(2).
\end{proof}

\begin{prop}\label{propcompare}
Consider two functors $F_1,F_2$ on~$\cO(\fg,\fb)$, which 
\begin{enumerate}
\item both functorially commute with projective functors, up to isomorphism;
\item are both left (resp. right) exact;
\item are isomorphic when restricted to strongly typical regular blocks;
\end{enumerate}
then $F_1\cong F_2$.
\end{prop}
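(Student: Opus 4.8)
The plan is to reduce the comparison of $F_1$ and $F_2$ on an arbitrary block $\cO^\Lambda_\chi$ to the strongly typical regular case, where they are isomorphic by hypothesis~(3), using the fact (Lemma~\ref{generate}) that every projective in $\cO^\Lambda$ is a summand of $V\otimes P$ for a fixed super dominant projective $P$ and $V\in\cF$, together with the fact that super dominant blocks are strongly typical regular. First I would fix a super dominant weight $\lambda\in\Lambda$ (Lemma~\ref{generate}(1)); then $P=P(\lambda)=\Delta(\lambda)$ lies in the strongly typical regular block $\cO^\Lambda_{\chi_\lambda}$, so hypothesis~(3) gives an isomorphism $\eta_\lambda\colon F_1P\xrightarrow{\ \sim\ }F_2P$. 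The goal is to propagate this single isomorphism to all of $\cO^\Lambda_\chi$.

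The key step is to build a natural isomorphism on the full additive subcategory of projectives. For a projective functor $\theta=(V\otimes-)$ (or a summand thereof), condition~(1) gives natural isomorphisms $F_i\circ\theta\cong\theta\circ F_i$; combining these with $\eta_\lambda$ produces isomorphisms $F_1(\theta P)\cong\theta(F_1P)\xrightarrow{\theta(\eta_\lambda)}\theta(F_2P)\cong F_2(\theta P)$. Since by Lemma~\ref{generate}(2) every indecomposable projective $Q$ in $\cO^\Lambda$ is a summand of some $\theta P$, and since $F_i$ send summands to summands, one obtains an isomorphism $F_1Q\cong F_2Q$ for every projective $Q$. The slightly delicate point is \emph{naturality}: one must check that the isomorphisms $F_1(\theta P)\cong F_2(\theta P)$ assemble into a natural transformation of functors restricted to the category of projectives, i.e. are compatible with all morphisms between such objects, not merely with the idempotents picking out summands. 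This is where I expect the main obstacle to lie, and I would handle it by working with the additive closure of $\{P\}$ under projective functors: morphisms there are generated by morphisms of projective functors $\theta\Rightarrow\theta'$ together with the fixed map $\eta_\lambda$, and one invokes the interchange/compatibility of the isomorphisms in~(1) with natural transformations of projective functors (essentially a Yoneda/coherence argument, or one can cite that projective functors form a category whose action is determined by its restriction to the dominant object, cf. the cited \cite{PF, BG}). A clean way to phrase it: the functor $\theta\mapsto F_iP$ evaluated along projective functors, together with~(1), shows $F_i|_{\mathrm{proj}}$ is, up to canonical isomorphism, $\theta\mapsto\theta(F_iP)$; since $F_1P\cong F_2P$ naturally via $\eta_\lambda$, the two functors agree on projectives.

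Finally I would upgrade from projectives to all of $\cO^\Lambda_\chi$ using left-exactness, hypothesis~(2). For any $M\in\cO^\Lambda_\chi$ choose a presentation by projectives $P_1\to P_0\to M\to 0$ (if $F_1,F_2$ are right exact) or dually an injective-free... rather, for the left exact case, embed $M$ into a finite coresolution... more simply: if $F_1,F_2$ are right exact, apply them to $P_1\to P_0\to M\to 0$, get exact sequences $F_iP_1\to F_iP_0\to F_iM\to 0$, and use the already-constructed natural isomorphism on $P_0,P_1$ (which is natural, hence commutes with the map $P_1\to P_0$) to obtain, by passing to cokernels, an isomorphism $F_1M\cong F_2M$ natural in $M$. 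In the left-exact case one runs the dual argument using that $\cO^\Lambda_\chi$ has enough injectives (apply the simple-preserving duality $\dd$, or resolve by $\dd$ of projectives) and $0\to F_iM\to F_iI^0\to F_iI^1$. Either way the naturality established on the (co)generating class of (co)projectives, combined with the five lemma applied to the kernel/cokernel comparison, yields $F_1\cong F_2$ as functors on $\cO(\fg,\fb)=\bigoplus_{\Lambda,\chi}\cO^\Lambda_\chi$, checking block by block. The crux remains the naturality bookkeeping in the middle paragraph; the outer reductions are routine homological algebra.
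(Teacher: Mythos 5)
Your proposal is essentially the same proof as the paper's: fix a super dominant projective $P$ in an arbitrary $\cO^\Lambda$, use hypothesis~(3) together with Lemma~\ref{generate}(1) to identify $F_1$ and $F_2$ on $P$, propagate to all projectives via hypothesis~(1) and Lemma~\ref{generate}(2), and then extend to the whole category via one-sided exactness. The paper's proof is considerably terser and does not spell out the naturality check you flag in your middle paragraph, but that concern is genuine and is handled the same way you indicate: via the structure of $\cO^\Lambda$ as a category with full projective functors with dominant object $P(\lambda)$ (which the paper explicitly invokes in Section~2, citing \cite{PF}), so morphisms between projectives are controlled by projective functors and their natural transformations together with endomorphisms of the dominant object.
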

\begin{proof}
We consider~$F_i$ to be right exact, the proof for left exact functors follows from replacing projective modules by injective modules below; or simply by composing with the exact contravariant functor $\dd$. We fix an arbitrary $\cO^\Lambda$ and consider the restriction of $F_i$ as functors $\cO^\Lambda\to \cO$. By condition (3) and Lemma~\ref{generate}(1), $F_1$ and~$F_2$ are isomorphic when restricted to the full subcategory of~$\cO^\Lambda$ with one object, up to isomorphism, given by some super dominant projective module in~$\cO^\Lambda$. All projective modules in~$\cO$ are direct summands of tensor products of this module, by Lemma~\ref{generate}(2). Property (1) then implies that~$F_1$ and~$F_2$ are isomorphic when restricted to the full subcategory of projective modules. The claim hence follows from property (2).
\end{proof}

\subsection{}\label{LLL} From the PBW theorem it follows that a module for $U=U(\fg)$ is locally finite if and only if its restriction to $U_{\oa}=U(\fg_{\oa}) $ is locally finite. This justifies using the same notation $\cL(-,-)$, {\it cfr.} \ref{introHC}, for the related functors
$$U_{\oa} \mbox{-mod}\,\times\, U\mbox{-mod}\;\to\; U\mbox{-mod-}U_{\oa} \quad\mbox{and }\quad U_{\oa} \mbox{-mod}\,\times\, U_{\oa} \mbox{-mod}\;\to\; U_{\oa} \mbox{-mod-}U_{\oa} .$$
Both functors, as well as the original one in \ref{introHC}, can be defined as taking the maximal sub-vectorspace for which the adjoint $U_{\oa} $-action is locally finite. This space then naturally has the appropriate bimodule structure. We also introduce the induction functor
$$\ind^r=-\otimes_{U_{\oa}}U:\quad A\mbox{\rm-mod-}U_{\oa} \to A\mbox{-mod-}U,$$
where $A$ is either $U$ or $U_{\oa}$, and its two-sided adjoint $\res^r$. Similarly we define $\ind^l$ and $\res^l$ for the left action on bimodules.
\begin{lemma}\label{LemLLL}
For the functors $\cL$ as introduced above we have isomorphisms
\begin{enumerate}
\item $\cL(\ind -,-)\;\cong\; \ind^r \circ \cL(-,-)$ as functors $U_{\oa} \mbox{\rm-mod}\,\times\, U\mbox{\rm-mod}\;\to\; U\mbox{\rm-mod-}U$;
\item $\cL(\res -,-)\;\cong\; \res^r \circ \cL(-,-)$ as functors $U\mbox{\rm-mod}\,\times\, U_{\oa}\mbox{\rm-mod}\;\to\; U_{\oa}\mbox{\rm-mod-}U_{\oa}$;
\item $\left(\cL(\ind-,-)\right)^{\ad}\;\cong \;\ind \circ \left(\cL(-,\res-)\right)^{\ad}$ as functors $U_{\oa} \mbox{\rm-mod}\,\times\, U\mbox{\rm-mod}\;\to\; U\mbox{\rm-Mod}$.
\end{enumerate}

\end{lemma}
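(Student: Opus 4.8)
The plan is to establish all three isomorphisms by a "free module / adjunction" argument, reducing each $\cL$-functor to the more primitive $\Hom_{\mC}$-functor where the statements become instances of standard tensor-identity (projection formula) isomorphisms for the pair $(U_{\oa},U)$. The key structural input is the PBW decomposition $U\cong U_{\oa}\otimes \Lambda\fg_{\ob}$ as a left (and right) $U_{\oa}$-module together with the fact, recalled in~\ref{LLL}, that local finiteness of the adjoint action is detected on $\fg_{\oa}$; the latter is exactly what makes $\cL$ commute with the restriction/induction functors $\res^r,\ind^r$ on bimodules in the first place. Throughout I would use that $\ind=U\otimes_{U_{\oa}}-$ and $\res$ are biadjoint (stated in~\ref{Prel111}), and likewise that $\ind^r,\res^r$ are biadjoint on the bimodule categories.

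For part~(1): the natural candidate map comes from the canonical $U$-bilinear pairing. Concretely, for $M\in U_{\oa}\text{-mod}$ and $N\in U\text{-mod}$, on the level of $\Hom_{\mC}$ one has $\Hom_{\mC}(\ind M,N)\cong \Hom_{\mC}(M,\res N)$ by $\ind$--$\res$ adjunction over $\mC$, but more to the point one wants a bimodule map $\cL(M,\res N)\otimes_{U_{\oa}}U\to \cL(\ind M,N)$ exhibiting the right-hand side as the $\ind^r$ of the left. First I would write down the obvious $\UU$-morphism $\Hom_{\mC}(M,N)\to \Hom_{\mC}(\ind M,N)$, $\alpha\mapsto(u\otimes m\mapsto u\alpha(m))$, observe it is a morphism of $(U,U_{\oa})$-bimodules from $\Hom_{\mC}(M,N)$ with its right $U_{\oa}$-structure, hence by the universal property of $-\otimes_{U_{\oa}}U$ it factors through $\Hom_{\mC}(M,N)\otimes_{U_{\oa}}U$; using that $\ind M$ is free over $U_{\oa}$ via PBW one checks this induced map is an isomorphism onto $\Hom_{\mC}(\ind M,N)$ (an explicit inverse reads off the component of $\alpha$ on $1\otimes M$ and the $\Lambda\fg_{\ob}$-basis). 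One then restricts to the subspaces where the adjoint $U_{\oa}$-action is locally finite: since $\ind^r$ is exact and preserves the "maximal adjoint-locally-finite subobject" description (because being locally finite over $U_{\oa}$ is preserved and reflected along $-\otimes_{U_{\oa}}U$, by PBW finiteness of $U$ over $U_{\oa}$), the isomorphism descends to $\cL(\ind M,\res\text{ being unnecessary here }N)=\cL(\ind M,N)$, i.e. $\cL(\ind-,-)\cong\ind^r\circ\cL(-,-)$. Functoriality and $\UU$-linearity of every map involved is routine to check and I would not belabour it.

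Part~(2) is the adjoint statement and I would obtain it formally from~(1) by the biadjunction $(\ind^r,\res^r)$ together with $(\ind,\res)$, or directly by the mirror-image computation: $\Hom_{\mC}(\res M,N)\cong\Hom_{\mC}(M,N)\otimes$(nothing), rather one uses $M\cong \res M\otimes_{U_{\oa}}U$-type identities on the right variable; the cleanest route is to apply~(1) with the roles of left and right swapped after conjugating by the antiautomorphism $t$ of~\ref{Prel111}, which interchanges left and right module structures and, by~\ref{introHC}, is compatible with the $\cL$-construction. Part~(3) then combines~(1) with the tensor identity $\res^r\circ\ind^r\cong (-\otimes\Lambda\fg_{\ob})$ at the level of the underlying adjoint module: applying $(-)^{\ad}$ to~(1) gives $(\cL(\ind M,N))^{\ad}\cong(\ind^r\cL(M,N))^{\ad}$, and then one identifies $(\ind^r\,B)^{\ad}$ with $\ind\bigl(B^{\ad}\bigr)$ for any Harish-Chandra-type bimodule $B$ — this is again a PBW computation, $\ind^r B=B\otimes_{U_{\oa}}U$ with the total adjoint $U$-action being induced from the adjoint $U_{\oa}$-action on $B$ — followed by~(2) to rewrite $\cL(M,\res-)$; chasing the identifications yields $(\cL(\ind-,-))^{\ad}\cong\ind\circ(\cL(-,\res-))^{\ad}$.

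The main obstacle is not conceptual but bookkeeping: one must make sure that "taking the maximal adjoint-locally-finite subspace" genuinely commutes with $\ind^r$ and $\res^r$, i.e. that the explicit $\Hom_{\mC}$-level isomorphisms restrict correctly. The crux is the claim used in~\ref{LLL} that a $U$-module is locally finite iff its restriction to $U_{\oa}$ is — applied now to the adjoint action on bimodules — which controls part~(2), and dually the finiteness of $U$ as a $U_{\oa}$-module (PBW), which controls that $\ind^r$ of an adjoint-locally-finite bimodule stays adjoint-locally-finite, controlling parts~(1) and~(3). Once these two finiteness facts are in place, every arrow in the argument is canonical and the verifications are mechanical.
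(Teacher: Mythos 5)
Your strategy follows the paper's in outline (descend to the $\Hom_{\mC}$ level, invoke coinduction $\cong$ induction and the equivalence of $U$- vs.\ $U_{\oa}$-local finiteness, reduce~(3) to a projection formula), but the key steps are not executed correctly. In part~(1) the map $\alpha\mapsto(u\otimes m\mapsto u\alpha(m))$ is not well-defined on $\ind M=U\otimes_{U_{\oa}}M$: the relation $ua\otimes m=u\otimes am$ for $a\in U_{\oa}$ would force $ua\,\alpha(m)=u\,\alpha(am)$, i.e.\ $U_{\oa}$-linearity of $\alpha$, which you do not have. The subsequent ``factor through $\Hom_{\mC}(M,N)\otimes_{U_{\oa}}U$ and read off an inverse from the PBW basis'' therefore never gets off the ground. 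The honest isomorphism $\Hom_{\mC}(\ind M, N)\cong\ind^r\Hom_{\mC}(M,N)$ is the identification of right coinduction $\Hom_{U_{\oa}}(U,-)$ with $-\otimes_{U_{\oa}}U$, which is exactly the coinduction-equals-induction input (resting on self-duality of $\Lambda\fg_{\ob}$) and is not visible from the PBW basis alone. Your proposed derivation of~(2) by conjugating~(1) with $t$ also does not work: $t$ interchanges left and right structures, so it would produce the $\ind^l$-analogue of~(1), not~(2). In fact~(2) is the trivial part, since $\Hom_{\mC}(\res M,N)$ and $\res^r\Hom_{\mC}(M,N)$ are the same $(U_{\oa},U_{\oa})$-bimodule on the nose, so only the local-finiteness equivalence from~\ref{LLL} is needed.

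In part~(3) you correctly reduce to $(\ind^r B)^{\ad}\cong\ind(B^{\ad})$, but dismissing this as ``again a PBW computation'' is where the real work is hidden. One must match two different $\fg$-module structures on the same underlying space $B\otimes\Lambda\fg_{\ob}$; the odd generators act differently on the two sides, and the agreement is precisely the content of the tensor identity. The paper does not attempt an elementwise computation: it tests both sides against $\Hom_U(M,-)$ for $M\in\cF$, rewrites each side via equation~\eqref{EU} and Frobenius reciprocity until both become $\Hom_{U-U_{\oa}}({}_{\times}M\otimes U, B)$ (using the classical tensor identity $U\otimes_{U_{\oa}}({}_{\times}M\otimes U_{\oa})\cong{}_{\times}M\otimes U$ of Knapp), and then pins down the resulting natural isomorphism by tracing $1\otimes x$. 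Your sketch should be replaced either by this Yoneda argument or by an explicit verification of $\fg_{\ob}$-equivariance, neither of which the PBW decomposition alone supplies.
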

\begin{proof}
The first two properties follow immediately from the first observation in \ref{LLL} and the fact that induction and coinduction from $U_{\oa}$ to $U$ coincide, see \ref{Prel111}. By (1) and the analogue of (2) for $\res^l$, property (3) reduces to
$$\left(\ind^r-\right)^{\ad}\;\cong\; \ind\circ \left(\res^l-\right)^{\ad}$$
as functors from the category of $U\times U_{\oa}$-bimodules with locally finite adjoint action, to the category $U\mbox{\rm-Mod}$.

This is just a slightly alternative version of what is sometimes referred to as the ``tensor identity'' or ``projection formula''. To keep the formulae comprehensive we omit all restriction functors in the rest of the proof. We will construct, for any ad-finite $X\in U\mbox{-mod-}U_{\oa}$, a (unique) isomorphism
$$\eta: \;\, U\otimes_{U_{\oa}}(X)^{\ad}\,\,\to\,\,(X\otimes_{U_{\oa}}U)^{\ad} \qquad\mbox{with}\quad \eta(1\otimes x)= x\otimes 1\quad\mbox{for all $x\in X$,}$$
which is natural with respect to $X$. Consider therefore an arbitrary $M\in U$-mod, from the locally finite adjoint action it will actually follow that it suffices to consider $M$ to be finite dimensional. Then by using equation \eqref{EU} and applying Frobenius reciprocity twice,
$$\Hom_U(M,U\otimes_{U_{\oa}}(X)^{\ad})\;\cong\;\Hom_{U_{\oa}-U_{\oa}}({}_{\times}M\otimes U_{\oa},X)\;\cong\; \Hom_{U-U_{\oa}}(U_{\otimes U_{\oa}}({}_{\times}M\otimes U_{\oa}),X).$$
On the other hand we calculate similarly
$$\Hom_U(M, (X\otimes_{U_{\oa}}U)^{\ad})\;\cong\;\Hom_{\UU}({}_{\times}M\otimes U, X\otimes_{U_{\oa}}U)\;\cong\;\Hom_{U-U_{\oa}}({}_{\times}M\otimes U, X).$$
Now it follows from the ordinary tensor identity \cite[Proposition 6.5]{Knapp} that there is a unique isomorphism
$$U_{\otimes U_{\oa}}({}_{\times}M\otimes U_{\oa})\cong ({}_{\times}M\otimes \left(U\otimes_{U_{\oa}}U_{\oa}\right))\cong {}_{\times}M\otimes U$$
as $U\times U_{\oa}$-bimodules. By the above this induces a unique isomorphism $U\otimes_{U_{\oa}}(X)^{\ad}\,\to\,(X\otimes_{U_{\oa}}U)^{\ad}$. Tracing what happens to $1\otimes x$ is done as in the second part of the proof of \cite[Proposition 6.5]{Knapp}.
\end{proof}


\section{Harish-Chandra bimodules}\label{secHC}
In this section we fix a~$\fg$ in \eqref{list} and an arbitrary~$\Lambda\in\fh^\ast/P_0$. We choose some super dominant~$\lambda\in\Lambda$, which exists by Lemma~\ref{generate}(1), and set $\chi=\chi_\lambda$. We will study $\cH^1_\chi$, where it will be essential that $\chi$ is thus regular and strongly typical.

 The following theorem is essentially a special case of~\cite[Theorem~5.1(c)]{Serre}. 
Most of the proof uses rather standard arguments, see e.g. \cite[Section 5]{BG}, \cite[Section 10]{Joseph3}, \cite[Theorem~3.1]{MS} or \cite[Kapitel~6]{Jantzen}.

\begin{thm}[V. Mazorchuk, V. Miemietz]\label{thmMM}
We have an equivalence of categories
$\cO^{\Lambda}\cong \cH^1_{\chi}$. The mutually inverse functors are given by $\cL(\Delta(\lambda),-)$ and~$-\otimes_{ U}\Delta(\lambda).$
\end{thm}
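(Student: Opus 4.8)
The plan is to prove this by reduction to the Lie algebra case via Gorelik's equivalence (Proposition~\ref{propGor}) combined with the classical theorem of Bern\v{s}tein--Gel'fand (whose super analogue is Proposition~\ref{BGclass} here). The two functors $F = \cL(\Delta(\lambda),-) \colon \cO^\Lambda \to \cH^1_\chi$ and $G = - \otimes_U \Delta(\lambda) \colon \cH^1_\chi \to \cO^\Lambda$ are candidates for a mutually inverse pair; I would establish the adjunction $(G,F)$ first, then show unit and counit are isomorphisms.

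\textbf{Step 1: Adjunction and well-definedness.} First I would check that $F$ lands in $\cH^1_\chi$: for $M \in \cO^\Lambda$, the bimodule $\cL(\Delta(\lambda),M)$ is Harish-Chandra by construction (\ref{introHC}), is killed by a power of $\fm_\chi$ on the right since $\Delta(\lambda)$ has central character $\chi$ (here super dominance forces $\Delta(\lambda) = P(\lambda)$, and one should check the right action is in fact killed by $\fm_\chi$ itself, using that $\lambda$ is dominant so $\Delta(\lambda)$ is a projective Verma — the argument of \cite[Section~5]{BG} or \cite[Kapitel~6]{Jantzen} applies), and has weight spaces in $\Lambda$ under the adjoint action. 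Likewise $G$ sends $\cH^1_\chi$ into $\cO^\Lambda$: $X \otimes_U \Delta(\lambda)$ is finitely generated, and since $X$ is locally $\fg_{\oa}$-finite under the adjoint action and $\Delta(\lambda) \in \cO$, one gets an object of $\cO$ with the correct weight support. The adjunction $\Hom_{\cO}(G X, M) \cong \Hom_{\cH}(X, F M)$ should come from tensor-hom adjunction together with the defining universal property of $\cL(-,-)$ as the maximal Harish-Chandra subbimodule, combined with \eqref{invariants}.

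\textbf{Step 2: Reduction to the Lie algebra case.} This is where I expect the real work to be, but also where the machinery already set up does the heavy lifting. By Proposition~\ref{propGor} and Lemma~\ref{LemGor}, $\cO^\Lambda_\chi \cong \cO(\fg_{\oa},\fb_{\oa})^\Lambda_{\chi^0}$, with $\Delta(\lambda)$ corresponding to a projective dominant Verma $\Delta_0(\lambda')$. On the bimodule side, Lemma~\ref{LemLLL}(1)--(3) relates $\cL$ for $\fg$ to $\cL$ for $\fg_{\oa}$ through $\ind$ and $\res$, and the strong typicality of $\chi$ should give a compatible equivalence $\cH^1_\chi \cong \cH^1_{\chi^0}(\fg_{\oa})$ — indeed one can transport the bimodule category through $\phi_r$ using $\chi$ strongly typical on the right factor and Gorelik's equivalence on the right-module structure, then use that the adjoint action only sees $\fg_{\oa}$ (via \ref{LLL}). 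Under these two equivalences, the pair $(F,G)$ for $\fg$ should match the pair $(\cL(\Delta_0(\lambda'),-), -\otimes_{U_{\oa}}\Delta_0(\lambda'))$ for $\fg_{\oa}$, up to the harmless tensor factor $\Lambda\fg_{\ob}$ which is handled exactly as in the proof of Lemma~\ref{generate}. The classical statement — that $\cL(\Delta_0(\lambda'),-)$ and $-\otimes_{U_{\oa}}\Delta_0(\lambda')$ are mutually inverse equivalences $\cO_0(\fg_{\oa}) \simeq \cH^1$ for a regular dominant $\lambda'$ — is \cite[Theorem~5.9]{BG} (see also \cite[Kapitel~6]{Jantzen}), which we are entitled to assume.

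\textbf{Step 3: Alternative (more self-contained) route.} If making the bimodule-side equivalence $\cH^1_\chi \cong \cH^1_{\chi^0}$ precise proves awkward, I would instead argue directly: show the unit $\id \to F G$ and counit $G F \to \id$ are isomorphisms by checking on generators. Since $\Delta(\lambda) = P(\lambda)$ is projective and every projective in $\cO^\Lambda$ is a summand of $V \otimes \Delta(\lambda)$ with $V \in \cF$ (Lemma~\ref{generate}(2)), and since both $F$ and $G$ should intertwine the projective functor $V \otimes -$ with its counterpart on bimodules (using \eqref{EU} and the fact that $V \otimes \cL(\Delta(\lambda), M) \cong \cL(\Delta(\lambda), V \otimes M)$ up to summands for $V$ finite dimensional), it suffices to verify $F G$ and $G F$ are the identity on the single object $\Delta(\lambda)$ and on $U_\chi := \cL(\Delta(\lambda),\Delta(\lambda))$ respectively. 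That computation reduces, via Gorelik's equivalence applied to a single block, to the known Lie-algebra identity $\cL(\Delta_0(\lambda'),\Delta_0(\lambda')) \cong U_{\oa}/\Ann$ and $U_{\oa}/\Ann \otimes_{U_{\oa}} \Delta_0(\lambda') \cong \Delta_0(\lambda')$, invoking Proposition~\ref{propcompare} to promote the block-wise isomorphisms of functors to global ones. The main obstacle throughout is bookkeeping: matching up the right-module structures, the $\ad$-action, and the extra Clifford/exterior-algebra factor $\Lambda\fg_{\ob}$ across the two equivalences so that the identification of functor pairs is genuinely an identification and not merely an abstract equivalence of target categories.
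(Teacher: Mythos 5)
Your Step~2 has a genuine gap. You propose establishing $\cH^1_\chi(\fg)\cong\cH^1_{\chi^0}(\fg_{\oa})$ by transporting the right-module structure through Gorelik's equivalence and then invoking the adjoint action to reduce to $\fg_{\oa}$. But a bimodule in $\cH^1_\chi$ has {\em fixed} central character only on the right; on the left, its central character varies over all of $\fh^\ast/W$ (indeed, $\cL(\Delta(\lambda),M)$ for $M$ ranging over $\cO^\Lambda$ realizes every left central character appearing in $\Lambda$). Since Proposition~\ref{propGor} only applies to the strongly typical side, it cannot be applied to the left action, and simply ``using that the adjoint action only sees $\fg_{\oa}$'' does not convert the left $U$-module structure into a $U_{\oa}$-module structure. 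The category $\cH^1_\chi(\fg)$ is not a block, and there is no such equivalence with $\cH^1_{\chi^0}(\fg_{\oa})$; the theorem precisely identifies it with the much larger category $\cO^\Lambda$ (all blocks, all $\fg$-central characters).

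Your Step~3 fallback runs into a different obstruction: you invoke Proposition~\ref{propcompare} to upgrade block-wise isomorphisms of $FG$ and $GF$ with the identity to global isomorphisms. But $F$ is left exact and $G$ is right exact, so the compositions $FG$ and $GF$ are neither left nor right exact {\em a priori} (their exactness is exactly what you'd be trying to prove), and Proposition~\ref{propcompare} requires a definite direction of exactness. So the hypotheses of that proposition are not available at that stage of the argument.

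The paper's proof avoids both issues by arguing directly, without transporting the Harish-Chandra side. The key ingredients are: (i) from \eqref{EU}, every projective in $\cH^1_\chi$ is a summand of ${}_\times V\otimes U/J$ for $V$ projective in $\cF$, where $J=U\ker\chi$; (ii) Lemma~\ref{generate}(2) gives that every projective in $\cO^\Lambda$ is a summand of $V\otimes\Delta(\lambda)=T({}_\times V\otimes U/J)$; (iii) the crucial identity $U/J\cong\cL(\Delta(\lambda),\Delta(\lambda))$, which is not derived by reduction to $\fg_{\oa}$ but is taken directly from Gorelik's Annihilation Theorem (the reference \cite{Gorelika}, distinct from the typical-block equivalence in \cite{Gorelik}); and (iv) the abstract criterion \cite[Proposition~5.10]{BG}, which certifies that a right exact functor inducing a bijection on projectives and full-faithfulness between them is an equivalence. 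So the Gorelik equivalence and Lemma~\ref{LemLLL}, which you lean on heavily, play only a supporting role in the paper here; the load-bearing input is the super annihilation theorem.
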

\begin{proof}
We consider~$J=U\fm$, with~$\fm=\ker \chi$. From equation~\eqref{EU} and the fact
$$\Hom_{\UU}(U,M)\cong \Hom_{\UU}(U/J,M)$$
for any $M\in \cH^1_\chi$, it follows easily that every projective object in~$\cH^1_\chi$ is a direct summand of a bimodule of the form ${}_{\times}V\otimes U/J$ with~$V$ some projective object in~$\cF$.

We consider the functor
$$T:=-\otimes_{ U}\Delta(\lambda) \;:\;\; \cH^1_{\chi}\to \cO^{\Lambda},$$
which is right exact by construction. We clearly have
$$T({}_{\times}V\otimes U/J)=V\otimes \Delta(\lambda),$$
so in particular, $T$ maps projective objects in~$\cH^1_{\chi}$ to projective objects in~$\cO^\Lambda$. We claim that every projective object in~$\cO^\Lambda$ is a direct summand of an object in the image of~$T$. This follows from the analogue of Lemma~\ref{generate}(2), with the extra condition that the finite dimensional module $V$ is projective in $\cF$. This generalisation of Lemma~\ref{generate}(2) follows automatically as every module in~$\cF$ is the quotient of a projective object.

The last line in the proof of~\cite[Theorem~9.5]{Gorelika} implies we have an isomorphism of bimodules
$$U/J\;\;\tilde\to\;  \;\cL(\Delta(\lambda),\Delta(\lambda)).$$
Note that strictly speaking, as stated in \cite{Gorelika}, the results do not cover the exceptional basic classical Lie superalgebras. However, the arguments go through for any basic classical Lie superalgebra. This isomorphism implies in particular an isomorphism between $(U/J)^{\ad}$ and the largest locally finite submodule of~$\End_\mC(\Delta(\lambda))^{\ad}$. By equation~\eqref{EU}, this shows
$$\Hom_{\UU}({}_{\times}V \otimes U/J, U/J )\;\cong\;\Hom_U(V,\Delta(\lambda)^\ast\otimes\Delta(\lambda))\;\cong\; \Hom_U(V\otimes \Delta(\lambda),\Delta(\lambda)).$$
This can then be used to prove that
$$\Hom_{\UU}(P,P' )\cong \Hom_U(TP,TP'),$$
for all projective objects~$P,P'\in\cH^1_\chi$. By construction, this isomorphism is induced by $T$. The fact that~$T$ yields an equivalence of categories now follows from application of~\cite[Proposition~5.10]{BG}.

That the exact functor $\cL(\Delta(\lambda),-): \cO^{\Lambda}\to \cH^1_{\chi}$ is an inverse to~$T$ follows precisely as in \cite[Section~6]{BG}.
\end{proof}

\begin{rem}
Note that the theorem actually implies that~${}_{\times}V\otimes U/J$ is projective in~$\cH_\chi^1$ for any $V\in \cF$, even when $V$ is not projective. Comparison with \eqref{EU} seems to suggest that all finite dimensional summands of $M^{\ad}$ must be injective (and hence also projective) in~$\cF$, which is confirmed in the following proposition.
\end{rem}

\begin{prop}
For any $M\in \cH^1_\chi$,  the~$\fg$-module $M^{\ad}$ decomposes into modules which are projective in~$\cF$.
\end{prop}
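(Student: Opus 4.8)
The plan is to reduce the statement to the structure of projective objects in $\cH^1_\chi$ already obtained in the proof of Theorem~\ref{thmMM}, combined with the equivalence $\cO^\Lambda\cong\cH^1_\chi$ and the fact that the functor $(-)^{\ad}$ is exact. First I would recall that every object $M\in\cH^1_\chi$ admits a (two-step) projective presentation $P_1\to P_0\to M\to 0$ in $\cH^1_\chi$, where each $P_i$ is a direct summand of a bimodule of the form ${}_\times V\otimes U/J$ with $V$ projective in $\cF$. Applying the exact functor $(-)^{\ad}$ to such a presentation gives an exact sequence of $\fg$-modules; since $({}_\times V\otimes U/J)^{\ad}$ is a summand of $({}_\times V\otimes U)^{\ad}\cong V\otimes U^{\ad}$, and $U^{\ad}=\bigoplus_{V'\in\cF}(\dim V')\, V'$ decomposes as a direct sum of the finite-dimensional simple $\fg$-modules (each appearing with its multiplicity) by the classical Peter--Weyl-type decomposition of the adjoint action on $U$, we see that $({}_\times V\otimes U/J)^{\ad}$ is a direct sum of modules of the form $V\otimes V'$ with $V,V'\in\cF$ — in particular a (possibly infinite) direct sum of modules projective in $\cF$, using that $\cF$ is closed under tensoring and that $V\otimes V'$ is projective in $\cF$ whenever $V$ is, since $\cF$ has enough projectives and tensoring with a finite-dimensional module is exact and self-adjoint.

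Next I would exploit the key consequence of Theorem~\ref{thmMM} noted in the preceding remark: ${}_\times V\otimes U/J$ is projective in $\cH^1_\chi$ for \emph{every} $V\in\cF$, not merely for projective $V$. Hence $M^{\ad}$, for $M$ projective in $\cH^1_\chi$, is a summand of some $V\otimes U^{\ad}$ and is therefore a direct sum of modules projective in $\cF$; and since direct summands of direct sums of $\cF$-projectives are again direct sums of $\cF$-projectives (as $\cF$-projectives are precisely the injectives by the duality $\dd$, and the category $\cF$ is a sum of blocks each equivalent to modules over a finite-dimensional algebra, so Krull--Schmidt applies), the claim holds for projective $M$. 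For general $M\in\cH^1_\chi$, pick a projective cover $\pi\colon P\tto M$ in $\cH^1_\chi$; applying $(-)^{\ad}$ gives a surjection $P^{\ad}\tto M^{\ad}$ of $\fg$-modules. Now $P^{\ad}$ is a direct sum of $\cF$-projectives, which are injective, hence $P^{\ad}$ is injective in the (appropriate completion of the) category of weight modules, so the surjection splits and $M^{\ad}$ is a direct summand of $P^{\ad}$; being a summand of a direct sum of $\cF$-projectives, $M^{\ad}$ is itself a direct sum of modules projective in $\cF$.

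The main obstacle I anticipate is the bookkeeping around infinite direct sums and the precise sense in which ``projective in $\cF$'' is preserved under summands: $M^{\ad}$ need not lie in $\cF$ itself (it is only locally finite, not finite-dimensional), so one must work in a larger category — say the category of locally finite weight modules, or $\add$-closure of $\cF$ under arbitrary coproducts — and verify there that the relevant objects are injective and that Krull--Schmidt-type decomposition into $\cF$-indecomposables (each a summand of some $V\otimes V'$) is available. This is where I would be most careful; the cleanest route is probably to decompose $U^{\ad}$ once and for all as $\bigoplus_{V'}V'^{\oplus\dim V'}$ and then observe that every step above only ever produces summands of finite sums $V\otimes V'$ at the ``local'' level, so that no genuinely infinite-rank indecomposable ever arises and the decomposition claimed in the statement follows blockwise. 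The remaining steps — exactness of $(-)^{\ad}$, splitting of surjections onto injective-summand objects, and closure of $\cF$-projectives under tensoring — are routine given the results already established in Section~\ref{secHC}.
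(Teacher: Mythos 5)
The proposal has a genuine gap that invalidates the argument. You assert that $U^{\ad}$ ``decomposes as a direct sum of the finite-dimensional simple $\fg$-modules\dots by the classical Peter--Weyl-type decomposition.'' This is false for a basic classical Lie superalgebra: the category $\cF$ of finite-dimensional weight modules is not semisimple, and $U^{\ad}$ is not a direct sum of simple modules. (For instance, the trivial module $\mC\cdot 1\subset U$ is not a direct summand of $U^{\ad}$ unless $\cF$ is semisimple.) Consequently the subsequent claim that $({}_\times V\otimes U/J)^{\ad}$ is a direct summand of $V\otimes U^{\ad}$ — which would require the surjection $V\otimes U^{\ad}\tto ({}_\times V\otimes U/J)^{\ad}$ to split — has no justification, and the whole reduction collapses. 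In fact it is a warning sign that your argument never invokes the strong typicality or regularity of $\chi$ in an essential way: the statement is emphatically false for arbitrary central characters, and whatever proof one gives must exploit the special nature of $\chi$.

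The paper's proof exploits exactly this. It uses Theorem~\ref{thmMM} to write $M=\cL(\Delta(\lambda),N)$, then the Gorelik equivalence (Proposition~\ref{propGor} together with Lemma~\ref{LemGor}) to realise $\Delta(\lambda)$ as a direct summand of $\ind\Delta_0(\lambda')$, and then the tensor identity Lemma~\ref{LemLLL}(3) to identify $\cL(\ind\Delta_0(\lambda'),N)^{\ad}$ with $\ind\bigl(\cL(\Delta_0(\lambda'),\res N)^{\ad}\bigr)$. The essential point — which replaces your false Peter--Weyl claim — is that the adjoint action on $U_{\oa}$-bimodules \emph{is} semisimple (since $\fg_{\oa}$ is reductive), and that $\ind$ sends finite-dimensional $\fg_{\oa}$-modules to projective objects of $\cF$ (because $\ind$ is two-sided adjoint to the exact functor $\res$). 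So the decomposition into $\cF$-projectives comes from semisimplicity \emph{over $\fg_{\oa}$} plus induction, not from any semisimplicity over $\fg$. If you want to salvage your plan of reducing to projectives of $\cH^1_\chi$, you would still need to show $(U/J)^{\ad}$ decomposes into $\cF$-projectives for $J=U\fm_\chi$ with $\chi$ strongly typical regular; but this is essentially the content of the proposition itself (it is the case $V=\mC$), so that route is circular without an independent input such as the tensor identity used in the paper.
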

\begin{proof}
By Theorem~\ref{thmMM}, any $M\in \cH^1_\chi$ is of the form $\cL(\Delta(\lambda),N)$ for some $N\in\cO$. By Proposition~\ref{propGor} and Lemma~\ref{LemGor}, $\Delta(\lambda)$ is a direct summand of a module $\ind \Delta_0(\lambda')$. This leads to the conclusion that $M^{\ad}$ is a direct summand of $\cL(\ind \Delta_0(\lambda'),N)^{\ad}.$ By Lemma~\ref{LemLLL}(3) we find
$$\cL(\ind \Delta_0(\lambda'),N)^{\ad}\;\cong\; \ind\left( \cL(\Delta_0(\lambda'),\res N)^{\ad}\right).$$
Hence it follows that any finite dimensional direct summand of $M^{\ad}$ is a direct summand of a module induced from a finite dimensional~$\fg_{\oa}$-module, which proves the proposition.
\end{proof}

\begin{cor}\label{corMM}
\begin{enumerate}
\item The equivalence of Theorem~\ref{thmMM} yields a one to one correspondence between the annihilator ideals of simple modules in~$\cO^\Lambda$ and left annihilator ideals of simple modules in~$\cH^1_\chi$. Concretely we have
$$\Ann L =\Lann \cL(\Delta(\lambda),L),$$
for any simple object $L$ in~$\cO^\Lambda$.
\item For any $x\in W$, there is an equivalence~$\cO^\Lambda\cong \cO^{x\cdot\Lambda}$, which interchanges simple modules with identical annihilator ideals.
\end{enumerate}
\end{cor}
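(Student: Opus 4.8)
The plan is to derive both parts formally from Theorem~\ref{thmMM}, the elementary behaviour of one- and two-sided annihilators under the functors $\cL(\Delta(\lambda),-)$ and $-\otimes_U\Delta(\lambda)$, and the observation that $\cH^1_\chi$ depends only on $\chi$. For part~(1), recall that an equivalence of categories restricts to a bijection on isomorphism classes of simple objects, so $\cL(\Delta(\lambda),-)$ matches the simple objects of $\cO^\Lambda$ with those of $\cH^1_\chi$ via $L\leftrightarrow S:=\cL(\Delta(\lambda),L)$, with inverse $S\mapsto S\otimes_U\Delta(\lambda)$. It then suffices to prove the displayed identity $\Ann L=\Lann\cL(\Delta(\lambda),L)$ for simple $L$, which also yields the asserted term-by-term correspondence between the two sets of ideals. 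One inclusion is immediate: if $uL=0$ then $(u\alpha)(a)=u(\alpha(a))\in uL=0$ for every $\alpha\in\Hom_{\mC}(\Delta(\lambda),L)$ and $a\in\Delta(\lambda)$, so $u$ annihilates $\Hom_{\mC}(\Delta(\lambda),L)$ on the left, and hence its submodule $\cL(\Delta(\lambda),L)$; thus $\Ann L\subseteq\Lann\cL(\Delta(\lambda),L)$. For the reverse inclusion, note that for any bimodule $M$, if $u$ annihilates $M$ on the left then it annihilates $M\otimes_U\Delta(\lambda)$, so $\Lann M\subseteq\Ann(M\otimes_U\Delta(\lambda))$; applying this with $M=\cL(\Delta(\lambda),L)$ and using $\cL(\Delta(\lambda),L)\otimes_U\Delta(\lambda)\cong L$ from Theorem~\ref{thmMM} gives $\Lann\cL(\Delta(\lambda),L)\subseteq\Ann L$.

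For part~(2), the idea is to realise \emph{the same} Harish-Chandra category $\cH^1_\chi$ as an incarnation of both $\cO^\Lambda$ and $\cO^{x\cdot\Lambda}$. Fix a super dominant $\lambda\in\Lambda$ and set $\chi=\chi_\lambda$. Since the Harish-Chandra homomorphism of $\fg$ has image among the $W$-invariant polynomials on $\fh^\ast$, we have $\chi_{x\cdot\lambda}=\chi_\lambda$, so $x\cdot\lambda$ is again regular; and because the Weyl group $W$ of $\fg_{\oa}$ permutes the set $\Delta_{\ob}$ of odd roots of $\fg$ --- as one checks for each algebra in \eqref{list} --- the identity $\langle x\cdot\lambda+\rho,\gamma\rangle=\langle\lambda+\rho,x^{-1}\gamma\rangle$ shows $x\cdot\lambda$ is again strongly typical. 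Let $\lambda'$ be the dominant weight, for the $\rho$-shifted $W_{x\cdot\Lambda}$-action, in the orbit of $x\cdot\lambda$; since $W_{x\cdot\Lambda}\le W$, the same two observations show $\lambda'$ is still regular and strongly typical, hence super dominant in $x\cdot\Lambda$, and $\chi_{\lambda'}=\chi_{x\cdot\lambda}=\chi$. Theorem~\ref{thmMM} now supplies equivalences $\cL(\Delta(\lambda),-)\colon\cO^\Lambda\to\cH^1_\chi$ and $-\otimes_U\Delta(\lambda')\colon\cH^1_\chi\to\cO^{x\cdot\Lambda}$, whose composite $F$ is an equivalence $\cO^\Lambda\to\cO^{x\cdot\Lambda}$ sending a simple $L$ to the simple module $F(L)=\cL(\Delta(\lambda),L)\otimes_U\Delta(\lambda')$. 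Applying part~(1) inside $\cO^{x\cdot\Lambda}$ gives $\Ann F(L)=\Lann\cL(\Delta(\lambda'),F(L))$; since $\cL(\Delta(\lambda'),-)$ is inverse to $-\otimes_U\Delta(\lambda')$ we have $\cL(\Delta(\lambda'),F(L))\cong\cL(\Delta(\lambda),L)$, and part~(1) inside $\cO^\Lambda$ identifies $\Lann\cL(\Delta(\lambda),L)$ with $\Ann L$. Hence $\Ann F(L)=\Ann L$, as required.

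The only step that needs genuine attention is the construction of $\lambda'$: one must produce a super dominant weight in $x\cdot\Lambda$ with central character \emph{exactly} $\chi$, so that literally the same category $\cH^1_\chi$ reappears, and one must confirm that strong typicality --- the non-vanishing of $\langle\,\cdot\,+\rho,\gamma\rangle$ on all odd roots $\gamma$ --- survives both the twist by $x$ and the subsequent passage to a dominant representative. Both survive precisely because $W$, and hence $W_{x\cdot\Lambda}$, preserves the set of odd roots of $\fg$; everything else is bookkeeping with $\cL$ and $-\otimes_U\Delta(\lambda)$.
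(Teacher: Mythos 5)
Your proof is correct and takes essentially the same route as the paper: part~(1) by the same pair of inclusions (one from the definition of $\cL$, the other from $\cL(\Delta(\lambda),L)\otimes_U\Delta(\lambda)\cong L$), and part~(2) by realising $\cH^1_\chi$ as a common incarnation of $\cO^\Lambda$ and $\cO^{x\cdot\Lambda}$ via two applications of Theorem~\ref{thmMM} and then transporting part~(1). The only difference is that you spell out why a super dominant $\lambda'\in x\cdot\Lambda$ with $\chi_{\lambda'}=\chi$ exists (invariance of regularity, strong typicality and central character under the $W$-action), which the paper treats as immediate.
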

\begin{proof}
Set $L'=\cL(\Delta(\lambda),L)$. From construction it follows that~$\Lann L' \supset \Ann L$. By Theorem~\ref{thmMM}, we also know $L\cong L'\otimes_U \Delta(\lambda)$ which yields $\Ann(L) \supset \Lann L'$, proving claim~(1).

In the set $\Lambda'=x\cdot\Lambda$, we can choose some~$\lambda'\in W\cdot\lambda$, which is dominant and hence super dominant. Theorem~\ref{thmMM} then also yields $\cO^{\Lambda'}\cong \cH_\chi^1$. The conclusion about annihilator ideals follows immediately from applying claim~(1) to $\cO^\Lambda\cong\cH^1_\chi\cong \cO^{x\cdot\Lambda}$.
\end{proof}

\begin{rem}
The equivalence in part~(2) can also be obtained from the results in \cite[Section~5]{CM1}, as has been worked out explicitly for type~$A$ in \cite[Proposition 3.9]{CMW}. The property about annihilator ideals in part~(2) is then precisely \cite[Remark~5.13 and Lemma~5.15]{CM1}.
\end{rem}

We prove the following analogue of~\cite[Theorem~3.2]{Vogan}. Besides the generalisation to the setting of superalgebras, the relevance lies in the property that the central character for the left action on~$\cH$ must not be equal to that of the right action, or even be regular or typical.

\begin{thm}\label{thmVogan}
Consider two simple objects~$L_1$ and~$L_2$ in~$\cH_\chi^1$. We have~$\Lann L_1 \subset \Lann L_2$
if and only if there exists a $V\in \cF$ such that~$L_2$ is a subquotient of~$L_1\otimes V_\times$.
\end{thm}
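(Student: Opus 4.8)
The plan is to reduce the statement to the classical Vogan theorem \cite[Theorem~3.2]{Vogan} via the equivalence $\Phi$ of Proposition~\ref{propGor} (in the form of Lemma~\ref{LemGor}), after first replacing Harish--Chandra bimodules by objects of category~$\cO$ using Theorem~\ref{thmMM}. First I would translate the left annihilator condition on~$\cH^1_\chi$ into an ordinary annihilator condition on~$\cO^\Lambda$: by Corollary~\ref{corMM}(1), if $L_i = \cL(\Delta(\lambda),M_i)$ for simple $M_i\in\cO^\Lambda$, then $\Lann L_i = \Ann M_i$, so $\Lann L_1\subset\Lann L_2 \iff \Ann M_1\subset\Ann M_2$. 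Thus the theorem becomes a statement purely about when $\Ann M_1\subset\Ann M_2$ for simple modules in $\cO^\Lambda$, together with the claim that this is equivalent to $M_2$ being a subquotient of $V\otimes M_1$ for some $V\in\cF$; one must check that under the equivalence $T=-\otimes_U\Delta(\lambda)$ the operation $L_1\mapsto L_1\otimes V_\times$ corresponds to $M_1\mapsto V\otimes M_1$ (this is essentially the definition of $V_\times$ together with the fact that $T$ intertwines tensoring with finite-dimensional modules, which follows since $T(\,{}_\times V\otimes U/J\,)=V\otimes\Delta(\lambda)$ as in the proof of Theorem~\ref{thmMM}).

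Next I would pass through $\Phi$. Since $\lambda$ is super dominant, hence in particular $\chi=\chi_\lambda$ is regular and strongly typical, Proposition~\ref{propGor} gives $\cO^\Lambda_\chi\cong\cO(\fg_{\oa},\fb_{\oa})^\Lambda_{\chi^0}$, and by Lemma~\ref{LemGor} this equivalence sends $L(x\cdot\lambda)\mapsto L_0(x\circ\lambda')$. The key point is that annihilator ideals transport correctly: one should argue, as in Corollary~\ref{corMM}, that $\Ann_U M_i$ is determined by (and determines) $\Ann_{U_{\oa}}\Phi(M_i)$ in a way compatible with inclusions. Concretely, via the Harish--Chandra bimodule picture and Lemma~\ref{LemLLL}, $\cL(\Delta(\lambda),M_i)$ is a summand of $\ind^r$ applied to $\cL(\Delta_0(\lambda'),\res M_i)$, and tracking left annihilators through $\ind^r$ shows $\Ann_U M_1\subset\Ann_U M_2 \iff \Ann_{U_{\oa}}\Phi(M_1)\subset\Ann_{U_{\oa}}\Phi(M_2)$. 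On the other side, $M_2$ is a subquotient of $V\otimes M_1$ for some $V\in\cF$ if and only if $\Phi(M_2)$ is a subquotient of $V_0\otimes\Phi(M_1)$ for some finite-dimensional $\fg_{\oa}$-module $V_0$ --- one direction by applying $\res$ and using $\res(V\otimes M_1)\cong \res V\otimes \Lambda\fg_{\ob}\otimes \res M_1$ after projecting to $\chi^0$, the other by $\ind$ and the tensor identity, exactly as in the proof of Lemma~\ref{generate}(2). Finally, for reductive Lie algebras the equivalence $\Lann L_1'\subset\Lann L_2' \iff L_2'$ a subquotient of $L_1'\otimes V_0$ is precisely \cite[Theorem~3.2]{Vogan} (translated back from $\cH$ to $\cO$ via the classical analogue of Theorem~\ref{thmMM}, i.e. \cite[Section~6]{BG}), which closes the argument.

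The main obstacle I expect is the bookkeeping needed to show that inclusions of (left) annihilator ideals are \emph{faithfully} transported by the induction/restriction functors between $U$ and $U_{\oa}$ --- in particular that $\Ann_U M_1\subset\Ann_U M_2$ really is equivalent to, and not merely implied by, the corresponding inclusion after applying $\Phi$. This requires care because $U$ is a finite but nontrivial extension of $U_{\oa}$, and one must use that $\chi$ is strongly typical so that Gorelik's equivalence is available, together with the faithful-flatness / freeness of $U$ over $U_{\oa}$ (PBW) to control how two-sided ideals behave. Once this dictionary is set up cleanly, the rest is a matter of quoting Theorem~\ref{thmMM}, Lemma~\ref{LemGor}, Lemma~\ref{LemLLL}(3), and Vogan's theorem.
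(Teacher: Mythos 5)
Your proposal takes a genuinely different route from the paper, but it has a gap that I do not think can be repaired. You want to reduce Theorem~\ref{thmVogan} to Vogan's classical theorem for $\fg_{\oa}$ via Gorelik's equivalence $\Phi$. The problem is that $\Phi$ from Proposition~\ref{propGor} is an equivalence only between the \emph{single block} $\cO^\Lambda_\chi$ and $\cO(\fg_{\oa},\fb_{\oa})^\Lambda_{\chi^0}$, with $\chi$ strongly typical and regular. But the simple modules $M_i = L_i\otimes_U\Delta(\lambda)$ corresponding to $L_1, L_2 \in \cH^1_\chi$ range over \emph{all} of $\cO^\Lambda$: the \emph{left} central character of $L_i$, and hence the central character of $M_i$, is completely unconstrained and need not be strongly typical or even regular. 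The author stresses exactly this in the remark preceding the theorem: the central character for the left action on $\cH$ must not be equal to that of the right action, or even be regular or typical. So $\Phi(M_i)$ is simply undefined in the cases that matter, and your key claim ``$\Ann_U M_1\subset\Ann_U M_2 \iff \Ann_{U_{\oa}}\Phi(M_1)\subset\Ann_{U_{\oa}}\Phi(M_2)$'' has no content. Moreover, even when all central characters are strongly typical, transporting \emph{inclusions} of primitive ideals faithfully across the $U/U_{\oa}$ divide is precisely the hard content of the paper, not bookkeeping: the whole point of the completed KL order, and of Sections~\ref{secCompletion}--\ref{secConclusion}, is that inclusions are strictly more prevalent for $U$ than for $U_{\oa}$, so $\Ann_U M_1\subset\Ann_U M_2$ cannot in general be read off from any naive $U_{\oa}$-level condition. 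One direction ($\Ann_U M_1\cap U_{\oa}\subset\Ann_U M_2\cap U_{\oa}$) is automatic; the converse is false, and that asymmetry is exactly what the paper is about.

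The paper's own proof avoids any such change of algebra. It first establishes Proposition~\ref{propVogan}, an embedding $U/\Lann(L)\hookrightarrow L\otimes V^1_\times$ for a suitable $V^1\in\cF$, using the bimodule/category-$\cO$ dictionary (Theorem~\ref{thmMM} and Corollary~\ref{corMM}(1) --- this is where Gorelik's equivalence enters, but applied to $\Delta(\lambda)$, never to $M_i$). Combined with the adjoint surjection $U/J_2\otimes E_\times\twoheadrightarrow L_2$, the implication $J_1\subset J_2\Rightarrow L_2$ is a subquotient of $L_1\otimes V_\times$ then follows by pure bimodule algebra, exactly as in the classical argument but carried out directly for $\fg$. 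If you want to salvage your write-up, that is the path to follow: use the Harish--Chandra picture to produce the embedding for the simple bimodule itself, rather than trying to compare primitive spectra of $U$ and $U_{\oa}$.
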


\begin{prop}\label{propVogan}
For any simple object $L$ in~$\cH_\chi^1$, there exists a $V^1\in \cF$ such that 
$$U/\Lann(L)\hookrightarrow L\otimes V^1_{\times}.$$
\end{prop}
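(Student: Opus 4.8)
The goal is to show that for a simple object $L$ in $\cH^1_\chi$, there is a finite-dimensional $V^1$ with $U/\Lann(L)\hookrightarrow L\otimes V^1_\times$. The natural approach is to transport the problem to category $\cO$ via the equivalence of Theorem~\ref{thmMM}, prove the corresponding statement there, and pull it back. Under $T=-\otimes_U\Delta(\lambda)$ we have $L\cong TL'$ for a simple $L'=L''\otimes_U\Delta(\lambda)$ in $\cO^\Lambda$, and by Corollary~\ref{corMM}(1), $\Lann L=\Ann L'$. So it is enough to produce a finite-dimensional $V^1$ and an embedding $U/\Ann L(\lambda')\hookrightarrow$ (something built from $L'$ and $V^1$) which, after applying $\cL(\Delta(\lambda),-)$, gives the claimed injection; the right action by $V_\times$ on the $\cH$-side corresponds, under the equivalence, to tensoring on the $\cO$-side with $V$ (since $T(M\otimes V_\times)\cong TM\otimes V$, using the definition of $V_\times$ and that $-\otimes_U\Delta(\lambda)$ is a quotient of $\Hom_\mC$-type constructions), so the two formulations match up.

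**Key steps.** First I would recall the standard fact (as in \cite[Section~5]{BG} or \cite[Kapitel~6]{Jantzen}) that for a simple module $L(\mu)$ in $\cO$, the bimodule $\cL(L(\mu),L(\mu))$ contains $U/\Ann L(\mu)$ as a sub-bimodule — indeed $U/\Ann L(\mu)\hookrightarrow \End_\mC(L(\mu))$ and the image lands in the maximal ad-locally-finite part, which is $\cL(L(\mu),L(\mu))$ by construction. Second, I would use that $\cL(L(\mu),L(\mu))$, being a Harish-Chandra bimodule with the right central character, embeds into $\cL(\Delta(\lambda),N)$ for a suitable $N$ after twisting — more precisely, by Theorem~\ref{thmMM} applied with the dominant $\lambda$, every object of $\cH^1_\chi$ is $\cL(\Delta(\lambda),N)$ for $N\in\cO^\Lambda$, and one checks that $L=\cL(\Delta(\lambda),L')$. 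Third, and this is the crux, I would show $\cL(\Delta(\lambda),L')\otimes V_\times \cong \cL(\Delta(\lambda),L'\otimes V)$ (or contains it as a summand) for appropriate $V\in\cF$, using the behaviour of $\cL$ under tensoring with finite-dimensional modules, and then exploit that for $V$ large enough $L'\otimes V$ surjects onto $\Delta(\lambda)$ (since $\lambda$ is super dominant, $\Delta(\lambda)=P(\lambda)$ is projective and, via Lemma~\ref{generate}(2)-type arguments, a summand of $V\otimes(\text{any nonzero module's translate})$). Applying left-exactness of $\cL(\Delta(\lambda),-)$ to the surjection $L'\otimes V\tto \Delta(\lambda)$ would be the wrong direction; instead I would use a nonzero map $\Delta(\lambda)\to L'\otimes V$ (which exists by projectivity of $\Delta(\lambda)$ once $L'\otimes V$ has $L(\lambda)$ in its top, arranged by choosing $V$ with $L(-w_0\cdot\lambda)$-type highest weight) and apply $\cL(\Delta(\lambda),-)$ to get $U/\Ann\hookrightarrow \cL(\Delta(\lambda),\Delta(\lambda))\hookrightarrow \cL(\Delta(\lambda),L'\otimes V)\cong L\otimes V_\times$, invoking that $\cL(\Delta(\lambda),\Delta(\lambda))\cong U/J$ and $U/J\tto U/\Ann L$.

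**Main obstacle.** The delicate point is the compatibility isomorphism $\cL(\Delta(\lambda),M)\otimes V_\times\cong \cL(\Delta(\lambda),M\otimes V)$ and the precise choice of $V$ so that there is a map $\Delta(\lambda)\to L'\otimes V$ inducing, after $\cL(\Delta(\lambda),-)$, an embedding whose source still contains $U/\Ann L$. One must be careful that $L'\otimes V$ need not be projective, so the map $\Delta(\lambda)\to L'\otimes V$ exists only if the socle or an appropriate subquotient of $L'\otimes V$ contains $L(\lambda)$; arranging this uses that $\Delta(\lambda)=\nabla(\lambda)$-type self-duality fails, so one should work with $\nabla(\lambda)$ and the duality $\dd$, or equivalently replace "sub" by "quotient" and dualise at the end. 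Managing the interplay of $\dd$, the parity-forgetting conventions, and the left/right twist $t$ built into $V_\times$ is the bookkeeping that needs the most care; the underlying existence statements (finite-dimensional $V$ with $L(\kappa)$ finite-dimensional making things regular strongly typical, super dominant $\lambda$, summand decompositions) are all already available from Lemma~\ref{lemZar}, Lemma~\ref{generate}, Theorem~\ref{thmMM} and Corollary~\ref{corMM}.
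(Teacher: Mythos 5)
Your proposal diverges substantially from the paper's argument and contains two genuine gaps.

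First, in your main chain you want $U/\Ann L'\hookrightarrow \cL(\Delta(\lambda),\Delta(\lambda))\cong U/J$. But $J=U\fm\subseteq\Ann L'$, so $U/\Ann L'$ is a \emph{quotient} of $U/J$, not a sub-bimodule; the arrow points the wrong way. (The two-sided ideal $\Ann L'/J$ is what sits inside $U/J$, not the quotient algebra.) This makes the proposed chain break at the first step.

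Second, the map $\Delta(\lambda)\to L'\otimes V$ you obtain from projectivity of $\Delta(\lambda)=P(\lambda)$ once $L(\lambda)$ sits in the top of $L'\otimes V$ is only guaranteed to be non-zero, not injective. Left-exactness of $\cL(\Delta(\lambda),-)$ applied to a non-injective map gives you nothing. You do flag this issue and suggest fixing it with $\dd$ and $\nabla(\lambda)$, but that fix is not worked out and it is not clear it can be: $\Delta(\lambda)=P(\lambda)$ is projective but not injective in general, so there is no reason for $\Delta(\lambda)$ to embed into $L'\otimes V$, and after dualising you would instead face the dual version of the same problem.

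The paper's actual proof sidesteps all of this. Choose $V^1$ dual to a simple submodule of $L^{\ad}$, so that $M:=L\otimes V^1_\times$ has the trivial module in the socle of $M^{\ad}$. The adjunction \eqref{EU} (with $V=\mC$) turns a trivial submodule of $M^{\ad}$ into a $\UU$-morphism $U\to M$, $1\mapsto\alpha$. Writing $L=\cL(\Delta(\lambda),L')$ and using the tensor compatibility together with \eqref{invariants}, one sees that $\alpha$ is literally a $\fg$-morphism $\Delta(\lambda)\otimes V^1\to L'$; because $L'$ is simple and $\alpha\neq0$, its image is all of $L'$. Then the left annihilator of $\alpha$ inside $\Hom_\mC(\Delta(\lambda)\otimes V^1,L')$ is exactly $\Ann L'$, which equals $\Lann L$ by Corollary~\ref{corMM}(1). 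So the kernel of $U\to M$ is $\Lann L$, giving the desired embedding. The key idea you are missing is to generate the copy of $U/\Lann L$ from a trivial ad-submodule of $L\otimes V^1_\times$ itself, rather than from an injection out of $\Delta(\lambda)$: one never needs injectivity of any map in $\cO$, only surjectivity onto the simple $L'$, which is automatic.
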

\begin{proof}
Take any $V^1\in \cF$, which is dual to a simple submodule of $L^{\ad}$. Then by construction~$M:=L\otimes V^1_{\times}\in \cH$ is such that~$M^{\ad}$ has the trivial module in its socle. Consider the regular bimodule $U\in\cH$. Mapping $1\in U$ to a non-zero vector $\alpha$ in a trivial submodule of $M^{\ad}$ induces a $\UU$-module morphism $U\to M$. From Theorem~\ref{thmMM}, we know that~$L$ is of the form $\cL(\Delta(\lambda),L')$ for~$L'$ a simple highest weight module. By construction and equation~\eqref{invariants} we find that~$\alpha\in\cL(\Delta(\lambda)\otimes V^1,L')\subset \Hom_\mC(\Delta(\lambda)\otimes V^1,L')$ is an element of~$\Hom_{\fg}(\Delta(\lambda)\otimes V^1,L')$. So in particular the image of~$\alpha$ is $L'$. Corollary \ref{corMM}(1) implies that~$\Ann(L')=\Lann(L)$. So the only $u\in U$ which act trivially on~$\alpha$ from the left are the elements in~$\Lann(L)$. It therefore follows that the kernel of the~$\UU$-module morphism $U\to L\otimes V^1_{\times}$ is precisely
$\Lann(L)$.
\end{proof}

\begin{pfVogan}
First assume that~$L_2$ is a subquotient of~$L_1\otimes V_\times$. As $$\Lann \left(L_1\otimes V_{\times}\right)=\Lann(L_1),$$ the inclusion $\Lann (L_1)\subseteq\Lann(L_2)$ follows immediately. 

Now we prove the other direction of the assertion. We set $J_i:=\Lann L_i$ for $i\in\{1,2\}$. Proposition~\ref{propVogan} implies that there exist $V^1,V^2\in \cF$ such that
$$
U/J_1\hookrightarrow L_1\otimes V_\times^1\quad\mbox{and}\quad U/J_2\hookrightarrow L_2\otimes V_\times^2.
$$
The second property implies, by adjunction and the fact that~$L_2$ is simple, that for $E=(V^2)^\ast$ there is a morphism
$$U/J_2\otimes E_\times\tto L_2.$$
Now assume that~$J_1 \subset J_2$. By the above,~$L_2$ is a quotient of~$U/J_2\otimes E_\times$, which is a quotient of~$U/J_1\otimes E_\times$, which is a submodule of~$$L_1\otimes  (V_1\otimes E)_\times.$$
The claim hence follows for~$V=V_1\otimes E$.
\qed
\end{pfVogan}


\section{Twisting and completion functors}\label{secCompletion}
In this section we fix an arbitrary~$\Lambda\in\fh^\ast/\Lambda_0$, some simple~$s\in W_\Lambda$ and a super dominant~$\lambda\in \Lambda$. We then set $\chi=\chi_\lambda$ and $\chi^0$ equal to some central character for~$\fg_{\oa}$ for which we can apply Proposition~\ref{propGor}. We will introduce and study a completion functor $G_s$ on $\cO^\Lambda$. Note that a different approach to the completion functor for superalgebras, closer to the original definition by Enright, has recently been introduced in the more general setting of Kac-Moody superalgebras in \cite{Iohara}.

\subsection{}We propose an analogue of Joseph's version of Enright's completion functor, see \cite{Joseph2}:
$$G_s:=\cL(\Delta(s\cdot\lambda),-)\otimes_U \Delta(\lambda)\,: \;\cO^\Lambda\to\cO^{\Lambda}.$$
By construction, this functor is left exact. That $G_s$ is an endofunctor on~$\cO^\Lambda$ follows from Theorem~\ref{thmMM} and the fact that~$\cL(\Delta(s\cdot\lambda),-)$ is a functor $\cO^\Lambda\to \cH^1_{\chi}$. It could be proved that~$G_s$ does not explicitly depend on the specific super dominant $\lambda\in\Lambda$, similarly to the proof sketched in \cite[Section~2.2]{Joseph2}. However, Proposition~\ref{propcompare} and Lemma~\ref{lemRes}(3) below allow to reduce the statement to \cite[Section~2.2]{Joseph2}. Finally, note that it follows immediately from the definition that $\Ann_U G_sM\supseteq \Ann_U M$ for any $M\in \cO$ and that $G_s$ preserves the generalised central character of modules.

\begin{lemma}\label{lemRes}
Denote the completion functor of~\cite{Joseph2} on~$\cO^\Lambda(\fg_{\oa},\fb_{\oa})$ by~$G^0_s$, then 
\begin{enumerate}
\item $\res\circ G_s\;\cong\; G^0_s\circ \res,$
\item $G_s\circ\ind\;\cong\;\ind\circ G^0_s$,
\item through the equivalence in Proposition~\ref{propGor}, $G_s$ restricted to $\cO_\chi$ is interchanged with $G_s^0$ restricted to $\cO_{\chi^0}$.
\end{enumerate}
\end{lemma}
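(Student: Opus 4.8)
The plan is to prove the three statements in sequence, deriving (2) and (3) from (1). All three are naturality statements for the completion functor $G_s$, which is built from the Harish-Chandra bimodule functor $\cL(\Delta(s\cdot\lambda),-)$ followed by $-\otimes_U\Delta(\lambda)$; the corresponding $G^0_s$ for $\fg_{\oa}$ has the same shape. The key inputs are Lemma \ref{LemLLL} (the compatibility of $\cL(-,-)$ with $\ind$ and $\res$), the description $\Delta(\lambda)\cong$ (summand of) $\ind\Delta_0(\lambda')$ from Lemma \ref{LemGor}, and Proposition \ref{propGor} relating strongly typical blocks to blocks for $\fg_{\oa}$.

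First I would prove (2). Applying $G_s\circ\ind$ to a module $N\in\cO^\Lambda(\fg_{\oa},\fb_{\oa})$ gives $\cL(\Delta(s\cdot\lambda),\ind N)\otimes_U\Delta(\lambda)$. Since $s$ is simple in $W_\Lambda$ and $\lambda$ is super dominant, $s\cdot\lambda$ is again in the same block, and by Lemma \ref{LemGor} (and the typical-case analysis in \cite[Lemma~5.4]{CM1}) $\Delta(s\cdot\lambda)$ is a summand of $\ind\Delta_0(s\circ\lambda')$; applying Lemma \ref{LemLLL}(1) then rewrites $\cL(\Delta(s\cdot\lambda),\ind N)$ as $\ind^r$ applied to a bimodule for $U\times U_{\oa}$ built from $\cL^0(\Delta_0(s\circ\lambda'),N)$. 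Tensoring on the right with $\Delta(\lambda)\cong$ (summand of) $\ind\Delta_0(\lambda')$ over $U$ and using the tensor identity (associativity of $\otimes_{U_{\oa}}$ and $\otimes_U$, exactly as in the proof of Lemma \ref{LemLLL}) collapses the composite to $\ind\bigl(\cL^0(\Delta_0(s\circ\lambda'),N)\otimes_{U_{\oa}}\Delta_0(\lambda')\bigr)=\ind\circ G^0_s(N)$. I would carry out the bookkeeping of central-character summands with the exact functors $(-)_\chi$, $(-)_{\chi^0}$, which commute with $\cL$, $\ind$ and $\res$ on the relevant blocks.

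Next, (1) follows from (2) by adjunction: $\res$ is both left and right adjoint to $\ind$ (see \ref{Prel111}), and $G_s$, $G^0_s$ are both left exact. Precisely, I would use the equivalence of Theorem \ref{thmMM} (and its $\fg_{\oa}$-analogue) to reduce to checking the isomorphism on projective generators, where $\res$ of a tensor product $V\otimes\Delta(\lambda)$ is computed via $\res(V\otimes\Delta(\lambda))\cong\res V\otimes\Delta_0(\lambda')\otimes\Lambda\fg_{\ob}$ as in the proof of Lemma \ref{generate}(2); then (2) applied to each $\ind$-summand gives the claim, and left exactness propagates it to all of $\cO^\Lambda$. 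Finally, (3) is the specialization of (1) (or (2)) to the block $\cO_\chi$: on that block Proposition \ref{propGor} says $\res(-)_{\chi^0}$ and $\ind(-)_\chi$ are mutually inverse equivalences intertwining $\cO_\chi$ and $\cO_{\chi^0}$, so (1) literally states that $G_s$ and $G^0_s$ correspond under this equivalence; one only needs to check that $G_s$ preserves the generalized central character $\chi$ (noted already after the definition of $G_s$) so that the restriction makes sense, and that $s\circ\lambda'$ stays in the block $\chi^0$, which is automatic since $s\in W_\Lambda$.

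The main obstacle I anticipate is (2): keeping track of the several tensor-product associativity isomorphisms and the $\Lambda\fg_{\ob}$-factor that appears whenever one moves $\res$ past $\ind$, while making sure the identifications of $\Delta(\lambda)$ and $\Delta(s\cdot\lambda)$ with summands of induced Verma modules are compatible with the bimodule structures used in Lemma \ref{LemLLL}. Once the "tensor identity" manipulations of that lemma are invoked carefully, (1) and (3) are formal consequences via adjunction and Proposition \ref{propGor}. An alternative, and perhaps cleaner, route would be to invoke Proposition \ref{propcompare}: all of $G_s$, $\ind\circ G^0_s\circ\res$ and their relatives are left exact and commute with projective functors (the latter by the tensor identity again), so it would suffice to identify them on strongly typical regular blocks, where everything reduces to the known Lie-algebra statement of \cite[Section~2.2]{Joseph2} through Proposition \ref{propGor}; I would mention this as the conceptual reason the lemma holds even if I write the direct computation for (2).
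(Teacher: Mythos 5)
Your plan for part (2) (pushing $\ind$ through the two arguments of $\cL$ and then collapsing the $\otimes_U$ with central-character truncations) is workable in spirit, but it is more awkward than the paper's argument, which makes a single clean rewriting of $G_s$ and then reads off (1), (2) and (3) at once. Concretely, the paper first observes that $\cL(\Delta(s\cdot\lambda),-)\otimes_U\Delta(\lambda)\cong\cL(\Delta(s\cdot\lambda),-)\otimes_U\ind\Delta_0(\lambda')\cong\res^r\cL(\Delta(s\cdot\lambda),-)\otimes_{U_{\oa}}\Delta_0(\lambda')$, applies Lemma~\ref{LemLLL}(2) to move the restriction into the first argument, and then truncates $\res\Delta(s\cdot\lambda)$ to $\Delta_0(s\circ\lambda')$. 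This yields $G_s\cong\cL(\Delta_0(s\circ\lambda'),-)\otimes_{U_{\oa}}\Delta_0(\lambda')$ as a functor $U$-mod$\to U$-mod, which is \emph{verbatim} the formula for $G^0_s$ with the second $\cL$-argument allowed to be a $U$-module. Claims (1) and (2) then follow in one line from the left-action analogues of Lemma~\ref{LemLLL}(1)--(2), with no extra bookkeeping. Your route attacks the first argument of $\cL$ via $\ind$, which introduces an extra factor of $U\otimes_{U_{\oa}}$ that must be killed by hand.

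The real gap is in your derivation of (1) from (2). The adjunction $\ind\dashv\res\dashv\ind$ applied to the isomorphism $G_s\circ\ind\cong\ind\circ G^0_s$ produces a relation between $\res$ and the \emph{adjoints} of $G_s$ and $G^0_s$ (in fact it would give $\res\circ T_s\cong T^0_s\circ\res$, i.e.\ equation~\eqref{eqTRI}), not the statement $\res\circ G_s\cong G^0_s\circ\res$; $G_s$ is not self-adjoint. Your fallback -- checking on projective generators $V\otimes\Delta(\lambda)$ and "propagating by left exactness" -- also does not work: a natural isomorphism between two \emph{left} exact functors is established by checking on injectives and then taking injective resolutions, not projectives. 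Finally, the alternative via Proposition~\ref{propcompare} is not a shortcut here: the functors $\res\circ G_s$ and $G^0_s\circ\res$ are not endofunctors so that proposition does not literally apply, and verifying agreement on strongly typical regular blocks is precisely claim (3), which in this lemma is derived as a corollary of (1) rather than being available as an independent input. In short, (3) is the cheap consequence and (1)--(2) are where the work has to go; your ordering inverts that.
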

\begin{proof}
We use the functors $\cL$ from \ref{LLL}. Proposition~\ref{propGor} and Lemma~\ref{LemGor} imply that for some~$\lambda'\in \Lambda_0$, which is regular dominant for~$\fg_{\oa}$, it follows that~$\ind \Delta_0(\lambda')$ decomposes as $\Delta(\lambda)\oplus N$ where~$N_{\chi}=0$. We can therefore rewrite~$G_s$, using Lemma~\ref{LemLLL}(2), as
$$G_s\;\cong\;\cL(\Delta(s\cdot\lambda),-)\otimes_U U\otimes_{U_{\oa}}\Delta_0(\lambda')\;\cong\; \cL(\res \Delta(s\cdot\lambda),-)\otimes_{U_{\oa}}\Delta_0(\lambda').$$
By Proposition~\ref{propGor} and Lemma~\ref{LemGor} it follows that~$\res \Delta(s\cdot\lambda)$ decomposes as $\Delta_0(s\circ\lambda')\oplus N'$ where~$(N')_{\chi^0}=0$. This finally yields
$$G_s\cong \cL( \Delta_0(s\circ\lambda'),-)\otimes_{U_{\oa}}\Delta_0(\lambda').$$
Claim (1) and (2) then follow from the analogues of Lemma~\ref{LemLLL}(1) and (2) for the left action.

Claim (3) is an immediate consequence of the description of the equivalence in Proposition~\ref{propGor} and the above.
\end{proof}

\begin{thm}\label{ThmWork}
Consider~$\lambda_1,\lambda_2\in \Lambda$. We have~$J(\lambda_1)\subset J(\lambda_2)$ if and only if $L(\lambda_2)$ is a subquotient of~$G_{s_1}G_{s_2}\cdots G_{s_k} L(\lambda_1)$, for some simple reflections $s_1,s_2,\cdots,s_k\in W_\Lambda$.
\end{thm}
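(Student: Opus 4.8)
The statement is an instance of the transfer of the Vogan-type criterion (Theorem~\ref{thmVogan}) along the equivalence $\cO^\Lambda\cong\cH^1_\chi$ of Theorem~\ref{thmMM}, combined with an identification of the composition of completion functors $G_{s_1}\cdots G_{s_k}$ with a projective functor on the Harish--Chandra bimodule side. The first step is to recall from Corollary~\ref{corMM}(1) that $\Ann L(\lambda_i)=\Lann\cL(\Delta(\lambda),L(\lambda_i))$, so the inclusion $J(\lambda_1)\subset J(\lambda_2)$ is equivalent to $\Lann L_1'\subseteq\Lann L_2'$, where $L_i':=\cL(\Delta(\lambda),L(\lambda_i))$ are the corresponding simple objects of $\cH^1_\chi$. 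By Theorem~\ref{thmVogan} this holds if and only if $L_2'$ is a subquotient of $L_1'\otimes V_\times$ for some $V\in\cF$. So the whole task reduces to showing that, under the inverse equivalence $-\otimes_U\Delta(\lambda)$, the operation ``$-\otimes V_\times$ for varying $V\in\cF$, then pass to subquotients'' corresponds precisely to ``apply $G_{s_1}\cdots G_{s_k}$ for varying simple $s_i\in W_\Lambda$, then pass to subquotients.''

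\textbf{Key steps.} First I would establish that for any $V\in\cF$ the functor on $\cO^\Lambda$ obtained by transporting $-\otimes V_\times$ through the equivalence, namely $M\mapsto\big(\cL(\Delta(\lambda),M)\otimes V_\times\big)\otimes_U\Delta(\lambda)$, is (up to summands and composition) built from the functors $G_s$. The bridge is that $G_s$ itself is, by its very definition $G_s=\cL(\Delta(s\cdot\lambda),-)\otimes_U\Delta(\lambda)$, the transport through the equivalence of the Harish--Chandra functor $\cL(\Delta(s\cdot\lambda),-)\otimes_U\Delta(s\cdot\lambda)$ followed by the change-of-block equivalence $\cO^{s\cdot\Lambda}\cong\cO^\Lambda$ of Corollary~\ref{corMM}(2); more usefully, on the bimodule side tensoring with $\cL(\Delta(s\cdot\lambda),\Delta(\lambda))$-type modules and with ${}_\times V\otimes U/J$-type projectives generate the same class of operations as the indecomposable projective functors. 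Concretely: by Proposition~\ref{BGclass} (via Lemma~\ref{lemRes} transporting to $\fg_{\oa}$ and back), every indecomposable projective functor $\theta_x$ on $\fg\mbox{-mod}_\chi$ is a summand of a composition $\theta_{s_1}\cdots\theta_{s_k}$ of wall-crossing functors for simple $s_i\in W_\Lambda$; and the composition $G_{s_1}\cdots G_{s_k}L(\lambda_1)$ should be identified — up to taking subquotients, which is all that is asked — with the result of applying the corresponding projective functor to $L(\lambda_1)$ in the appropriate block, because each $G_s$ agrees with the relevant wall-crossing/translation combination on its socle and top. Then the classical fact (as in Vogan's argument, reproved here in the proof of Theorem~\ref{thmVogan}) that $L_2'\le L_1'$ in the two-sided sense iff $L_2'$ appears in $L_1'\otimes V_\times$ for some $V$, transported back, gives exactly the $G$-statement. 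One direction (the ``if'') is easy: $\Ann_U G_sM\supseteq\Ann_U M$ was already observed right after the definition of $G_s$, so if $L(\lambda_2)$ is a subquotient of $G_{s_1}\cdots G_{s_k}L(\lambda_1)$ then $J(\lambda_2)\supseteq J(\lambda_1)$ since annihilators only grow under subquotients and under each $G_s$.

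\textbf{Main obstacle.} The hard part is the ``only if'' direction, and specifically the precise dictionary between $G_{s_1}\cdots G_{s_k}$ and projective functors at the level of \emph{which simple subquotients occur}. The functors $G_s$ are only left exact and are genuinely different from projective functors (they are completion functors), so I cannot simply identify $G_{s_1}\cdots G_{s_k}$ with a $\theta_{s_1}\cdots\theta_{s_k}$. What saves the argument is that Theorem~\ref{thmVogan} is phrased with ``subquotient'', and the relation between completion functors, twisting functors, and projective functors is tight enough at the level of composition series: I expect the cleanest route is to show that for a simple $L(\lambda_1)$ the socle (or a suitable submodule) of $G_s L(\lambda_1)$ controls which $L(\nu)$ can appear, mirror the inductive argument in the proof of Theorem~\ref{thmVogan} (using Proposition~\ref{propVogan}: $U/\Lann(L')\hookrightarrow L'\otimes V^1_\times$) on the category-$\cO$ side via $-\otimes_U\Delta(\lambda)$, and invoke Proposition~\ref{propcompare} together with Lemma~\ref{lemRes} to reduce any functorial identity needed to the already-known statement for $\fg_{\oa}$. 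So the real work is bookkeeping: carefully transporting the bimodule inclusion $U/J_2\otimes E_\times\tto L_2$ from the proof of Theorem~\ref{thmVogan} through Theorem~\ref{thmMM} to realise $L(\lambda_2)$ inside a composition of $G_s$'s applied to $L(\lambda_1)$, and checking that tensoring-by-$\cF$ on the bimodule side really is absorbed into compositions of the $G_s$ (equivalently, into projective functors) up to summands.
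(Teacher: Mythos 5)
Your overall architecture matches the paper's: translate to $\cH^1_\chi$ via Corollary~\ref{corMM}(1), apply Theorem~\ref{thmVogan}, and invoke Proposition~\ref{BGclass} to replace arbitrary $V\in\cF$ by compositions of wall-crossing functors $\theta_{s_i}$. Your treatment of the ``if'' direction is valid and in fact more elementary than the paper's: you use only the observation $\Ann_U G_sM\supseteq\Ann_U M$ together with monotonicity of annihilators under subquotients, whereas the paper routes this direction through Theorem~\ref{thmVogan} as well. A minor slip in your ``bridge'' paragraph: since $s\in W_\Lambda$ one has $s\cdot\Lambda=\Lambda$, so there is no change-of-block equivalence of Corollary~\ref{corMM}(2) hidden inside $G_s$; your re-description of $G_s$ there is off, though not load-bearing.

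The genuine gap is in the ``only if'' direction, which you yourself flag as the hard part but do not actually carry out. The device the paper uses, and which you would need, is the short exact sequence coming from the Verma filtration of $\theta_s\Delta(\lambda)\cong P(s\cdot\lambda)$, namely $0\to\Delta(\lambda)\to\theta_s\Delta(\lambda)\to\Delta(s\cdot\lambda)\to0$; applying the contravariant left exact $\cL(-,L(\lambda_1))$ yields $0\to\cL(\Delta(s\cdot\lambda),L(\lambda_1))\to\cL(\theta_s\Delta(\lambda),L(\lambda_1))\to L_1$. The first term is exactly what $G_sL(\lambda_1)$ transports to under Theorem~\ref{thmMM}, the middle term is what tensoring with a finite-dimensional module transports to, and the cokernel is the simple $L_1$; iterating this exact sequence is precisely what makes the subquotient conditions match in both directions. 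Instead you reach for Proposition~\ref{propcompare} and Lemma~\ref{lemRes}, which identify functors up to isomorphism after reduction to strongly typical regular blocks; that machinery does not track which simple subquotients survive through a filtration and is not the relevant tool here. Likewise, your assertion that ``$G_s$ agrees with the relevant wall-crossing/translation combination on its socle and top'' is unproved, is not something the paper uses, and is not needed once one has the exact sequence above. Without the exact sequence (or an equivalent explicit comparison of composition factors), the ``only if'' direction does not go through.
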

\begin{proof}
Set $L_1=\cL(\Delta(\lambda),L(\lambda_1))$ and~$L_2=\cL(\Delta(\lambda),L(\lambda_2))$, so~$\Lann L_1=J(\lambda_1)$ and~$\Lann L_2=J(\lambda_2)$ by Corollary \ref{corMM}(1). Theorem~\ref{thmVogan} thus implies that~$J(\lambda_1)\subset J(\lambda_2)$ if and only if there is a $V\in \cF$ such that 
$L_2$ is a subquotient of~$L_1\otimes V_{\times}$. 

The claim is therefore reduced to the claim that there exists~$V\in \cF$ such that
$$L_2\cong\cL(\Delta(\lambda),L(\lambda_2))\quad\mbox{is a subquotient of}\quad \cL(V\otimes\Delta(\lambda),L(\lambda_1))$$
if and only if
$$L(\lambda_2)\quad\mbox{is a subquotient of} \quad G_{s_1}G_{s_2}\cdots G_{s_k} L(\lambda_1),$$
for some simple reflections $s_1,s_2,\cdots,s_k\in W$.

To prove the `if' part it suffices to consider one simple reflection~$s_1$, the full statement follows by iteration. Assume that~$L(\lambda_2)$ is a subquotient of 
$$G_{s_1}L(\lambda_1)=\cL(\Delta(s_1\cdot\lambda),L(\lambda_1))\otimes_U \Delta(\lambda).$$
By Theorem~\ref{thmMM}, this is equivalent to the property that~$L_2$ is a subquotient of the bimodule~$\cL(\Delta(s_1\cdot\lambda),L(\lambda_1))$. Proposition~\ref{BGclass} implies that there exists some indecomposable projective functor $\theta_{s_1}$ such $P(s_1\cdot\lambda_1)\cong \theta_{s_1}\Delta(\lambda_1)$. Proposition~\ref{propGor} and Lemma~\ref{LemGor} allow to obtain the Verma filtration of $P(s_1\cdot\lambda)$ from the $\fg_{\oa}$-case, see e.g. \cite[2.9, Chapter 12]{Jantzen}. This implies there is a short exact sequence
$$0\to \Delta(\lambda)\to \theta_{s_1} \Delta(\lambda)\to \Delta(s_1\cdot\lambda)\to 0,$$
leading to the exact sequence
$$0\to \cL(\Delta(s_1\cdot\lambda),L(\lambda_1))\to \cL(\theta_{s_1}\Delta(\lambda),L(\lambda_1))\to L_1.$$
This shows that~$L_2$ is a subquotient of $\cL(\theta_{s_1}\Delta(\lambda),L(\lambda_1))$ which must itself be a subquotient of some~$\cL(V\otimes\Delta(\lambda),L(\lambda_1))$, which is what needed to be proved.

Now we prove the `only if' part. By Proposition~\ref{BGclass} every indecomposable projective functor on~$\fg\mbox{-mod}_{\chi}$ is a direct summand of some composition of certain~$\theta_{s_i}$. We can therefore work iteratively as above to prove the implication.
\end{proof}

\subsection{}\label{ssimple}Now we prove that~$G_s$ on~$\cO$ is right adjoint to $T_s$ of \ref{IntroTw}. For Lie algebras, this was proved in \cite[Theorem~3]{Khomenko}. Furthermore, we also prove that~$G_s\cong \dd T_s \dd$, as a generalisation of \cite[Theorem~4.1]{AS}.
Note that~$s\in W_\Lambda$ might not be simple in~$W$, so $T_s$ is defined by a reduced expression of $s$ in the Coxeter group $W$, as in \cite[Lemma~5.3]{CM1}. 
\begin{thm}\label{TsGs}
Consider the functors $T_s$ and~$G_s$ on~$\cO^\Lambda$. The functor $G_s$ is right adjoint to~$T_s$ and~$G_s\cong \dd T_s \dd$.
\end{thm}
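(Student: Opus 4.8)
The plan is to reduce both statements to the case of the reductive Lie algebra $\fg_{\oa}$ via Proposition~\ref{propcompare}, where the corresponding facts are already available in the literature: the adjunction $(T^0_s, G^0_s)$ is \cite[Theorem~3]{Khomenko}, and $G^0_s\cong\dd T^0_s\dd$ is \cite[Theorem~4.1]{AS}. To invoke Proposition~\ref{propcompare}, I must verify its three hypotheses for the pairs of functors under consideration.

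First I would treat the isomorphism $G_s\cong\dd T_s\dd$. Both $G_s$ and $\dd T_s\dd$ are left exact: $G_s$ is left exact by construction, and $\dd T_s\dd$ is left exact since $T_s$ is right exact (see \ref{IntroTw}) and $\dd$ is an exact contravariant functor. For the functorial commutation with projective functors, recall that $T_s$ commutes with tensoring by finite-dimensional modules (this is implicit in the braid group action of \cite{CM1}, and follows from \eqref{eqTRI} together with the analogous property for $T^0_s$), hence so does $\dd T_s\dd$ since $\dd(V\otimes M)\cong V^\ast\otimes\dd M$ and projective functors are summands of such tensorings; for $G_s$ one can argue directly from the definition $G_s=\cL(\Delta(s\cdot\lambda),-)\otimes_U\Delta(\lambda)$ using that $\cL(-,-)$ is compatible with tensoring by finite-dimensional modules and Theorem~\ref{thmMM}, or alternatively deduce it a posteriori once the adjunction with $T_s$ is known. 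Finally, on strongly typical regular blocks, Lemma~\ref{lemRes}(3) identifies $G_s$ with $G^0_s$ under Proposition~\ref{propGor}, and the analogue of Lemma~\ref{lemRes}(3) for $T_s$ is \eqref{eqTRI}; since $\dd$ also intertwines under the equivalence, $\dd T_s\dd$ is identified with $\dd T^0_s\dd\cong G^0_s$. Thus all three hypotheses of Proposition~\ref{propcompare} hold and $G_s\cong\dd T_s\dd$ follows.

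For the adjunction, the cleanest route is to use $G_s\cong\dd T_s\dd$ just established, together with the fact that $\dd$ is a (contravariant) self-duality on $\cO^\Lambda$: for any contravariant duality $\dd$ and any functor $F$ with left adjoint $L$, the functor $\dd F\dd$ is right adjoint to $\dd L\dd$. Since $T_s$ restricted to a suitable setting has known adjunction properties on the even part, I would instead argue as follows. One knows on $\cO(\fg_{\oa},\fb_{\oa})$ that $G^0_s$ is right adjoint to $T^0_s$. Using \eqref{eqTRI}, Lemma~\ref{lemRes}(1)--(2), and the fact that $\res$ is biadjoint to $\ind$ (see \ref{Prel111}), one transports the adjunction unit and counit from the even case: for $M,N\in\cO^\Lambda$, the adjunction isomorphism $\Hom_{\cO^\Lambda}(T_sM,N)\cong\Hom_{\cO^\Lambda}(M,G_sN)$ can be checked after applying $\res$, where it becomes the even-case adjunction, provided one checks naturality and that the isomorphism descends (the subtlety being that $\res$ is faithful but not full, so one needs the unit/counit to be defined intrinsically on $\cO^\Lambda$ and only checked for isomorphism-status after $\res$). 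Alternatively, and more in keeping with Proposition~\ref{propcompare}: the two functors $\Hom_{\cO^\Lambda}(T_s-,?)$ and $\Hom_{\cO^\Lambda}(-,G_s?)$ are both representable-type bifunctors that agree on strongly typical regular blocks and are compatible with projective functors, so a bifunctorial version of the comparison argument forces them to agree.

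The main obstacle I anticipate is the commutation of $G_s$ with projective functors: unlike $T_s$, whose interaction with $V\otimes-$ is encoded in the braid-group formalism of \cite{CM1}, $G_s$ is defined through the bimodule functor $\cL(\Delta(s\cdot\lambda),-)$ and the tensor functor $-\otimes_U\Delta(\lambda)$, and one must check that tensoring by a finite-dimensional $V$ can be pushed past both. The identity $V\otimes(-\otimes_U\Delta(\lambda))\cong(-\otimes_{\UU}({}_\times V\otimes U))\otimes_U\Delta(\lambda)$ together with compatibility of $\cL(\Delta(s\cdot\lambda),-)$ with the $\UU$-action is the technical heart; this is precisely the kind of computation that \ref{LLL}--Lemma~\ref{LemLLL} and \eqref{EU} are set up to handle, so I expect it to go through, but it is where the real work lies. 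Once this is in hand, both statements of Theorem~\ref{TsGs} follow formally.
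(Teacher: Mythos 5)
Your overall strategy (reduce to the strongly typical regular case via Proposition~\ref{propcompare}, then invoke the Lie algebra results of Khomenko--Mazorchuk and Andersen--Stroppel) is the right one, and you correctly identify the crux: to apply Proposition~\ref{propcompare} to $G_s$ one must know that $G_s$ commutes with projective functors. But you never actually establish this; you only say you ``expect it to go through,'' and your fallback of deducing it \emph{a posteriori} from the adjunction is circular, since you also propose to derive the adjunction from the isomorphism $G_s\cong\dd T_s\dd$. Your two suggested routes to the adjunction itself are also not sound as stated: transporting the unit and counit along $\res$ fails for exactly the reason you flag ($\res$ is faithful but not full, so the even maps need not lift to $\fg$-morphisms), and the ``bifunctorial version'' of Proposition~\ref{propcompare} is not established anywhere in the paper. (As a side remark, your formal adjunction-and-duality lemma is stated backwards: if $L\dashv F$ then $\dd F\dd\dashv\dd L\dd$, not the other way around; but since you abandon that route it does not affect the argument.)

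The missing idea — and the trick the paper actually uses — is to avoid ever proving directly that $G_s$ commutes with projective functors. Instead introduce the \emph{abstract} right adjoint $G'_s$ of $T_s$, which exists because $T_s$ is right exact on a finite-length category (see the argument in \cite[Section~4]{AS}). Since $T_s$ commutes with projective functors, so does $G'_s$, by \cite[Lemma~5.9]{CM1}; and $G'_s$ is automatically left exact and preserves central characters. Now Proposition~\ref{propcompare} can be applied twice: once to the right exact pair $T_s$ and $\dd G'_s\dd$, which agree on strongly typical regular blocks by \eqref{eqTRI}, Proposition~\ref{propGor} and \cite[Theorem~4.1]{AS}, giving $G'_s\cong\dd T_s\dd$; and once to the left exact pair $G_s$ and $G'_s$, which agree on strongly typical regular blocks by Lemma~\ref{lemRes}(3) and \cite[Theorem~3]{Khomenko}, giving $G_s\cong G'_s$. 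Both claims of the theorem then follow, and the commutation of $G_s$ with projective functors — which you anticipated as ``the real work'' — is obtained for free rather than by the direct bimodule computation you sketch.
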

\begin{proof}
The right exact functor $T_s$ admits a right adjoint which is a left exact endofunctor of~$\cO^\Lambda$, as argued in \cite[Section~4]{AS}. We denote this functor by~$G_s'$. The functors $T_s$ and $G_s'$ commute with projective functors by \cite[Lemma~5.9]{CM1}. The functors $T_s$ and $G_s'$ restrict to endofunctors of $\cO_{\chi'}$ for any central character $\chi'$ see \ref{IntroTw}. By equation \eqref{eqTRI} and Proposition~\ref{propGor}, for a strongly typical regular block, the functors $T_s$ and~$\dd G_s'\dd$ are the image of the corresponding functors for $\fg_{\oa}$ under the equivalence of categories. On such a block they are hence equivalent by \cite[Theorem~4.1]{AS}. The fact that~$G_s'\cong \dd T_s\dd$ on arbitrary blocks then follows from Proposition~\ref{propcompare}.

By construction, the left exact functors $G'_s$ and~$G_s$ commute with projective functors. Now on a strongly typical regular block $G_s$ and~$G_s'$ are isomorphic, by Lemma~\ref{lemRes}(3) and \cite[Theorem~3]{Khomenko}. Their equivalence hence again follows from Proposition~\ref{propcompare}. 
\end{proof}

\begin{cor}
The completion functors $G_t$ on $\cO^{\Lambda_0}$ for all simple reflections $t\in W$ satisfy the braid relations of $\cB_W$, up to isomorphism.
\end{cor}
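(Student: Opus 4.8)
The plan is to transfer the braid relations from the Lie algebra setting, where they are known for the twisting functors $T_t^0$ (and hence for the completion functors $G_t^0$ by Theorem~\ref{TsGs} applied in the classical case, or directly by the literature on Enright completions), to the superalgebra setting via the two compatibilities already at our disposal. The two key inputs are: (i) Theorem~\ref{TsGs}, which identifies $G_t$ with $\dd T_t \dd$ on every block $\cO^\Lambda$; and (ii) the fact, recorded in \ref{IntroTw}, that the super twisting functors $T_s$ correspond to categorical braid group actions by \cite{CM1} — concretely, \cite[Proposition~5.11 and the surrounding discussion]{CM1} establishes that the derived functors $\cL T_t$ satisfy the braid relations of $\cB_W$ on $\cD^b(\cO_\chi)$. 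Since $\dd$ is an exact involution, conjugating a braid relation among the $T_t$ by $\dd$ yields the same braid relation among the $G_t = \dd T_t \dd$, so the corollary would follow immediately \emph{at the derived level}. The remaining issue is that $G_t$ is left exact but not exact, so we must argue the braid relations already hold for the underived functors on the abelian category $\cO^{\Lambda_0}$, not merely for their derived versions.

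First I would reduce, using \cite[Lemma~5.3]{CM1}, the statement for a general simple reflection to compositions of the functors attached to reflections that are simple in $W$ itself; the braid relations to be checked are the length-$2$ relations $G_tG_u\cdots = G_uG_t\cdots$ ($m_{tu}$ factors) for pairs of simple reflections $t,u\in W$, since these generate all relations in $\cB_W$. For such a relation, I would invoke Proposition~\ref{propcompare}: both sides are compositions of left exact functors that functorially commute with projective functors (twisting functors do by \cite[Lemma~5.9]{CM1}, hence so do the $G_t\cong\dd T_t\dd$, and this property is closed under composition), both are left exact (a composition of left exact functors is left exact), and they agree on strongly typical regular blocks. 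The agreement on such a block is exactly where the classical result enters: on a strongly typical regular block the equivalence of Proposition~\ref{propGor} together with Lemma~\ref{lemRes}(3) identifies each super $G_t$ with the classical $G_t^0$ on the corresponding block of $\cO(\fg_{\oa},\fb_{\oa})$, and the classical $G_t^0$ satisfy the braid relations — this is classical, e.g. via $G_t^0\cong\dd T_t^0\dd$ and the braid relations for classical twisting functors in \cite{AS, shuff, braid}, or directly for Enright completions in \cite{Joseph2, Khomenko}. Hence Proposition~\ref{propcompare} forces $G_tG_u\cdots \cong G_uG_t\cdots$ on every block $\cO^\Lambda$, and in particular on $\cO^{\Lambda_0}$.

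The main obstacle I anticipate is making sure Proposition~\ref{propcompare} genuinely applies to compositions: one must verify that an $m$-fold composition of the $G_t$ still functorially commutes with all projective functors (clear, by iterating \cite[Lemma~5.9]{CM1} through the identification $G_t\cong\dd T_t\dd$) and is still left exact (clear), and — more delicately — that condition (3) of that proposition, agreement on strongly typical regular \emph{blocks}, is available for each composition rather than only for single functors. Since each $G_t$ individually is interchanged with $G_t^0$ under the fixed equivalence of Proposition~\ref{propGor} (Lemma~\ref{lemRes}(3)), and that equivalence is a fixed equivalence of the block not depending on $t$, the composition $G_{t}G_{u}\cdots$ is interchanged with $G_t^0 G_u^0\cdots$; so condition (3) reduces cleanly to the classical braid relation. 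No other subtlety should arise, because Proposition~\ref{propcompare} was precisely designed to upgrade block-by-block (indeed just strongly-typical-regular-block) identities to identities of functors on all of $\cO$. I would therefore present the corollary's proof in two short steps: (a) recall $G_t\cong\dd T_t\dd$ and that the classical $G_t^0$ satisfy braid relations, so the two sides of each braid relation agree on strongly typical regular blocks; (b) apply Proposition~\ref{propcompare} to conclude.
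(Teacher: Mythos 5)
Your argument is correct, but it takes a longer route than the paper, which is essentially a two-line citation. The paper invokes \cite[Lemma~5.3]{CM1}, which already establishes the braid relations for the super twisting functors $T_s$ at the abelian level (this is precisely the well-definedness of $T_w$, $w\in W$, via reduced expressions, and is how the paper gives meaning to $T_s$ for $s\in W_\Lambda$ not simple in $W$), and then conjugates by the exact involution $\dd$ using $G_s\cong\dd T_s\dd$ from Theorem~\ref{TsGs}. Your concern that \cite{CM1} only supplies the braid relations at the derived level stems from looking at Proposition~5.11 there rather than Lemma~5.3, so the extra argument you supply is not actually needed. Nevertheless, what you build is a valid and self-contained alternative: the compositions $G_tG_u\cdots$ and $G_uG_t\cdots$ are both left exact, both functorially commute with projective functors (closure under composition is immediate), and agree on strongly typical regular blocks by Proposition~\ref{propGor}, Lemma~\ref{lemRes}(3) and the classical braid relations for completion functors; Proposition~\ref{propcompare} then forces the braid relation on all of $\cO$. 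In effect you re-derive \cite[Lemma~5.3]{CM1} (in its $G$-incarnation) via this paper's reduction-to-typical machinery, which trades the external citation for a slightly longer argument internal to the paper. One small slip: the opening ``reduction via \cite[Lemma~5.3]{CM1} to reflections simple in $W$'' is a no-op here, since the corollary already concerns simple reflections $t\in W$ acting on $\cO^{\Lambda_0}$ (where $W_{\Lambda_0}=W$).
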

\begin{proof}
This is a consequence of \cite[Lemma~5.3]{CM1} and Theorem~\ref{TsGs}.
\end{proof}

\begin{cor}\label{CorNew}
Consider modules $M,N\in \cO$ which are both $s$-free, then
\begin{enumerate}
\item $\Hom_{\cO}(T_s M, \dd N)\;\cong\; \Hom_{\cO}(T_s N,\dd M);$
\item $\Ext^k_{\cO}(T_s M, T_s N)\;\cong\; \Ext^k_{\cO}(M,N)$ for all $k\in\mN$.
\end{enumerate}
\end{cor}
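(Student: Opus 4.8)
The plan is to deduce both parts from the two properties of $G_s$ just established in Theorem~\ref{TsGs}, namely that $G_s$ is right adjoint to $T_s$ and that $G_s\cong\dd T_s\dd$, together with the vanishing statements in \eqref{eqLT} for $s$-free modules. First I would prove part (1). Since $N$ is $s$-free, $\cL_1T_sN=0$ by \eqref{eqLT}, so on the full subcategory of $s$-free modules the functor $T_s$ is exact in the appropriate sense; more to the point, for $s$-free $N$ we have $G_s\dd N\cong \dd T_s\dd\dd N=\dd T_sN$. Hence
\begin{equation*}
\Hom_{\cO}(T_sM,\dd N)\;\cong\;\Hom_{\cO}(M,G_s\dd N)\;\cong\;\Hom_{\cO}(M,\dd T_sN)\;\cong\;\Hom_{\cO}(T_sN,\dd M),
\end{equation*}
where the first isomorphism is the adjunction $(T_s,G_s)$, the second is $G_s\dd N\cong\dd T_sN$, and the third applies the exact contravariant duality $\dd$ and uses $\dd\dd\cong\id$. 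This proves (1).

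For part (2) the natural route is to use the derived adjunction between $\cL T_s$ and $\RR G_s$ on $\cD^b(\cO^\Lambda)$. From the adjoint pair $(T_s,G_s)$ with $T_s$ right exact and $G_s$ left exact, one obtains an adjunction $(\LL T_s,\RR G_s)$ of the total derived functors, hence for all $k$
\begin{equation*}
\Ext^k_{\cO}(T_sM,T_sN)\;\cong\;\Hom_{\cD^b(\cO)}(\LL T_sM, (T_sN)[k])\;\cong\;\Hom_{\cD^b(\cO)}(M,\RR G_s\,T_sN[k]).
\end{equation*}
Here I use that $M$ is $s$-free, so $\cL_1T_sM=0$ and $\cL T_sM$ is represented by the single object $T_sM$ (no higher terms), which is what lets me replace $\LL T_sM$ by $T_sM$ on the left of the Hom. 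The task is then to show $\RR G_s\,T_sN\cong N$ in $\cD^b(\cO)$ when $N$ is $s$-free. By \cite[Proposition~5.11]{CM1}, $\cL T_s$ is an auto-equivalence of $\cD^b(\cO_\chi)$; its inverse is $\RR G_s$ precisely because $(\LL T_s,\RR G_s)$ is an adjoint pair and an adjoint of an equivalence is a quasi-inverse. Therefore $\RR G_s\,\LL T_sN\cong N$ for every $N$, and since $N$ is $s$-free this reads $\RR G_s\,T_sN\cong N$. Combining, $\Ext^k_{\cO}(T_sM,T_sN)\cong\Hom_{\cD^b(\cO)}(M,N[k])\cong\Ext^k_{\cO}(M,N)$.

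The main obstacle I anticipate is the bookkeeping that turns the underived adjunction $(T_s,G_s)$ into the derived adjunction $(\LL T_s,\RR G_s)$ and identifies $\RR G_s$ as the quasi-inverse of $\cL T_s$: one needs $T_s$ to send a suitable class of acyclic/projective-like objects to $G_s$-acyclics (or, dually, work with the $\dd$-image using injectives), so that the composite of derived functors behaves correctly. Given that $\cL T_s$ is already known to be an equivalence of $\cD^b(\cO_\chi)$ with $\cL_kT_s=0$ for $k>1$, and that $G_s\cong\dd T_s\dd$ makes $\RR G_s\cong\dd\circ\LL T_s\circ\dd$ an equivalence as well, this identification is essentially forced; the only real care is checking that the adjunction unit/counit are quasi-isomorphisms, for which it suffices to test on $s$-free objects using \eqref{eqLT}. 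Once that is in place, both statements follow formally, and part (1) is just the degree-zero shadow of part (2) read through $\dd$.
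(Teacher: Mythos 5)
Your proof is correct, and your route through part (1) is in fact a little cleaner than the paper's. The paper passes to $\cD^b(\cO)$ for both parts, replacing $T_sM$ by $\cL T_sM$ using that $M$ is $s$-free, then invoking that $\cL T_s$ is an auto-equivalence together with $G_s\cong\dd T_s\dd$, and finally using $s$-freeness of $N$ to come back down to $\cO$. You instead stay in $\cO$ for part (1) and use the underived adjunction $(T_s,G_s)$ directly: $\Hom(T_sM,\dd N)\cong\Hom(M,G_s\dd N)\cong\Hom(M,\dd T_sN)\cong\Hom(T_sN,\dd M)$. Note that this chain never uses that $M$ or $N$ is $s$-free, so your argument actually establishes part (1) without that hypothesis; your preliminary remark that $G_s\dd N\cong\dd T_sN$ follows from $\cL_1T_sN=0$ is a small red herring --- it is simply $G_s\cong\dd T_s\dd$ together with $\dd\dd\cong\id$, independent of any vanishing. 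For part (2) your argument is essentially the paper's (which is stated only as ``follows similarly''): pass to $\cD^b(\cO)$, use that $\RR G_s$ is quasi-inverse to $\LL T_s$ since $\cL T_s$ is an equivalence, and use \eqref{eqLT} to identify $\LL T_sM$ with $T_sM$ and $\RR G_s\,T_sN$ with $N$ when $M$ and $N$ are $s$-free. The bookkeeping you flag as a potential obstacle (passing from the underived adjoint pair to the derived one, and identifying the right adjoint of an equivalence as its quasi-inverse) is standard and already implicit in the paper's one-line treatment.
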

\begin{proof}
We work in the bounded derived category $\cD^b(\cO)$. By equation \eqref{eqLT} we have
$$\Hom_{\cO}(T_s M, \dd N)\;\cong\; \Hom_{\cD^b(\cO)}(\cL T_s M,\dd N).$$
As $\cL T_s$ is an auto-equivalence of $\cD^b(\cO)$, see \ref{IntroTw}, by Theorem \ref{TsGs} we have
$$\Hom_{\cD^b(\cO)}(\cL T_s M,\dd N)\;\cong\; \Hom_{\cD^b(\cO)}( M,\dd\cL T_s N).$$
Hence part (1) follows from applying equation \eqref{eqLT} again.

Part (2) follows similarly from the properties in \ref{IntroTw}.
\end{proof}

\subsection{} Khomenko and Mazorchuk proved in \cite[Theorems~8 and~10]{Khomenko} that the completion functor, resp. twisting functor, can be realised as a {\em partial approximation}, resp. {\em partial coapproximation}, functor. For any category~$\cO_\chi^\Lambda$ and a subset $\Upsilon$ of $\Lambda^\chi$ we can mimic the procedure in \cite[Section~2.5]{Khomenko}. We define functors 
$$\mathfrak{b}_\Upsilon:\;\,\cO_\chi^\Lambda\,\to\,\cO_\chi^\Lambda\quad\mbox{and}\quad\mathfrak{d}_\Upsilon:\;\,\cO_\chi^\Lambda\,\to\,\cO_\chi^\Lambda,$$
where~$\mathfrak{b}_\Upsilon$ takes the quotient modulo the maximal submodule which does not contain simple subquotients of type~$\Upsilon$. The~$\Upsilon$-partial approximation functor~$\mathfrak{d}_\Upsilon$
is defined as first taking the maximal coextension with simple subquotients not of type~$\Upsilon$, followed by the functor~$\mathfrak{b}_{\Upsilon}$. The first map in the procedure to define~$\mathfrak{d}_{\Upsilon}$ is not a functor, {\it viz.} not well-defined on morphisms. On a module~$M$, this first map is explicitly defined as the submodule of its injective hull $I_M$ defined by taking the kernel of all morphisms $\alpha$ from $I_M$ to injective hulls of simple modules of type~$\Upsilon$ such that~$\alpha\circ\imath=0$ for $\imath: M\hookrightarrow I_M$. Note that all modules in~$\cO$ have finite length. The arguments in \cite[Section~2.5]{Khomenko} then imply that the composition~$\mathfrak{d}_{\Upsilon}$ is a well-defined functor. The dual construction gives its left adjoint functor $\tilde{\mathfrak{d}}_{\Upsilon}$, which is called the~$\Upsilon$-partial coapproximation functor.
\begin{prop}\label{propKM}
Set
$$\Upsilon=\{\mu\in \Lambda^\chi\,|\, L(\mu)\mbox{ is $s$-free}\},$$
we have isomorphisms of functors $T_s\cong \tilde{\mathfrak{d}}_{\Upsilon}$ and~$G_s\cong \mathfrak{d}_{\Upsilon}$ on~$\cO^\Lambda_\chi$.
\end{prop}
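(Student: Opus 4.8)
The plan is to reduce the statement, once again, to the case of a reductive Lie algebra, where it is exactly \cite[Theorems~8 and~10]{Khomenko}, using the comparison machinery (Proposition~\ref{propcompare}) together with the behaviour of all the functors involved under $\res$, $\ind$ and the Gorelik equivalence (Proposition~\ref{propGor}, Lemma~\ref{LemGor}, Lemma~\ref{lemRes}). So the first step is to check that the partial (co)approximation functors $\mathfrak{d}_\Upsilon$ and $\tilde{\mathfrak{d}}_\Upsilon$ satisfy the three hypotheses of Proposition~\ref{propcompare}: namely that each is left (resp.\ right) exact, that each functorially commutes with projective functors, and that on a strongly typical regular block each is intertwined with the corresponding functor for $\fg_{\oa}$ under the equivalence of Proposition~\ref{propGor}. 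Exactness is built into the definition of $\mathfrak{d}_\Upsilon$ (left exact, being a composition in which the only non-functorial piece is a left-exact-in-spirit maximal-coextension construction that is repaired as in \cite[Section~2.5]{Khomenko}) and dually for $\tilde{\mathfrak{d}}_\Upsilon$; this is essentially a restatement of what is recalled in the paragraph preceding the proposition.

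The second step — and the one I expect to carry the real weight — is the commutation with projective functors. For Lie algebras this is part of the Khomenko--Mazorchuk analysis; in the super setting one wants: for $V\in\cF$ and the set $\Upsilon$ as defined (which is a union of $W_\Lambda$-``cells'' type subset determined by $s$-freeness), $V\otimes \mathfrak{d}_\Upsilon M\cong \mathfrak{d}_\Upsilon(V\otimes M)$ and similarly for $\tilde{\mathfrak d}_\Upsilon$. The key point is that tensoring with a finite-dimensional module is exact and sends the injective hull $I_M$ to an injective module whose indecomposable summands are injective hulls of simples, and it carries simples of type $\Upsilon$ to modules all of whose composition factors are again of type $\Upsilon$ (because $s$-freeness of $L(\mu)$ is detected by the $s$-action and the central-character/block combinatorics controlling $V\otimes L(\mu)$ keeps us inside the $s$-free locus when we started there — this is exactly why $\Upsilon$ was chosen as a whole sub-poset and not a single weight). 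Hence the maximal submodule without $\Upsilon$-subquotients, the maximal $\Upsilon^c$-coextension, and the functor $\mathfrak b_\Upsilon$ are all preserved under $V\otimes-$. One should phrase this carefully using that $V\otimes-$ has an exact biadjoint $V^\ast\otimes-$, so it preserves both injective hulls and kernels of the relevant morphism-systems. I would model the argument on \cite[Section~2.5]{Khomenko} verbatim, noting at each line that the only input needed is exactness of the projective functor and stability of $\Upsilon$ under its composition factors; no feature special to Lie algebras intervenes.

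The third step is the strongly typical regular block comparison. On such a block $\cO^\Lambda_\chi$, Proposition~\ref{propGor} together with Lemma~\ref{LemGor} identifies $\cO^\Lambda_\chi$ with $\cO^\Lambda_{\chi^0}(\fg_{\oa},\fb_{\oa})$ taking $L(x\cdot\lambda)\mapsto L_0(x\circ\lambda')$, $\Delta\mapsto\Delta_0$, $P\mapsto P_0$, and hence also injective hulls to injective hulls (apply the simple-preserving duality $\dd$, which the equivalence also intertwines). Under such an equivalence the subset $\Upsilon$ is carried to the analogous subset $\Upsilon^0$ for $\fg_{\oa}$ (both are described by the same combinatorial condition ``$s$-free'', and $s$-freeness of $L(x\cdot\lambda)$ is equivalent to $s$-freeness of $L_0(x\circ\lambda')$ by \cite[Lemma~2.1]{CM1} and equation \eqref{eqTRI}), so $\mathfrak d_\Upsilon$ and $\tilde{\mathfrak d}_\Upsilon$ go to $\mathfrak d^0_{\Upsilon^0}$ and $\tilde{\mathfrak d}^0_{\Upsilon^0}$; the latter are $G^0_s$ and $T^0_s$ by \cite[Theorems~8 and~10]{Khomenko}. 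On the other hand, by Lemma~\ref{lemRes}(3) and the corresponding statement for twisting functors (equation \eqref{eqTRI} and \cite[Proposition~5.11]{CM1}), $G_s$ and $T_s$ on this block are also the images of $G^0_s$ and $T^0_s$. Thus on strongly typical regular blocks $T_s\cong\tilde{\mathfrak d}_\Upsilon$ and $G_s\cong\mathfrak d_\Upsilon$. With the three hypotheses of Proposition~\ref{propcompare} in hand (exactness, commutation with projective functors, agreement on strongly typical regular blocks), Proposition~\ref{propcompare} yields $T_s\cong\tilde{\mathfrak d}_\Upsilon$ and $G_s\cong\mathfrak d_\Upsilon$ on all of $\cO^\Lambda_\chi$, as claimed. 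The only genuine obstacle is making the commutation-with-projective-functors argument of step two fully rigorous for the not-necessarily-functorial first stage in the definition of $\mathfrak d_\Upsilon$; this is handled exactly as in \cite[Section~2.5]{Khomenko}, invoking that all modules in $\cO$ have finite length so that the relevant universal constructions exist and behave functorially after passing to the composite.
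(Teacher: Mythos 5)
Your overall architecture is the same as the paper's: verify the hypotheses of Proposition~\ref{propcompare} for $\mathfrak{d}_\Upsilon$ and $\tilde{\mathfrak d}_\Upsilon$ (exactness, commutation with projective functors, agreement on strongly typical regular blocks via Proposition~\ref{propGor}, Lemma~\ref{LemGor} and Lemma~\ref{lemRes}(3)), then invoke \cite[Theorems~8 and~10]{Khomenko} on the $\fg_{\oa}$ side. The paper's proof is exactly this, citing \cite[Corollary~2]{Khomenko} for exactness and \cite[Lemma~12]{Khomenko} for commutation with projective functors, and observing that the Gorelik equivalence carries $s$-free/$s$-finite simples to $s$-free/$s$-finite simples so that the partial (co)approximation functors on the two sides correspond.

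However, your attempted unpacking of the commutation-with-projective-functors step contains a genuine error. You assert that a projective functor ``carries simples of type $\Upsilon$ to modules all of whose composition factors are again of type $\Upsilon$,'' i.e.\ that tensoring an $s$-free simple with $V\in\cF$ yields only $s$-free composition factors. This is false already for $\mathfrak{sl}(2)$: if $\lambda$ is dominant regular then $L(s\cdot\lambda)=\Delta(s\cdot\lambda)$ is $s$-free, but for a suitable finite-dimensional $V$ the module $V\otimes\Delta(s\cdot\lambda)$ has $L(\lambda)$, which is $s$-finite, among its composition factors. Tensoring with a finite-dimensional module \emph{does} preserve $s$-finiteness (local finiteness of $\fg_{-\alpha}$ is inherited by tensor products with finite-dimensional modules), but it does not preserve $s$-freeness, and in fact neither statement on its own yields a one-line proof that $\mathfrak{b}_\Upsilon$ or the coextension step commutes with $V\otimes-$, since the socle of $V\otimes\mathfrak{b}_\Upsilon(M)$ may well contain $s$-finite simples even when the socle of $\mathfrak{b}_\Upsilon(M)$ does not. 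The correct argument is the one in \cite[Lemma~12]{Khomenko} and is more delicate than your sketch; since it is entirely formal (it uses only the biadjunction of projective functors and the structure of the construction, nothing special to Lie algebras), the right move is to cite it as the paper does rather than to reprove it from a false stability claim. With that correction, the rest of your reduction to \cite[Theorems~8 and~10]{Khomenko} through the strongly typical regular case goes through.
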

\begin{proof}
By construction, the partial (co)approximation functors commute with projective functors, see \cite[Lemma~12]{Khomenko}. Furthermore~$\tilde{\mathfrak{d}}_{\Upsilon}$ is right exact and~$\mathfrak{d}_{\Upsilon}$ left exact by \cite[Corollary~2]{Khomenko}. By proposition~\ref{propcompare} it hence suffices to prove the statements on a strongly typical regular block. 

For this we apply the equivalence of categories in Proposition~\ref{propGor}. This equivalence preserves the property of simple modules to be $s$-free or $s$-finite. Under the equivalence, the partial (co)approximation functors on $\cO_\chi(\fg,\fb_{\oa})_{\chi}$ are hence mapped to the corresponding partial (co)approximation functors on $\cO(\fg_{\oa},\fb_{\oa})_{\chi^0}$.

That $G_s$ is isomorphic to~$\mathfrak{d}_\Upsilon$ follows therefore from Lemma~\ref{lemRes}(3) and \cite[Theorem~8]{Khomenko}. The result for twisting can similarly be derived from equation \eqref{eqTRI} and the Lie algebra case in \cite[Theorem~10]{Khomenko}, or alternatively from Theorem \ref{TsGs} and the fact that $\widetilde{\mathfrak{d}}_\Upsilon$ is left adjoint to~$\mathfrak{d}_{\Upsilon}$, which can be proved identically as in \cite[Lemma~9]{Khomenko}.
\end{proof}

\subsection{} We could also have considered completion functors for arbitrary simple reflections $t$ in~$W$, rather than in~$W_\Lambda$. In case~$t\not\in W_\Lambda$, this would yield an equivalence~$\cO^\Lambda\;\tilde\to\; \cO^{t\cdot\Lambda}$, which sends simple modules to simple modules with the same annihilator ideal. We do not need to consider this, as this would just give Corollary \ref{corMM}(2) and moreover the corresponding concepts have already been worked out for twisting functors in \cite{CMW, CM1}.


\section{The Jantzen middle}\label{secAS}
In this section we will restrict to $\Lambda_0$, although the results could be extended to arbitrary~$\Lambda$. For~$\mathfrak{gl}(m|n)$, this is also immediate from the results in \cite{CMW}.

\subsection{}For~$\fg$ in \eqref{list}, a weight $\lambda\in \Lambda_0$ and a simple reflection~$s\in W$, we define the {\em Jantzen middle}~$U_s(\lambda)$ for~$L(\lambda)$ as the radical of~$T_sL(\lambda)$. Just as the set of simple modules, the set of Jantzen middles does not depend on $\fb$, only on $\fb_{\oa}$, but its labelling by $\Lambda_0$ does. Note that for Lie algebras, the usual definition of the Jantzen middle actually corresponds to the closely related radical of the shuffling functor of \cite{Carlin} acting on simple modules, see \cite{VoganKL}. The connection between both follows clearly from the construction in \cite[Appendix]{appendix}. For superalgebras this classical definition is not applicable.

By \cite[Theorem~5.12]{CM1} our definition of the Jantzen middle is either zero, if $L(\lambda)$ is $s$-finite, or fits into a short exact sequence
\begin{equation}
\label{Jmiddle}0\to U_s(\lambda)\to T_sL(\lambda)\to L(\lambda)\to 0,
\end{equation}
if $L(\lambda)$ is $s$-free. An alternative description of $U_s(\lambda)$ is given in the following proposition.
\begin{prop}\label{propU}
For $L(\lambda)$ $s$-free, the Jantzen middle~$U_s(\lambda)$ is the largest $s$-finite quotient of $\Rad P(\lambda)$.
\end{prop}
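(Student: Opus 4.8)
The plan is to relate $T_s L(\lambda)$ to projective covers via adjunction, using the fact (Theorem~\ref{TsGs}) that $G_s$ is right adjoint to $T_s$ together with the description $G_s \cong \dd T_s \dd$. First I would set up the general principle: since $T_s$ is right exact and $P(\lambda)$ is projective, $T_s$ applied to the short exact sequence $0 \to \Rad P(\lambda) \to P(\lambda) \to L(\lambda) \to 0$ gives a right-exact sequence $T_s \Rad P(\lambda) \to T_s P(\lambda) \to T_s L(\lambda) \to 0$. The key input is that $T_s P(\lambda)$ is projective — this follows because $T_s$ commutes with projective functors (\cite[Lemma~5.9]{CM1}) and $P(\lambda)$ is, up to summands, obtained by applying projective functors to a super dominant projective module (Lemma~\ref{generate}(2)), on which $T_s$ acts by \eqref{twistVer} turning the dominant Verma into a Verma again, hence still projective. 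Actually it is cleaner to argue that $T_s P(\lambda)$ has a Verma flag with top $T_s \Delta(\lambda)$ and is projective, so $T_s P(\lambda)$ is the projective cover of $L(\lambda)$ in its top — one must identify precisely which projective it is.

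The heart of the argument is to identify the image of $T_s \Rad P(\lambda) \to T_s P(\lambda)$ and to see that the cokernel $T_s L(\lambda)$ has radical equal to the largest $s$-finite quotient of $\Rad P(\lambda)$. My approach: because $T_s L(\lambda)$ fits into \eqref{Jmiddle} with $L(\lambda)$ as its (simple) top when $L(\lambda)$ is $s$-free, the surjection $T_s P(\lambda) \tto T_s L(\lambda)$ exhibits $T_s L(\lambda)$ as a quotient of the projective $T_s P(\lambda)$, and $U_s(\lambda) = \Rad T_s L(\lambda)$ is the image of $\Rad(T_s P(\lambda))$. I would then argue that $T_s$ kills $s$-finite modules and that $T_s$ is "insensitive" to $s$-finite sub/quotient structure in a controlled way — more precisely, use that $\cL_1 T_s$ vanishes on $s$-free modules and that any module decomposes its composition factors into $s$-free and $s$-finite ones. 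The cleanest route is probably via the adjunction: for any module $X$, $\Hom_\cO(T_s \Rad P(\lambda), \dd L(\mu)) \cong \Hom_\cO(\Rad P(\lambda), G_s \dd L(\mu)) = \Hom_\cO(\Rad P(\lambda), \dd T_s L(\mu))$, and $T_s L(\mu) = 0$ exactly when $L(\mu)$ is $s$-finite. This shows the $s$-finite composition factors of $\Rad P(\lambda)$ cannot appear in the top of $T_s \Rad P(\lambda)$'s relevant quotient — one must convert this vanishing-of-Hom statement into the claimed description of $U_s(\lambda)$ as the \emph{largest $s$-finite quotient} of $\Rad P(\lambda)$.

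Concretely, let $Q$ denote the largest $s$-finite quotient of $\Rad P(\lambda)$, i.e.\ $\Rad P(\lambda) \tto Q$ with $Q$ being $s$-finite and universal with this property (this exists since $\cO$ has finite length and $s$-finiteness is closed under extensions and quotients). I would produce a surjection $U_s(\lambda) \tto Q$ and an injection $Q \hookrightarrow U_s(\lambda)$, forcing equality. For the surjection: compose $T_s P(\lambda) \tto T_s L(\lambda)$ with the fact that $\Rad(T_s P(\lambda))$ maps onto $U_s(\lambda)$, and observe $U_s(\lambda)$ is $s$-finite (it is the radical of $T_s L(\lambda)$, whose only $s$-free composition factor is the top $L(\lambda)$, by Theorem~\ref{thmMM}-type arguments or by directly computing composition factors of $T_s L(\lambda)$ via the Verma picture and \cite[Theorem~5.12]{CM1}); hence the quotient map $\Rad P(\lambda) \to U_s(\lambda)$ factors through $Q$. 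For the reverse: the surjection $P(\lambda) \to L(\lambda)$ with the adjunction unit and $G_s \cong \dd T_s \dd$ gives a map $P(\lambda) \to G_s L(\lambda) = \dd T_s \dd L(\lambda)$, and chasing its restriction to $\Rad P(\lambda)$ together with the $s$-finiteness constraints produces the needed embedding of $Q$.

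The main obstacle I anticipate is the bookkeeping of composition factors: showing cleanly that $T_s L(\lambda)$ has no $s$-free composition factor other than its top $L(\lambda)$, and dually that applying $T_s$ to $\Rad P(\lambda)$ and then passing to the cokernel of $T_s \Rad P(\lambda) \to T_s P(\lambda)$ exactly collapses $\Rad P(\lambda)$ onto its largest $s$-finite quotient. This should follow from the exact sequences in \eqref{eqLT} and \eqref{Jmiddle} combined with a careful diagram chase in the right-exact sequence $T_s \Rad P(\lambda) \to T_s P(\lambda) \to T_s L(\lambda) \to 0$, using that $T_s P(\lambda)$ is projective with simple top $L(\lambda)$ (forcing $\Rad T_s P(\lambda) \to U_s(\lambda)$ to be the relevant comparison), but verifying the universality ("largest") clause rigorously rather than just producing $\emph{some}$ $s$-finite quotient is where the real care is needed.
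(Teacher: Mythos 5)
Your proposal takes a genuinely different route from the paper. The paper's proof is a one-liner: Proposition~\ref{propKM} identifies $T_s$ with the partial coapproximation functor $\tilde{\mathfrak{d}}_\Upsilon$ for $\Upsilon$ the set of $s$-free weights, and for $\lambda\in\Upsilon$ the very definition of $\tilde{\mathfrak{d}}_\Upsilon L(\lambda)$ is the quotient $P(\lambda)/N$ whose kernel of the projection to $L(\lambda)$ is the largest $\Upsilon$-free (that is, $s$-finite) quotient of $\Rad P(\lambda)$; the second step $\tilde{\mathfrak{b}}_\Upsilon$ is then the identity because the simple top $L(\lambda)$ is of type $\Upsilon$. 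So the proposition is essentially a restatement of the definition once Proposition~\ref{propKM} is available. You instead try a direct homological argument with the right-exact sequence $T_s\Rad P(\lambda)\to T_sP(\lambda)\to T_sL(\lambda)\to 0$ and the adjunction $T_s\dashv G_s$.

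There are, however, concrete gaps. First, your justification that $T_sP(\lambda)$ is projective is incorrect as written: for a super dominant $\mu$ one has $T_s\Delta(\mu)\cong\Delta(s\cdot\mu)$ by \eqref{twistVer}, and $\Delta(s\cdot\mu)$ is \emph{not} projective; applying a projective functor does not repair this. (The underlying claim is in fact true — indeed $T_sP(\lambda)\cong P(\lambda)$ when $L(\lambda)$ is $s$-free — but the clean way to see it is again via the partial coapproximation description, which you are deliberately avoiding.) Second, the comparison logic is reversed and incomplete: since $U_s(\lambda)$ is $s$-finite and a quotient of $\Rad P(\lambda)$, the universality of $Q$ gives a surjection $Q\tto U_s(\lambda)$, not $U_s(\lambda)\tto Q$ as you state; and two maps in opposite directions between finite-length modules do not by themselves force equality. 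Third, and most importantly, the genuine content of the proposition — that the kernel $K$ of $T_sP(\lambda)\tto T_sL(\lambda)$ has no $s$-finite simple quotients, so that $U_s(\lambda)$ really is the \emph{largest} $s$-finite quotient — is only gestured at. Carrying this out via your adjunction plan would require computing $\cL_1 T_s L(\mu)$ for $s$-finite $L(\mu)$ (one needs $\cL_1T_sL(\mu)\cong L(\mu)$), a fact not among those quoted in the paper's \eqref{eqLT}; as written, your argument does not establish the universal (``largest'') clause.
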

\begin{proof}
This is an immediate application of Proposition~\ref{propKM}.
\end{proof}

\subsection{} For~$\fg$ a semisimple Lie algebra, the statement that~$U_s(\lambda)$ is {\em semisimple} for all $s$ and for~$\lambda$ regular is equivalent to the statement that the KL conjecture of~\cite{KL} is true by \cite[Section~7]{AS}. This semisimplicity is thus very important but extremely difficult to prove.

For classical Lie superalgebras, it is an open question whether the Jantzen middle is always semisimple. In \cite[Corollary~5.14]{CM1}, the weaker claim that the top and socle of $U_s(\lambda)$ are isomorphic is proved. We start the investigation into the semisimplicilty of Jantzen middles for Lie superalgebras by proving the following connection with abstract KL theories and will use this to prove that the Jantzen middle is semisimple for~$\mathfrak{gl}(m|n)$.
\begin{thm}\label{thmAS}
Consider the highest weight category~$\cO^{\Lambda_0}_\chi(\fg,\fb)$ with notation as in Subsection \ref{secO}. Assume that~$\cO^{\Lambda_0}_\chi(\fg,\fb)$ admits a Kazhdan-Lusztig theory in the sense of~\cite[Definition~3.3]{CPS}. For any reflection~$s=s_\alpha\in W$ such that~$\alpha$ or $\alpha/2$ is simple in~$\Delta^+$
the Jantzen middle~$U_s(\lambda)$ is semisimple for any $\lambda\in\Lambda_0^\chi$.
\end{thm}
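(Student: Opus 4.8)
The plan is to reduce the problem to the underlying reductive Lie algebra $\fg_{\oa}$, where the semisimplicity of the Jantzen middle is exactly the homological content of the Kazhdan--Lusztig theory hypothesis via the criterion of Cline--Parshall--Scott in \cite{AS}. First I would restrict attention to a block $\cO^{\Lambda_0}_\chi$ with $\chi$ strongly typical and regular; by the translation/block-reduction machinery available in this setting --- concretely Corollary~\ref{corMM}(2), which produces equivalences $\cO^{\Lambda_0}_\chi \cong \cO^{x\cdot\Lambda_0}_\chi$ interchanging simple modules with equal annihilators, together with the compatibility of twisting functors with these equivalences (equation~\eqref{eqTRI} and \cite[Section~5]{CM1}) --- it suffices to prove semisimplicity of $U_s(\lambda)$ in a super dominant block. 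Then Proposition~\ref{propGor} and Lemma~\ref{lemRes} give an equivalence $\Phi:\cO^{\Lambda_0}_\chi(\fg,\fb_{\oa}) \cong \cO^{\Lambda_0}_{\chi^0}(\fg_{\oa},\fb_{\oa})$ under which $T_s$ corresponds to the Lie algebra twisting functor $T_s^0$, so $\Phi$ carries $U_s(\lambda)$ to the Jantzen middle $U_s^0(\lambda^0)$ for $\fg_{\oa}$.

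Next I would invoke the Lie algebra result directly: by \cite[Section~7]{AS} (see also \cite{VoganKL}), if the highest weight category $\cO^{\Lambda_0}_{\chi^0}(\fg_{\oa},\fb_{\oa})$ admits a Kazhdan--Lusztig theory in the sense of \cite[Definition~3.3]{CPS}, then every Jantzen middle $U_s^0(\mu)$ is semisimple. The one thing that needs care is the passage of the KL-theory hypothesis \emph{across} the equivalence $\Phi$ and across the block equivalences of Corollary~\ref{corMM}(2): since a Kazhdan--Lusztig theory is a purely categorical notion (it is a condition on $\Ext$-vanishing between standard/costandard objects and simples in appropriate degrees, matching a length/parity function on the poset), and all the equivalences in play are equivalences of highest weight categories sending standard modules to standard modules (Lemma~\ref{LemGor}, Lemma~\ref{lemRes}(3)), the hypothesis transports. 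So from $\cO^{\Lambda_0}_\chi(\fg,\fb)$ having a KL theory one deduces that $\cO^{\Lambda_0}_{\chi^0}(\fg_{\oa},\fb_{\oa})$ has one as well, whence $U_s^0$ is semisimple, whence $U_s(\lambda) = \Phi^{-1}(U_s^0(\lambda^0))$ is semisimple.

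The main obstacle is the bookkeeping in the block reduction for non--strongly-typical or singular $\chi$: the Jantzen middle's labelling genuinely depends on $\fb$, and the twisting functor for a reflection $s\in W$ that is simple only in some $W_\Lambda$ rather than in $W$ is itself defined through a reduced expression in $W$ (cf.\ \cite[Lemma~5.3]{CM1}). I would handle this by first checking that the short exact sequence~\eqref{Jmiddle} and Proposition~\ref{propU} are stable under the equivalences of Corollary~\ref{corMM}(2) --- which they are, since these equivalences commute with twisting up to isomorphism and preserve $s$-freeness --- so that semisimplicity of $U_s(\lambda)$ in the super dominant block forces it in every block with the same central character. The only genuine input beyond transport of structure is the CPS criterion itself, which we are entitled to cite; everything else is an exercise in moving the statement through the equivalences already established in Sections~\ref{secHC} and~\ref{secCompletion}.
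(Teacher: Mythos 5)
Your proposed reduction has a fatal gap. You want to move from an arbitrary central character $\chi$ to a strongly typical regular one and then apply Gorelik's equivalence (Proposition~\ref{propGor}) to land in a Lie algebra block. But the ``block-reduction machinery'' you invoke does not do this. Corollary~\ref{corMM}(2) provides equivalences $\cO^\Lambda \cong \cO^{x\cdot\Lambda}$ for $x\in W$; since the $\rho$-shifted Weyl group action preserves central characters, these equivalences permute blocks of the \emph{same} set of central characters. In particular an atypical block stays atypical under every such equivalence, and Gorelik's equivalence with a Lie algebra category is simply unavailable for an atypical $\chi$. The theorem is most interesting precisely in the atypical situation (that is where the application to $\mathfrak{gl}(m|n)$ in Theorem~\ref{thmKLgl} lives), so the strongly typical case that your argument actually handles is the uninteresting one, where the result is indeed an immediate corollary of the Lie algebra statement via Lemma~\ref{lemRes}.

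The paper's proof works intrinsically in the superalgebra highest weight category $\cO^{\Lambda_0}_\chi(\fg,\fb)$, with no reduction to $\fg_{\oa}$. The technical heart is Lemma~\ref{lemAS}: using the short exact sequence $0\to U_s(\lambda)\to T_sL(\lambda)\to L(\lambda)\to 0$ together with Corollary~\ref{CorNew} (which exploits that $\cL T_s$ is an autoequivalence of $\cD^b(\cO)$ and that $G_s\cong\dd T_s\dd$), one obtains embeddings
\[
\Hom_{\cO}(U_s(\lambda),\nabla(\mu))\hookrightarrow\Ext^1_{\cO}(\Delta(\mu),L(\lambda)),\qquad
\Hom_{\cO}(\Delta(\mu),U_s(\lambda))\hookrightarrow\Ext^1_{\cO}(\Delta(\mu),L(\lambda))
\]
whenever $L(\mu)$ is $s$-finite, while both $\Hom$ spaces vanish when $L(\mu)$ is $s$-free. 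The CPS length/parity function then forces any two weights $\mu_1,\mu_2$ with $\Hom_{\cO}(U_s(\lambda),\nabla(\mu_i))\not=0$ to have lengths of the same parity, whence $\Ext^1_{\cO}(\Delta(\mu_1),L(\mu_2))=0$, and dually. Feeding these vanishing statements into \cite[Theorem~4.1]{CPS} yields the semisimplicity. None of this can be outsourced to the Lie algebra, because the relevant $\Ext^1$ spaces live in a genuinely non-Lie-algebra block. If you want to salvage your outline, the step that needs a real argument rather than transport-of-structure is exactly an analogue of Lemma~\ref{lemAS} in the super category, and at that point you have reproduced the paper's proof.
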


We start the proof with the following lemma, for which we return to arbitrary category~$\cO$, {\it i.e.} without assumptions of KL theories.
\begin{lemma}\label{lemAS}
Consider a reflection~$s\in W$ such that~$s=s_\alpha$ for~$\alpha$ or $\alpha/2$ simple in~$\Delta^+$.
Choose~$\lambda\in\Lambda_0$ such that~$L(\lambda)$ is $s$-free ($\lambda\le s\cdot\lambda$) and some~$\mu\in\Lambda_0$ with~$\mu\not=s\cdot\lambda$.
\begin{enumerate}
\item If $L(\mu)$ is $s$-finite~$(\mu>s\cdot\mu)$, then there are inclusions
\begin{eqnarray*}
\Hom_{\cO}(U_s(\lambda),\nabla(\mu))&\hookrightarrow &\Ext^1_{\cO}(\Delta(\mu),L(\lambda));\\
\Hom_{\cO}(\Delta(\mu), U_s(\lambda))&\hookrightarrow& \Ext^1_{\cO}(\Delta(\mu),L(\lambda)).
\end{eqnarray*}
\item If $L(\mu)$ is $s$-free~$(\mu\le s\cdot\mu)$, then 
$$\Hom_{\cO}(U_s(\lambda),\nabla(\mu))=0=\Hom_{\cO}(\Delta(\mu),U_s(\lambda)).$$
\end{enumerate}
\end{lemma}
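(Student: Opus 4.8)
The plan is to exploit the short exact sequence \eqref{Jmiddle} together with the behaviour of twisting functors on Verma and dual Verma modules. Concretely, I would apply the functors $\Hom_{\cO}(-,\nabla(\mu))$ and $\Hom_{\cO}(\Delta(\mu),-)$ to \eqref{Jmiddle} and read off the relevant pieces of the long exact sequences, using the standard vanishing facts $\Hom_{\cO}(L(\lambda),\nabla(\mu))=\delta_{\lambda\mu}\mC$, $\Hom_{\cO}(\Delta(\mu),L(\lambda))=\delta_{\lambda\mu}\mC$ and $\Ext^1_{\cO}(\Delta(\mu),\nabla(\nu))=0$, together with the identification $\Ext^1_{\cO}(\Delta(\mu),L(\lambda))\cong\Ext^1_{\cO}(\Delta(\mu),\nabla(\lambda))$-type comparisons coming from the highest weight structure. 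The hypothesis $\mu\neq s\cdot\lambda$ is there precisely so that no copy of $L(s\cdot\lambda)$ interferes: note $T_sL(\lambda)$ has highest weight $s\cdot\lambda$ (since $L(\lambda)$ is $s$-free), and $U_s(\lambda)=\Rad T_sL(\lambda)$ does not have $L(s\cdot\lambda)$ in its head.

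For part (1), assume $L(\mu)$ is $s$-finite. First I would show $\Hom_{\cO}(T_sL(\lambda),\nabla(\mu))=0$ and $\Hom_{\cO}(\Delta(\mu),T_sL(\lambda))=0$: the point is that $T_s$ of an $s$-finite module vanishes, so by adjunction (Theorem~\ref{TsGs}, $G_s$ right adjoint to $T_s$, combined with $G_s\cong\dd T_s\dd$) one gets $\Hom_{\cO}(T_sL(\lambda),\nabla(\mu))\cong\Hom_{\cO}(T_sL(\lambda),\dd\Delta(\mu))\cong\Hom_{\cO}(\Delta(\mu),\dd T_sL(\lambda))\cong\Hom_{\cO}(\Delta(\mu),G_s\dd L(\lambda))$, hmm — more directly, $\Hom_{\cO}(\Delta(\mu),G_s N)\cong\Hom_{\cD^b(\cO)}(\cL T_s\Delta(\mu),N)$ and $T_s\Delta(\mu)$ is controlled by \eqref{twistVer}/\eqref{eqLT}; for $s$-finite $L(\mu)$ one has $\cL T_s$ killing the relevant Verma contributions appropriately. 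Having the two outer $\Hom$'s vanish, the long exact sequence from $0\to U_s(\lambda)\to T_sL(\lambda)\to L(\lambda)\to 0$ applied to $\Hom_{\cO}(-,\nabla(\mu))$ gives the embedding $\Hom_{\cO}(U_s(\lambda),\nabla(\mu))\hookrightarrow\Ext^1_{\cO}(L(\lambda),\nabla(\mu))$, and one then identifies $\Ext^1_{\cO}(L(\lambda),\nabla(\mu))\cong\Ext^1_{\cO}(\Delta(\mu),L(\lambda))$ using the duality $\dd$ together with $\Ext^1_{\cO}(L(\lambda),\nabla(\mu))\cong\Ext^1_{\cO}(\dd\nabla(\mu),\dd L(\lambda))=\Ext^1_{\cO}(\Delta(\mu),L(\lambda))$. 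The second inclusion is obtained dually, applying $\Hom_{\cO}(\Delta(\mu),-)$ to \eqref{Jmiddle} and using $\Hom_{\cO}(\Delta(\mu),L(\lambda))=\delta_{\lambda\mu}\mC$ (and the case $\lambda=\mu$ is harmless since then $L(\mu)$ is $s$-free, not $s$-finite, so it is excluded).

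For part (2), assume $L(\mu)$ is $s$-free. Here I would argue that $U_s(\lambda)$ has no $s$-free simple subquotients that could contribute, or rather, more cleanly, that $\Hom_{\cO}(U_s(\lambda),\nabla(\mu))=0$ because by Corollary~\ref{CorNew}(1) (applied with both $L(\lambda)$ and $L(\mu)$ $s$-free) and the sequence \eqref{Jmiddle}, any nonzero map $U_s(\lambda)\to\nabla(\mu)$ would produce, via $\Hom_{\cO}(T_sL(\lambda),\nabla(\mu))$ and the adjunction/self-duality of $T_s$ on $s$-free modules, a map forcing $\mu=s\cdot\lambda$ — excluded by hypothesis — or contradicting simplicity of $L(\lambda)$; concretely $\Hom_{\cO}(T_sL(\lambda),\nabla(\mu))\hookrightarrow\Hom_{\cO}(U_s(\lambda),\nabla(\mu))\oplus(\text{contribution of }L(\lambda))$ and on the other hand $\Hom_{\cO}(T_sL(\lambda),\nabla(\mu))\cong\Hom_{\cO}(T_sL(\mu),\nabla(\lambda))$ which, by \eqref{Jmiddle} for $\mu$, is at most one-dimensional and accounts only for the $\lambda=\mu$ quotient term. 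The $\Hom_{\cO}(\Delta(\mu),U_s(\lambda))=0$ statement follows by applying $\dd$. The main obstacle I anticipate is pinning down exactly the adjunction manipulations in the derived category so that the ``$\mu\neq s\cdot\lambda$'' exclusion does all the work and no stray $\Ext$ or $\Hom$ term survives; the bookkeeping with $\cL T_s\Delta(\mu)$ (which sits in degree $0$ or $1$ depending on whether $L(\mu)$ is $s$-free, by \eqref{eqLT}) is where the care is needed, but this is exactly the content of \cite[Section~4]{AS} transported to the super setting via the tools already assembled (Theorem~\ref{TsGs}, Corollary~\ref{CorNew}, Proposition~\ref{propKM}).
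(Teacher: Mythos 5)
Your treatment of the first inclusion in part~(1) is essentially the paper's: apply $\Hom_{\cO}(-,\nabla(\mu))$ to \eqref{Jmiddle}, kill the term $\Hom_{\cO}(T_sL(\lambda),\nabla(\mu))$ via the twisting--completion adjunction of Theorem~\ref{TsGs} (i.e.\ Corollary~\ref{CorNew}(1)) together with \eqref{twistVer}, and rewrite the resulting $\Ext^1$ term by duality. That part is fine.

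The second inclusion of part~(1) is where the argument breaks. You propose to show $\Hom_{\cO}(\Delta(\mu),T_sL(\lambda))=0$ and then deduce the inclusion ``dually'' by applying $\Hom_{\cO}(\Delta(\mu),-)$ to \eqref{Jmiddle}. But that long exact sequence, using only $\Hom_{\cO}(\Delta(\mu),L(\lambda))=0$, gives an \emph{isomorphism} $\Hom_{\cO}(\Delta(\mu),U_s(\lambda))\cong\Hom_{\cO}(\Delta(\mu),T_sL(\lambda))$, not the claimed embedding into $\Ext^1$. Moreover the vanishing $\Hom_{\cO}(\Delta(\mu),T_sL(\lambda))=0$ is simply false: by the isomorphism just noted it would force $\Hom_{\cO}(\Delta(\mu),U_s(\lambda))=0$, i.e.\ that $L(\mu)$ never sits in the socle of the Jantzen middle, which fails already for $\fg=\mathfrak{sl}(3)$ at a regular integral weight (e.g.\ $L(0)$ is a summand of $U_{s_\alpha}(s_\beta\cdot 0)$ and $0\neq s_\alpha\cdot(s_\beta\cdot 0)$). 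There is no clean adjunction identity of the form $\Hom(\Delta(\mu),T_sN)\cong\dots$ that yields this vanishing, because $T_s$ is a \emph{left} adjoint and $\nabla(\mu)$ is neither $s$-free nor $s$-finite as a module. The paper's proof of this inclusion is genuinely different and more involved: it applies $\Hom_{\cO}(-,U_s(\lambda))$ to $0\to\Delta(s\cdot\mu)\to\Delta(\mu)\to Q\to 0$ to reduce the problem to $\Hom_{\cO}(Q,T_sL(\lambda))$ (the passage from $U_s(\lambda)$ to $T_sL(\lambda)$ uses $\mathrm{top}\,Q\cong L(\mu)\not\cong L(\lambda)$), and then uses the other short exact sequence $0\to Q\to T_s\Delta(s\cdot\mu)\to\Delta(s\cdot\mu)\to 0$ together with Corollary~\ref{CorNew}(2) at $k=0$ and $k=1$ to embed $\Hom_{\cO}(Q,T_sL(\lambda))$ into $\Ext^1_{\cO}(\Delta(s\cdot\mu),T_sL(\lambda))\cong\Ext^1_{\cO}(\Delta(\mu),L(\lambda))$. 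That structure is not a ``dual'' of the first inclusion, and the hypothesis $\mu\neq s\cdot\lambda$ is used twice (to kill the edge Hom terms), not just to exclude a degenerate index.

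Part~(2) is correct in conclusion but your route is far more convoluted than needed and, as sketched, incomplete. The paper's proof is one line: by Proposition~\ref{propU}, $U_s(\lambda)$ has only $s$-finite composition factors, while any nonzero map $U_s(\lambda)\to\nabla(\mu)$ would have image containing the $s$-free socle $L(\mu)$, and any nonzero map $\Delta(\mu)\to U_s(\lambda)$ would exhibit the $s$-free top $L(\mu)$ as a subquotient of $U_s(\lambda)$; both are impossible. No appeal to Corollary~\ref{CorNew} or twisting adjunctions is required here.
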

\begin{proof}
As $U_s(\lambda)$ is $s$-finite by Proposition~\ref{propU}, claim~(2) follows immediately. So we consider~$\mu >s\cdot\mu$. The functor $\Hom_{\cO}(-,\nabla(\mu))$ acting on the short exact sequence
\eqref{Jmiddle}
yields a long exact sequence which contains
\begin{equation}\label{eqLem}\Hom_{\cO}(T_s L(\lambda),\nabla(\mu))\to \Hom_{\cO}(U_s(\lambda),\nabla(\mu))\to \Ext^1_{\cO}(\Delta(\mu),L(\lambda)).\end{equation}
Corollary \ref{CorNew}(1) and equation \eqref{twistVer} imply that
$$\Hom_{\cO}(T_s L(\lambda),\nabla(\mu))\cong \Hom_{\cO}(T_s\Delta(\mu), L(\lambda))\cong \Hom_{\cO}(\Delta(s\cdot\mu), L(\lambda))=0.$$
The first inclusion in claim~(1) hence follows from equation~\eqref{eqLem}.

To prove the second inclusion we start from the short exact sequence
$$0\to \Delta(s\cdot\mu)\to \Delta(\mu)\to Q\to 0,$$
defining the module~$Q$. Claim (2) applied to $\Delta(s\cdot\mu)$, followed by the observation that top$Q\cong L(\mu)$, then imply that
$$\Hom_{\cO}(\Delta(\mu), U_s(\lambda))\cong \Hom_{\cO}(Q,U_s(\lambda))\cong \Hom_{\cO}(Q,T_sL(\lambda)).$$
Precisely as in \cite[Lemma~6.2]{AL}, by reducing to either the~$\mathfrak{sl}(2)$-case of \cite[Section~6.5]{AL} or the the strongly typical~$\mathfrak{osp}(1|2)$-case, one shows that there is a short exact sequence
$$0\to Q\to T_s\Delta(s\cdot\mu)\to\Delta(s\cdot\mu)\to 0.$$
Now we apply the functor $\Hom_{\cO}(-,T_sL(\lambda))$ to this short exact sequence to find
$$\Hom_{\cO}(T_s \Delta(s\cdot\mu), T_sL(\lambda))\to \Hom_{\cO}(Q,T_sL(\lambda))\to \Ext^1_{\cO}(\Delta(s\cdot\mu),T_sL(\lambda)),$$
which is exact.
By Corollary \ref{CorNew}(2) for $k=0$, the first term is zero. By equation \eqref{twistVer} and Corollary \ref{CorNew}(2) for $k=1$, we also find that the last term is isomorphic to $\Ext^1_{\cO}(\Delta(\mu),L(\lambda)),$ which proves the second inclusion of part~(1).
\end{proof}

\begin{pfAS}
We claim that if, for two $\mu_1,\mu_2\in\Lambda_0$, we have
\begin{equation}\label{homhom1}\Hom_{\cO}(U_s(\lambda),\nabla(\mu_1))\not=0\not= \Hom_{\cO}(U_s(\lambda),\nabla(\mu_2)),\end{equation}
then $\Ext^1_{\cO}(\Delta(\mu_1),L(\mu_2))=0$; and dually if
\begin{equation}\label{homhom3}\Hom_{\cO}(\Delta(\mu_1),U_s(\lambda))\not=0\not= \Hom_{\cO}(\Delta(\mu_2),U_s(\lambda)),\end{equation}
then $\Ext^1_{\cO}(L(\mu_1),\nabla(\mu_2))=0.$ The theorem then follows from \cite[Theorem~4.1]{CPS}.

By assumption, $\cO^{\Lambda_0}_\chi(\fg,\fb)$ admits an abstract Kazhdan-Lusztig theory. Hence by \cite[Definition~3.3]{CPS}, we have some length function~$\len:\Lambda_0\to\mZ$ and a parity function~$\epsilon:\Lambda_0\to\mZ$ such that~$\epsilon(\lambda)$ is the two-sided parity of $L(\lambda)$. By {\it loc. cit.} it clearly follows that~$\epsilon(\lambda)-\len(\lambda)\equiv 0 (\mbox{mod} 2)$.

Now we prove the claims in \eqref{homhom1} and \eqref{homhom3}. Assume~$\mu_1,\mu_2$ such that we have \eqref{homhom1}. Lemma~\ref{lemAS} implies that 
$$\Ext^1_{\cO}(\Delta(\mu_1),L(\lambda))\not=0\not=\Ext^1_{\cO}(\Delta(\mu_2),L(\lambda)).$$
So by \cite[Defintion~3.3]{CPS}, $\len(\mu_1)-\len(\mu_2)$ must be even and hence~$\Ext^1_{\cO}(\Delta(\mu_1),L(\mu_2))=0$.
The case \eqref{homhom3} is proved identically.\qed
\end{pfAS}

\subsection{} The {\em relevance of the semisimplicity} of the Jantzen middles, for this work, lies in the fact that it implies that the completed KL order on $\Lambda_0$ is identical to the KL order, where both are defined in Subsection \ref{KLorder}. To prove this we observe the following proposition.
\begin{prop}\label{PropU}
If the Jantzen middle~$U_s(\lambda)$ is semisimple, then
$$U_s(\lambda)\;\cong\;\bigoplus_{L(\nu) \,s \,{\rm finite}}L(\nu)^{\oplus \mu(\lambda,\nu)}$$
where~$\mu(\lambda,\nu)=\dim\Ext^1_{\cO}(L(\lambda),L(\nu)).$
\end{prop}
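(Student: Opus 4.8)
The plan is to extract the multiplicities of each simple module $L(\nu)$ in the semisimple module $U_s(\lambda)$ by pairing against standard and costandard modules. Since $U_s(\lambda)$ is assumed semisimple, $U_s(\lambda)\cong\bigoplus_\nu L(\nu)^{\oplus a(\lambda,\nu)}$ where $a(\lambda,\nu)=\dim\Hom_{\cO}(U_s(\lambda),\nabla(\nu))=\dim\Hom_{\cO}(\Delta(\nu),U_s(\lambda))$, using that $\Hom_{\cO}(L(\nu),\nabla(\nu))$ and $\Hom_{\cO}(\Delta(\nu),L(\nu))$ are one-dimensional and that $\Hom$ from a standard into a simple, or from a simple into a costandard, of a different highest weight vanishes. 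By Proposition~\ref{propU}, $U_s(\lambda)$ is $s$-finite, so $a(\lambda,\nu)=0$ unless $L(\nu)$ is $s$-finite; this already accounts for the index set in the displayed formula. It therefore remains to identify $a(\lambda,\nu)$ with $\mu(\lambda,\nu)=\dim\Ext^1_{\cO}(L(\lambda),L(\nu))$ for $s$-finite $L(\nu)$.

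First I would establish the inequality $a(\lambda,\nu)\le\mu(\lambda,\nu)$. Lemma~\ref{lemAS}(1) gives injections $\Hom_{\cO}(U_s(\lambda),\nabla(\nu))\hookrightarrow\Ext^1_{\cO}(\Delta(\nu),L(\lambda))$ and $\Hom_{\cO}(\Delta(\nu),U_s(\lambda))\hookrightarrow\Ext^1_{\cO}(\Delta(\nu),L(\lambda))$ when $L(\nu)$ is $s$-finite and $\nu\neq s\cdot\lambda$. The case $\nu=s\cdot\lambda$ does not occur here, since $L(s\cdot\lambda)$ is $s$-free (as $L(\lambda)$ is $s$-free and $s\cdot(s\cdot\lambda)=\lambda\le s\cdot\lambda$), hence is excluded from the sum. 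So it suffices to bound $\dim\Ext^1_{\cO}(\Delta(\nu),L(\lambda))$ by $\mu(\lambda,\nu)$: applying $\Hom_{\cO}(-,L(\lambda))$ to $0\to\Rad\Delta(\nu)\to\Delta(\nu)\to L(\nu)\to 0$ and using that $\Ext^1$ from a Verma to a simple which is not its own top... more precisely, the long exact sequence gives $\Hom_{\cO}(\Rad\Delta(\nu),L(\lambda))\to\Ext^1_{\cO}(L(\nu),L(\lambda))\to\Ext^1_{\cO}(\Delta(\nu),L(\lambda))\to\Ext^1_{\cO}(\Rad\Delta(\nu),L(\lambda))$; combined with $\Ext^i_{\cO}(\Delta(\nu),L(\lambda))=0$ for $i\ge1$ unless... actually the cleanest route is to use that $\Ext^1_{\cO}(\Delta(\nu),L(\lambda))\cong\Ext^1_{\cO}(L(\nu),L(\lambda))$ whenever $\nu\not\ge\lambda$ in the relevant sense, and handle the highest-weight comparison via the duality $\dd$, which interchanges $\Delta(\nu)$ and $\nabla(\nu)$ and fixes simples, giving $\Ext^1_{\cO}(\Delta(\nu),L(\lambda))\cong\Ext^1_{\cO}(L(\lambda),\nabla(\nu))$.

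For the reverse inequality $\mu(\lambda,\nu)\le a(\lambda,\nu)$, I would argue as follows. A nonzero class in $\Ext^1_{\cO}(L(\lambda),L(\nu))$ with $L(\nu)$ $s$-finite corresponds to a non-split extension $0\to L(\nu)\to E\to L(\lambda)\to 0$. Pushing this through the projective cover $P(\lambda)\tto L(\lambda)$ realizes $L(\nu)$ as a quotient of $\Rad P(\lambda)$; since $L(\nu)$ is $s$-finite, it factors through the largest $s$-finite quotient of $\Rad P(\lambda)$, which is exactly $U_s(\lambda)$ by Proposition~\ref{propU}. This produces a surjection-onto-a-copy argument showing $\dim\Ext^1_{\cO}(L(\lambda),L(\nu))\le[U_s(\lambda):L(\nu)]=a(\lambda,\nu)$; one must check that distinct extension classes give rise to distinct composition-factor contributions, which follows from the semisimplicity of $U_s(\lambda)$ together with the fact that $\Hom_{\cO}(\Rad P(\lambda),L(\nu))$ surjects onto the relevant $\Ext^1$ (from the long exact sequence for $0\to\Rad P(\lambda)\to P(\lambda)\to L(\lambda)\to 0$, using $\Ext^1_{\cO}(P(\lambda),L(\nu))=0$), and that this $\Hom$ space factors through $U_s(\lambda)$.

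The main obstacle I anticipate is matching the two bounds cleanly, i.e. showing both $a(\lambda,\nu)=\dim\Hom_{\cO}(\Rad P(\lambda),L(\nu))$ when $U_s(\lambda)$ is semisimple and $\dim\Hom_{\cO}(\Rad P(\lambda),L(\nu))=\mu(\lambda,\nu)$ for $s$-finite $\nu$ — the second equality should drop out of the long exact sequence applied to $0\to\Rad P(\lambda)\to P(\lambda)\to L(\lambda)\to 0$ once we know $\Ext^1_{\cO}(P(\lambda),L(\nu))=0$ and $\Hom_{\cO}(P(\lambda),L(\nu))=\delta_{\lambda\nu}$, while the first uses Proposition~\ref{propU} plus semisimplicity. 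Keeping careful track of the exceptional weight $s\cdot\lambda$ (ruled out above) and of the direction of the inclusions in Lemma~\ref{lemAS} is the delicate bookkeeping, but no new ideas beyond the quoted results are needed.
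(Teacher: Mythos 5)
Your proposal takes a genuinely different route from the paper's, which simply cites \cite[Theorem~5.12(ii)]{CM1} and observes that semisimplicity upgrades the description of the top to a description of the whole module. Your self-contained argument via Proposition~\ref{propU} is viable and in fact cleaner than your own presentation suggests: since $U_s(\lambda)$ is assumed semisimple it equals its top, and by the universal property of the largest $s$-finite quotient of $\Rad P(\lambda)$, for any $s$-finite simple $L(\nu)$ one has $\Hom_{\cO}(U_s(\lambda),L(\nu))\cong\Hom_{\cO}(\Rad P(\lambda),L(\nu))$; the long exact sequence for $0\to\Rad P(\lambda)\to P(\lambda)\to L(\lambda)\to 0$, together with $\Ext^1_{\cO}(P(\lambda),L(\nu))=0$ and $\Hom_{\cO}(P(\lambda),L(\nu))=0$ (since $\nu\ne\lambda$, as $L(\lambda)$ is $s$-free while $L(\nu)$ is $s$-finite), identifies $\dim\Hom_{\cO}(\Rad P(\lambda),L(\nu))$ with $\mu(\lambda,\nu)$. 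Your third and fourth paragraphs contain exactly this; the two inequalities of paragraphs two and three are not needed as a two-sided squeeze.

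However, your second paragraph contains a concrete error that would sink the argument if it were relied upon: you claim $L(s\cdot\lambda)$ is $s$-free and therefore excluded from the sum, but the opposite holds. Recall $L(\mu)$ is $s$-free iff $s\cdot\mu\ge\mu$; taking $\mu=s\cdot\lambda$ one gets $s\cdot\mu=\lambda<s\cdot\lambda=\mu$, so $L(s\cdot\lambda)$ is $s$-\emph{finite} whenever $L(\lambda)$ is strictly $s$-free. Thus $\nu=s\cdot\lambda$ is a genuine index in the sum, and in general $L(s\cdot\lambda)$ really does occur in $U_s(\lambda)$ (for $\mathfrak{sl}_2$ with $\lambda$ antidominant regular, $U_s(\lambda)\cong L(s\cdot\lambda)$). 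Since Lemma~\ref{lemAS} carries the hypothesis $\mu\ne s\cdot\lambda$, your route through that lemma simply cannot reach this weight, and your attempted reduction of $\dim\Ext^1_{\cO}(\Delta(\nu),L(\lambda))$ to $\mu(\lambda,\nu)$ is also left vague. You should discard paragraph two entirely and run only the $\Rad P(\lambda)$ argument, which uniformly covers all $s$-finite $\nu$ including $\nu=s\cdot\lambda$.
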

\begin{proof}
This follows immediately from \cite[Theorem 5.12(ii)]{CM1}.
\end{proof}
\begin{cor}\label{corOrder}
If all Jantzen middles $U_s(\kappa)$ are semisimple for $\kappa\in \Lambda_0$ and any simple reflection~$s$, then $\nu\KL\lambda$ if and only if $\nu\KL^c\lambda$ for all $\nu,\lambda\in\Lambda_0$.
\end{cor}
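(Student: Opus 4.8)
The plan is to prove the corollary by combining the two descriptions of the orders from Subsection~\ref{KLorder} with the structure of the Jantzen middle. Recall that $\KL^c$ is generated by declaring $\nu\KL^c\lambda$ whenever $L(\lambda)$ is a subquotient of $T_sL(\nu)$ for some simple reflection $s$, while $\KL$ is generated by declaring $\nu\KL\lambda$ whenever $\Ext^1_{\cO}(L(\lambda),L(\nu))\neq 0$ and there is a simple reflection $s$ with $L(\lambda)$ being $s$-finite and $L(\nu)$ being $s$-free. Since we already know $\nu\KL\lambda \Rightarrow \nu\KL^c\lambda$ by \cite[Theorem~5.12]{CM1} (as quoted in Subsection~\ref{KLorder}), it suffices to prove the reverse implication, and for that, since both are the transitive closures of their generating relations, it is enough to show that each generating step of $\KL^c$ is witnessed by a step (or composite of steps) of $\KL$.

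So suppose $L(\lambda)$ is a subquotient of $T_sL(\nu)$ for some simple reflection $s$. If $L(\nu)$ is $s$-finite, then $T_sL(\nu)=0$ by \eqref{eqLT}, so this case is vacuous. Hence $L(\nu)$ is $s$-free, and we are in the situation of the short exact sequence \eqref{Jmiddle}, $0\to U_s(\nu)\to T_sL(\nu)\to L(\nu)\to 0$. If $L(\lambda)$ is a subquotient of the quotient $L(\nu)$, then $\lambda=\nu$ and there is nothing to prove (the relation is reflexive). Otherwise $L(\lambda)$ is a subquotient of $U_s(\nu)$. By hypothesis all Jantzen middles are semisimple, so Proposition~\ref{PropU} applies: $U_s(\nu)\cong\bigoplus_{L(\mu)\,s\text{-finite}}L(\mu)^{\oplus\mu(\nu,\mu)}$ with $\mu(\nu,\mu)=\dim\Ext^1_{\cO}(L(\nu),L(\mu))$. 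Thus $L(\lambda)$ being a composition factor of $U_s(\nu)$ forces $L(\lambda)$ to be $s$-finite and $\mu(\nu,\lambda)=\dim\Ext^1_{\cO}(L(\nu),L(\lambda))\neq 0$. But this is exactly the defining condition for $\nu\KL\lambda$: we have a simple reflection $s$ with $L(\lambda)$ $s$-finite, $L(\nu)$ $s$-free, and $\Ext^1_{\cO}(L(\nu),L(\lambda))\neq 0$. Hence $\nu\KL\lambda$, and the generating step of $\KL^c$ is a generating step of $\KL$.

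Taking transitive closures, it follows that $\nu\KL^c\lambda$ implies $\nu\KL\lambda$ for all $\nu,\lambda\in\Lambda_0$, which together with the already-known converse gives the equivalence of the two quasi-orders, as claimed. The one point requiring a little care is the reflexive/trivial case where $L(\lambda)$ is a subquotient of the quotient term $L(\nu)$ rather than of $U_s(\nu)$: here one simply notes that both $\KL$ and $\KL^c$ are quasi-orders, hence reflexive, so no generating relation is needed. I do not expect any serious obstacle here, since all the genuine content has been pushed into Theorem~\ref{thmAS}, \eqref{Jmiddle} and Proposition~\ref{PropU}; the corollary is a bookkeeping argument matching up the two generating relations.
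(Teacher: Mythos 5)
Your proof is correct and matches the paper's (one-line) argument: decompose $T_sL(\nu)$ via the short exact sequence \eqref{Jmiddle}, use Proposition~\ref{PropU} to identify the composition factors of the Jantzen middle, and read off the generating relation of $\KL$. One small imprecision: the definition of $\KL$ in Subsection~\ref{KLorder} requires $\Ext^1_{\cO}(L(\lambda),L(\nu))\neq 0$, whereas Proposition~\ref{PropU} applied to $U_s(\nu)$ hands you $\Ext^1_{\cO}(L(\nu),L(\lambda))\neq 0$; these agree because the simple-preserving duality $\dd$ on $\cO$ gives $\Ext^1_{\cO}(L(\nu),L(\lambda))\cong\Ext^1_{\cO}(L(\lambda),L(\nu))$, and you should say so explicitly rather than asserting the conditions coincide verbatim.
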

\begin{proof}
This follows from Proposition \ref{PropU} and the definitions in \ref{KLorder}.
\end{proof}

\begin{thm}\label{thmKLgl}
For~$\fg=\mathfrak{gl}(m|n)$ and~$\lambda\in\Lambda_0$, the module~$U_s(\lambda)$ is semisimple. Consequently, the KL and completed KL order are identical on~$\Lambda_0$.
\end{thm}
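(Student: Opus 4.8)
The plan is to invoke Theorem~\ref{thmAS} together with Corollary~\ref{corOrder}, reducing everything to the statement that category~$\cO^{\Lambda_0}_\chi(\mathfrak{gl}(m|n),\fb)$ admits a Kazhdan--Lusztig theory in the sense of~\cite[Definition~3.3]{CPS}. So the real content is: \emph{for $\fg=\mathfrak{gl}(m|n)$, every integral block of category~$\cO$ admits an abstract KL theory.} This I would extract from the now-established Kazhdan--Lusztig combinatorics for $\mathfrak{gl}(m|n)$, due to Cheng--Lam--Wang \cite{CLW} and Brundan--Losev--Webster \cite{BLW}, which identify the (integral) blocks of~$\cO(\mathfrak{gl}(m|n))$ with blocks of parabolic category~$\cO$ for $\mathfrak{gl}$ (equivalently, with module categories over suitable quasi-hereditary covers), and in particular show that the graded decomposition numbers are given by (parabolic) Kazhdan--Lusztig polynomials and that the category is Koszul. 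Koszulity of a highest weight category with the appropriate parity/length data is exactly the input \cite{CPS} require; more precisely, \cite[\S3]{CPS} show that a block which is ``Koszul'' — or which is equivalent to a module category with graded Cartan/decomposition data of pure, alternating parity — admits a KL theory, with length function the Koszul grading shift and parity function its reduction mod~$2$.

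Concretely I would proceed as follows. First I would note that, by \cite[Theorem~5.12]{CM1} (quoted here as Subsection~\ref{KLorder}) we already have $\mu\KL\lambda\Rightarrow\mu\KL^c\lambda$, so only the reverse implication is at stake, and by Corollary~\ref{corOrder} it is enough to prove semisimplicity of every $U_s(\kappa)$, $\kappa\in\Lambda_0$, $s$ a simple reflection. Second, by Theorem~\ref{thmAS} it suffices to check that each integral block $\cO^{\Lambda_0}_\chi(\mathfrak{gl}(m|n),\fb)$ satisfies \cite[Definition~3.3]{CPS}. (One subtlety: the statement to be proved ranges over all $\lambda\in\Lambda_0$ and all choices of Borel with fixed $\fb_{\oa}$; but the set of Jantzen middles depends only on $\fb_{\oa}$, as remarked in Subsection~\ref{secAS}, and $\Lambda_0$ is a union of blocks, so it genuinely suffices to treat one block at a time and for one convenient Borel — the ``standard'' one used in \cite{CLW, BLW}.) Third, I would cite \cite{CLW, BLW} to identify such a block with a block of parabolic category~$\cO$ of type~$A$, hence with the module category of a Koszul quasi-hereditary algebra whose graded decomposition numbers are parabolic KL polynomials; the grading provides the length function $\len$ and its mod-$2$ reduction provides the parity function $\epsilon$, and the purity of the Koszul grading gives precisely the vanishing of $\Ext$'s in ``wrong'' parities required by \cite[Definition~3.3]{CPS}. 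Fourth, with the hypothesis of Theorem~\ref{thmAS} verified, semisimplicity of $U_s(\lambda)$ follows for the simple reflections $s=s_\alpha$ with $\alpha$ or $\alpha/2$ simple in $\Delta^+$; for $\mathfrak{gl}(m|n)$, a suitable choice of $\fb$ makes all even simple reflections of this form, and since the conclusion is independent of $\fb$ we obtain semisimplicity of $U_s(\lambda)$ for all simple $s\in W$ and all $\lambda\in\Lambda_0$. Finally, Corollary~\ref{corOrder} yields $\KL=\KL^c$ on $\Lambda_0$.

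The main obstacle, and the place where care is needed, is the precise matching of hypotheses: \cite[Definition~3.3]{CPS} is phrased in terms of a length function and a parity function on the weight poset satisfying certain $\Ext$-vanishing conditions between standard, costandard and simple objects, whereas what \cite{CLW, BLW} directly deliver is a Koszul grading / a comparison with parabolic type-$A$ category~$\cO$. Bridging these is standard — Koszulity of a standard Koszul (balanced) quasi-hereditary algebra implies a KL theory in the sense of \cite{CPS}, and type-$A$ parabolic category~$\cO$ is known to be Koszul by \cite{BGS} and standard Koszul by \cite{ADL} — but one must be a little careful about which version of ``KL theory'' is meant and about the fact that \cite{BLW, CLW} realise the block only as an equivalence (or as a highest weight cover) rather than exhibiting the grading on the original block. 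Since an abstract KL theory in the sense of \cite{CPS} is a property of the abelian highest weight category (it only involves $\Ext$-groups, a length function and a parity function on the poset, all preserved by highest weight equivalences), transporting it across the equivalences of \cite{CLW, BLW} is legitimate; I would simply state this transport explicitly. Once this identification is in place, every remaining step is a direct appeal to results already assembled in the paper (Theorem~\ref{thmAS}, Corollary~\ref{corOrder}).

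\begin{proof}
By Corollary~\ref{corOrder}, the second assertion follows from the first, so it suffices to show that $U_s(\lambda)$ is semisimple for every $\lambda\in\Lambda_0$ and every simple reflection $s\in W$. The set of Jantzen middles depends only on $\fb_{\oa}$ (Subsection~\ref{secAS}), and $\Lambda_0$ decomposes as a union of blocks, so we may work in a single integral block $\cO^{\Lambda_0}_\chi(\mathfrak{gl}(m|n),\fb)$ for a conveniently chosen Borel subalgebra with the given $\fb_{\oa}$; for $\fg=\mathfrak{gl}(m|n)$ we may choose $\fb$ so that for every simple reflection $s=s_\alpha\in W$ the even root $\alpha$ is simple in $\Delta^+$, which places us in the range of applicability of Theorem~\ref{thmAS}.

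By~\cite{CLW, BLW}, every integral block of category $\cO$ for $\mathfrak{gl}(m|n)$ is equivalent, as a highest weight category, to an integral block of parabolic category $\cO$ of type $A$; in particular it is equivalent to the module category of a Koszul quasi-hereditary algebra whose graded decomposition numbers are given by parabolic Kazhdan--Lusztig polynomials. Such a category admits a Kazhdan--Lusztig theory in the sense of~\cite[Definition~3.3]{CPS}: the Koszul grading provides a length function $\len$ on the weight poset, whose reduction modulo $2$ is the required parity function $\epsilon$, and purity of the grading yields exactly the $\Ext$-vanishing conditions between standard, costandard and simple objects demanded in~\cite[Definition~3.3]{CPS}. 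Since those conditions involve only $\Ext$-groups together with the functions $\len$ and $\epsilon$ on the poset, the existence of a Kazhdan--Lusztig theory is preserved under highest weight equivalences, so $\cO^{\Lambda_0}_\chi(\mathfrak{gl}(m|n),\fb)$ admits one as well.

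Theorem~\ref{thmAS} now applies and shows that $U_s(\lambda)$ is semisimple for every $\lambda\in\Lambda_0^\chi$ and every simple reflection $s=s_\alpha\in W$ with $\alpha$ simple in $\Delta^+$; by our choice of $\fb$ this covers all simple reflections in $W$. As the Jantzen middles do not depend on $\fb$, we conclude that $U_s(\lambda)$ is semisimple for all $\lambda\in\Lambda_0$ and all simple $s\in W$. Finally, Corollary~\ref{corOrder} gives that $\KL$ and $\KL^c$ coincide on $\Lambda_0$.
\end{proof}
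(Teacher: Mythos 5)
Your proof follows the same strategy as the paper's: reduce, via Theorem~\ref{thmAS} and Corollary~\ref{corOrder}, to the assertion that each integral block of $\cO$ for $\mathfrak{gl}(m|n)$ (with the distinguished Borel, so that even simple roots are simple in $\Delta^+$) admits an abstract Kazhdan--Lusztig theory in the sense of \cite[Definition~3.3]{CPS}. Where you differ is in how this hypothesis is verified. The paper simply cites \cite[Corollary~3.3]{CS}, which proves directly that the Brundan--Kazhdan--Lusztig theory of \cite{Brundan}, established in \cite{CLW, BLW}, is an abstract KL theory in the CPS sense; you instead re-derive this on the fly via Koszulity and transport under highest weight equivalences. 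Your overall logic is sound, but your intermediate claim that \emph{every integral block of $\cO(\mathfrak{gl}(m|n))$ is equivalent, as a highest weight category, to an integral block of parabolic type-$A$ category~$\cO$} overstates what \cite{CLW, BLW} prove: they establish the BKL combinatorics and (standard) Koszulity via tensor product categorifications, which in general are only realized as subquotients of parabolic type-$A$ blocks, not as full blocks. A literal block-to-block equivalence is not available for arbitrary atypical or singular blocks. What you actually need is the Koszulity together with the compatibility of the length/parity data, and that bridge to the CPS definition is exactly what \cite[Corollary~3.3]{CS} works out carefully; the paper's direct citation is therefore both more economical and more precise than your argument. With that replacement, the rest of your reasoning (transport across equivalences, the Borel reduction, and the appeal to Theorem~\ref{thmAS} and Corollary~\ref{corOrder}) is correct.
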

\begin{proof}
The claim is a statement about category~$\cO(\fg,\fb_{\oa})$, so we are free to choose~$\fb$. We choose the distinguished system of roots, for which every even simple root is also simple in~$\Delta^+$.
We consider the Brundan-Kazhdan-Lusztig theory formulated in \cite{Brundan} and proved to be correct in \cite{CLW, BLW}.  It is proved explicitly in \cite[Corollary 3.3]{CS} that, in this case, the BKL theory is an abstract KL theory in the sense of~\cite{CPS}. We can hence apply Theorem~\ref{thmAS} and Corollary \ref{corOrder}.
\end{proof}

We end this section with the following immediate consequence of Theorem~\ref{thmAS}, where we recall that~$T_s$ is really defined on~$\cO(\fg,\fb_{\oa})$.
\begin{prop}
Consider~$\fg$ in \eqref{list} with fixed~$\fb_{\oa}$. If for every even simple root $\alpha$, one of the Borel subalgebras~$\fb$ containing~$\fb_{\oa}$ for which $\alpha$ or $\alpha/2$ is a simple root is such that~$\cO^{\Lambda_0}(\fg,\fb)$ admits a Kazhdan-Lusztig theory, then every Jantzen middle in~$\cO^{\Lambda_0}$ is semisimple.
\end{prop}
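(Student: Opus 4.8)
The plan is to deduce the statement directly from Theorem~\ref{thmAS}, since the Jantzen middle is an invariant of $\cO(\fg,\fb_{\oa})$ rather than of the full highest weight structure attached to a particular $\fb$. Fix $\fg$ in \eqref{list} with its underlying $\fb_{\oa}$, and let $U_s(\lambda)$ be a Jantzen middle as in \eqref{Jmiddle}, where $s=s_\alpha$ for some even simple root $\alpha$. Recall from Subsection~\ref{secAS} that the module $U_s(\lambda)$, as the radical of $T_sL(\lambda)$ inside $\cO(\fg,\fb_{\oa})$, does not depend on the choice of a Borel subalgebra $\fb$ with $\fb_{\oa}=\fg_{\oa}\cap\fb$; only its labelling by $\Lambda_0$ does. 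Likewise $T_s$ is defined on $\cO(\fg,\fb_{\oa})$, and the notions of $s$-free and $s$-finite are properties of the modules, not of $\fb$.

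First I would invoke the hypothesis: for the given even simple root $\alpha$, choose a Borel subalgebra $\fb$ with $\fb_{\oa}=\fg_{\oa}\cap\fb$ such that $\alpha$ or $\alpha/2$ is simple in $\Delta^+$ (for that $\fb$) and such that $\cO^{\Lambda_0}(\fg,\fb)$ admits a Kazhdan-Lusztig theory in the sense of~\cite[Definition~3.3]{CPS}. Decomposing $\cO^{\Lambda_0}(\fg,\fb)=\bigoplus_\chi \cO^{\Lambda_0}_\chi(\fg,\fb)$, each block $\cO^{\Lambda_0}_\chi(\fg,\fb)$ inherits an abstract KL theory. Then Theorem~\ref{thmAS}, applied with this $\fb$ and this reflection $s=s_\alpha$ (for which $\alpha$ or $\alpha/2$ is simple in $\Delta^+$), gives that the Jantzen middle $U_s(\lambda)$ is semisimple for every $\lambda\in\Lambda_0^\chi$, hence for every $\lambda\in\Lambda_0$.

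Finally I would observe that semisimplicity of $U_s(\lambda)$ is intrinsic to the module and independent of the labelling: whether or not $T_sL(\lambda)/L(\lambda)$ is a semisimple object of $\cO(\fg,\fb_{\oa})$ does not refer to $\fb$. Running the argument over all even simple roots $\alpha$ of $\fg_{\oa}$ — for each of which the hypothesis supplies a suitable $\fb$ — yields that $U_s(\lambda)$ is semisimple for every simple reflection $s\in W$ and every $\lambda\in\Lambda_0$, i.e. every Jantzen middle in $\cO^{\Lambda_0}$ is semisimple. The only point requiring care — and the one I would state explicitly — is the $\fb$-independence of both $U_s(\lambda)$ as a module and of the property of being semisimple; once this is granted, the proof is a one-line application of Theorem~\ref{thmAS} to each even simple root in turn. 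This is not a real obstacle since the independence was already noted in Subsection~\ref{secAS}, but it is the conceptual crux of why a per-root choice of Borel subalgebra suffices.
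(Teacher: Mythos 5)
Your proposal is correct and follows exactly the route the paper intends: the paper gives no written proof, merely labelling the result an ``immediate consequence'' of Theorem~\ref{thmAS} together with the remark that $T_s$ lives on $\cO(\fg,\fb_{\oa})$, and your write-out (choose, per even simple root $\alpha$, the Borel subalgebra $\fb$ supplied by the hypothesis, apply Theorem~\ref{thmAS} on each block, then use $\fb$-independence of $U_s(\lambda)$ and of its semisimplicity) is precisely that intended argument made explicit.
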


\section{The primitive spectrum}\label{secConclusion}
\subsection{}The results in Section~\ref{secCompletion} lead to a description of the ordering on the primitive spectrum of any Lie superalgebra in \eqref{list}.
\begin{thm}\label{thmMain}
Consider~$\fg$ in \eqref{list} and a choice of triangular decomposition. 
For any $\lambda,\nu\in \fh^\ast$ we have
$$J(\nu)\subset J(\lambda)\quad\Leftrightarrow\quad \nu\KL^c\lambda.$$
\end{thm}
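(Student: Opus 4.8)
The statement is that the inclusion order on the primitive spectrum $\mathscr{X}$, after translating ideals $J(\lambda)=\Ann_U L(\lambda)$ through the parametrisation by weights, coincides with the completed Kazhdan--Lusztig quasi-order $\KL^c$ of Subsection \ref{KLorder}. The natural route is to reduce to the combination of Theorem~\ref{ThmWork} and the realisation of $G_s$ via twisting functors in Theorem~\ref{TsGs}. First I would reduce to a single block: the poset $\mathscr{X}$ is the disjoint union of the posets attached to the central characters $\chi$, so it suffices to prove the equivalence for $\lambda,\nu$ with $\chi_\lambda=\chi_\nu$ (if the central characters differ, both sides fail). Fixing a coset $\Lambda\in\fh^\ast/\Lambda_0$ and a super dominant $\lambda_0\in\Lambda$, Corollary~\ref{corMM}(2) lets me assume both weights lie in a common $W$-translate of the dominant block, so that Theorem~\ref{ThmWork} applies verbatim inside $\cO^\Lambda$.

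The core of the argument is then the chain of equivalences
\begin{align*}
J(\nu)\subset J(\lambda) &\iff L(\lambda)\text{ is a subquotient of }G_{s_1}\cdots G_{s_k}L(\nu)\text{ for some simple }s_i\in W_\Lambda\\
 &\iff L(\lambda)\text{ is a subquotient of }T_{s_1}\cdots T_{s_k}L(\nu)\text{ for some simple }s_i\in W_\Lambda,
\end{align*}
where the first line is Theorem~\ref{ThmWork} and the second uses $G_s\cong\dd T_s\dd$ from Theorem~\ref{TsGs} together with the fact that $\dd$ is an exact, simple-preserving, self-inverse duality: a simple $L(\lambda)$ is a subquotient of $G_{s_1}\cdots G_{s_k}L(\nu)$ iff $\dd L(\lambda)\cong L(\lambda)$ is a subquotient of $\dd G_{s_1}\cdots G_{s_k}\dd\,\dd L(\nu)=T_{s_1}\cdots T_{s_k}\dd L(\nu)=T_{s_1}\cdots T_{s_k}L(\nu)$. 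The right-hand side of the second line is, by the transitive generation in Subsection~\ref{KLorder} and iteration, precisely the relation $\nu\KL^c\lambda$ — modulo one bookkeeping point, namely that $\KL^c$ is defined using simple reflections in $W$ whereas Theorem~\ref{ThmWork} produces reflections simple in $W_\Lambda$. Here I would invoke the discussion at the end of Subsection~\ref{IntroTw} and \cite[Lemma~5.3]{CM1}: for $s\in W_\Lambda$ not simple in $W$, $T_s$ is built from a reduced $W$-expression of $s$, so a subquotient of $T_sL(\nu)$ is a subquotient of a composite of genuine $T_t$'s with $t$ simple in $W$, and conversely any $T_t$ with $t\in W$ simple either fixes the block (when $t\notin W_\Lambda$, by Corollary~\ref{corMM}(2)/\cite{CM1}) or is absorbed into $W_\Lambda$-expressions; thus the quasi-order generated is unchanged.

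The main obstacle I anticipate is precisely this last reconciliation between "simple in $W$" and "simple in $W_\Lambda$", i.e. making sure that allowing arbitrary $W_\Lambda$-simple reflections in Theorem~\ref{ThmWork} does not enlarge the quasi-order relative to the definition of $\KL^c$, and symmetrically that the $W$-simple reflections $t\notin W_\Lambda$ appearing in a reduced expression do not spuriously enlarge it either. The cleanest way to handle it is to observe that $\KL^c$ restricted to a fixed block is generated by the $T_s$ with $s$ simple in $W_\Lambda$: the reflections $t\notin W_\Lambda$ act on $\cO^\Lambda$ by the block equivalence of Corollary~\ref{corMM}(2), which preserves annihilators and sends simples to simples, hence contributes nothing new to subquotient relations within the poset; and every $W_\Lambda$-simple $s$ is a product of $W$-simple reflections whose intermediate twists only pass through other blocks, so the composite subquotient relation it induces is the same one generated on the nose by $W$-simple reflections after collapsing the block-equivalences. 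Once this is in place the theorem is immediate; everything else is a direct citation of Theorems~\ref{ThmWork} and~\ref{TsGs} and the definition of $\KL^c$.
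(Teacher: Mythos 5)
Your overall strategy matches the paper's: both reduce to Theorem~\ref{ThmWork} together with the identification $G_s\cong\dd T_s\dd$ from Theorem~\ref{TsGs}, and both resolve the discrepancy between $W$-simple and $W_\Lambda$-simple reflections by using the block equivalences that twisting by $s\notin W_\Lambda$ induces (the paper via \cite[Lemma~5.8]{CM1}, you via Corollary~\ref{corMM}(2); these are two guises of the same fact). The easy implication $\nu\KL^c\lambda\Rightarrow J(\nu)\subset J(\lambda)$ you get via your chain of equivalences; the paper cites \cite[Lemma~5.15]{CM1} for it directly, which is cleaner but inessential.

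There is one genuine gap, however, in the reduction that precedes the application of Theorem~\ref{ThmWork}. You pass from the assumption $J(\nu)\subset J(\lambda)$ to the hypothesis that $\lambda$ and $\nu$ live in a common $W$-translate of a fixed coset $\Lambda\in\fh^\ast/\Lambda_0$ by invoking only the equality of central characters $\chi_\nu=\chi_\lambda$. But for basic classical Lie superalgebras, equality of central characters is not the same thing as lying in a single $W\cdot$-orbit of an integral coset: the Harish-Chandra description of $Z(\fg)$ involves translation by atypical isotropic roots as well as the Weyl group action, so there is a nontrivial claim hiding here, namely that $\chi_\nu=\chi_\lambda$ forces $\nu\in W\cdot\Lambda_\lambda$. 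You neither prove nor cite this. The paper avoids the question altogether: it deduces $\nu\in W\cdot\Lambda_\lambda$ directly from the inclusion $J(\nu)\cap U_{\oa}\subset J(\lambda)\cap U_{\oa}$, by restricting to $\fg_{\oa}$ and using that $\res L(\lambda)$ has composition factors of the form $L_0(w\circ\lambda)$ (via \cite[Corollary~4.2]{CM1}), so that Duflo's theorem for the even part pins down the coset. You should either reproduce this even-part restriction argument or supply a proof that isotropic odd roots lie in $\Lambda_0$ (which is what would make the central-character reduction sufficient); as written, this step is missing and Theorem~\ref{ThmWork} cannot yet be applied to the pair $(\lambda,\nu)$. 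Everything after this point in your proposal is sound and follows the paper closely.
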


\subsection{} Before proving this, we consider the particular case~$\fg=\mathfrak{gl}(m|n)$, where our results are stronger, because of the study of the Janzten middle in Section~\ref{secAS}, and confirm \cite[Conjecture~5.7]{CouMus}. Similarly our methods lead to a rather self-contained proof that the inclusion order and left Kazhdan-Lusztig order coincide for reductive Lie algebras.
\begin{thm}
Consider~$\fg=\mathfrak{gl}(m|n)$. For any $\lambda,\nu\in \Lambda_0$ we have
$$J(\nu)\subset J(\lambda)\quad\Leftrightarrow\quad \nu\KL\lambda.$$
\end{thm}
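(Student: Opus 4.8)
The statement to prove is, for $\fg=\mathfrak{gl}(m|n)$ and $\lambda,\nu\in\Lambda_0$, the equivalence $J(\nu)\subset J(\lambda)\Leftrightarrow\nu\KL\lambda$. The key observation is that Theorem~\ref{thmMain} already gives $J(\nu)\subset J(\lambda)\Leftrightarrow\nu\KL^c\lambda$ for arbitrary $\fg$ in \eqref{list} (in particular for $\mathfrak{gl}(m|n)$), so the entire content of this corollary is the assertion that the two quasi-orders $\KL^c$ and $\KL$ coincide on $\Lambda_0$ when $\fg=\mathfrak{gl}(m|n)$. But that is precisely the second sentence of Theorem~\ref{thmKLgl}.

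So the proof is a two-line deduction: \emph{First} I would invoke Theorem~\ref{thmMain} with the chosen triangular decomposition to get $J(\nu)\subset J(\lambda)\iff\nu\KL^c\lambda$. \emph{Then} I would invoke Theorem~\ref{thmKLgl}, which states that for $\fg=\mathfrak{gl}(m|n)$ the relations $\KL$ and $\KL^c$ are identical on $\Lambda_0$, to rewrite $\nu\KL^c\lambda$ as $\nu\KL\lambda$. Chaining the two equivalences yields $J(\nu)\subset J(\lambda)\iff\nu\KL\lambda$, as desired.

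One small bookkeeping point worth mentioning: Theorem~\ref{thmMain} is stated for a \emph{choice of triangular decomposition}, whereas Theorem~\ref{thmKLgl} is proved after passing to the distinguished Borel (for which every even simple root is simple in $\Delta^+$), which is where the abstract Kazhdan--Lusztig theory of \cite{Brundan,CLW,BLW,CS} is available. Since, by the discussion in \ref{introId} and \ref{KLorder}, the poset $\mathscr{X}$ and the relations $\KL,\KL^c$ depend only on $\fb_{\oa}$ and not on $\fb$ itself — only the \emph{labelling} by $\Lambda_0$ is affected — there is no genuine obstacle here; one simply fixes the distinguished $\fb$ throughout, applies both theorems for that choice, and notes that the statement for any other Borel with the same $\fb_{\oa}$ follows by relabelling.

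\emph{The main "obstacle"} is therefore not in this corollary at all: all the real work has been done upstream, in the proof of Theorem~\ref{thmMain} (which rests on Theorem~\ref{ThmWork} and the Harish--Chandra bimodule machinery of Section~\ref{secHC}) and in the semisimplicity of the Jantzen middle for $\mathfrak{gl}(m|n)$ (Theorem~\ref{thmKLgl}, via Theorem~\ref{thmAS} and Corollary~\ref{corOrder}). The corollary itself is purely formal, and this is exactly the sense in which \cite[Conjecture~5.7]{CouMus} is now established: the $\Ext^1$-quiver description $\KL$ of the inclusion order is correct because the semisimplicity of Jantzen middles forces $\KL=\KL^c$, and $\KL^c$ is known to govern $\mathscr{X}$.
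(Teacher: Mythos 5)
Your proposal is correct and is exactly the paper's argument: the proof in the text is literally ``This follows immediately from Theorems~\ref{thmMain} and~\ref{thmKLgl}.'' Your extra bookkeeping remark about the choice of Borel is a sensible observation consistent with the paper's discussion in~\ref{introId}, \ref{KLorder} and the remark following the theorem, but it does not change the argument.
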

\begin{proof}
This follows immediately from Theorems \ref{thmMain} and \ref{thmKLgl}.
\end{proof}

Note that for~$\mathfrak{gl}(m|n)$, when we choose the distinguished Borel subalgebra, $\KL$ is generated by the relation~$\nu\KL\lambda$ if $\mu(\lambda,\nu)\not=0$ and there is a simple reflection~$s$ such that~$s\cdot\lambda < \lambda$ and $s\cdot\nu \ge \nu$, see \cite[Definition~5.6]{CouMus}.

\begin{pfMain}
First we note that the implication~$\mu\KL^c\lambda \Rightarrow J(\mu)\subset J(\lambda) $ follows immediately from \cite[Lemma~5.15]{CM1}, although it could also be derived from the results in Theorems~\ref{ThmWork} and~\ref{TsGs}. So we are left to prove the implication~$J(\mu)\subset J(\lambda)\Rightarrow\mu\KL^c\lambda.$

We claim that~$J(\mu)\subset J(\lambda)$ implies that~$\mu\in W\cdot\Lambda_\lambda=W\circ \Lambda_\lambda$. Indeed, the observation
$$J(\mu)\cap U_{\oa}\subset J(\lambda)\cap U_{\oa}$$ implies that~$\res L(\mu)$
must contain some subquotient of the form $L_0(w\circ\lambda)$ for some~$w\in W$, see e.g. \cite[Corollary~4.2]{CM1}. This implies that~$\mu-w\circ\lambda\in \Lambda_0$, as claimed.

First assume that~$\mu\in \Lambda_\lambda$, then the statement $J(\mu)\subset J(\lambda)\Rightarrow\mu\KL^c\lambda$ follows immediately from Theorems~\ref{ThmWork} and~\ref{TsGs}.

Now assume $\mu\in w\cdot\Lambda_\lambda$ for arbitrary $w\in W$. It suffices by iteration to consider~$w$ equal to some simple~$s\in W$ for which $s\not\in W_{\Lambda_\lambda}.$ By \cite[Lemma~5.8]{CM1}, $T_sL(\mu)\cong L(\mu')$, for some~$\mu'\in\fh^\ast$ satisfying $\mu'\KL^c\mu$ and $\mu\KL^c\mu'$. By construction, or \cite[Lemma~8.3]{CM1}, we must have~$\mu' \in \Lambda_\lambda$ and by \cite[Lemma~5.15]{CM1} we have~$J(\mu')=J(\mu)$. This reduces the claim to the previous case. Note that this reduction corresponds precisely to Corollary \ref{corMM}(2). 
\qed
\end{pfMain}

\section{The generic region}\label{secgeneric}

To gain a little more insight into the structure of the KL order we will describe it explicitly in the {\em generic} region, see \cite[Definition~7.1]{CM1} for the notion of generic weights. 
We only consider~$\fg$ equal to $\mathfrak{gl}(m|n)$ or $\mathfrak{osp}(m|2n)$. We will find that, using the deformation of Weyl group orbits introduced in \cite{CM1}, the KL order there actually reduces to the KL order for~$\fg_{\oa}$, so the one the Weyl group of \cite{KL}. This is specific to the generic region, as inclusions close the the walls of the Weyl chamber are far more prevalent for $\fg$ than for $\fg_{\oa}$, see for instance \cite[Section~7.6]{CouMus}.

We fix some Borel subalgebra~$\fb_{\oa}$ of~$\fg_{\oa}$ and only consider Borel subalgebras of~$\fg$ with~$\fb_{\oa}=\fb\cap\fg_{\oa}$. To make clear that the labelling of a simple module by its highest weight depends on the specific choice of $\fb$, we use the notation $L^{(\fb)}(\lambda)$.

\subsection{} In \cite[Section~8.1]{CM1}, for any simple reflection a class of `star actions' was defined on arbitrary weights. It was proved that in the generic region this led to a (unique) action of the Weyl group. In this section we are only interested in the generic region, so we can introduce the star action more conceptually. 
\begin{prop}\label{propCM1}
Consider $\fg$ as above. There is a unique action $\ast$ of the Weyl group $W$ on the set of generic simple modules in $\cO^\Lambda$ with the property that $s\ast L^{(\fb)}(\lambda)=L^{(\fb)}(s\cdot\lambda)$ in case $s=s_\alpha$ for $\alpha$ or $\alpha/2$ simple in $\Delta^+$.
\end{prop}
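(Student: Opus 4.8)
The plan is to reduce the statement to a combination of the braid-group action of completion (equivalently twisting) functors and the results in \cite[Section~8]{CM1} on star actions. The key point is that the various Borel subalgebras $\fb$ with a fixed even part $\fb_{\oa}$ are related by the equivalences $\cO(\fg,\fb)\;\tilde\to\;\cO(\fg,\fb')$ which are identities on underlying categories (since $\cO$ only depends on $\fb_{\oa}$), so the only thing that changes is the labelling of simple modules. For a simple reflection $s=s_\alpha$ with $\alpha$ (or $\alpha/2$) simple in $\Delta^+$, the twisting functor $T_s$ is an auto-equivalence up to the data of the Jantzen middle, and on simple modules $L^{(\fb)}(\lambda)$ which are $s$-free one has $T_sL^{(\fb)}(\lambda)$ with simple top $L^{(\fb)}(s\cdot\lambda)$ (this is the content of \eqref{twistVer} together with \cite[Theorem~5.12]{CM1}). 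In the generic region, by \cite[Section~7]{CM1}, twisting functors send generic simples to (shifted) generic simples, with no higher multiplicities, so $L^{(\fb)}(\lambda)\mapsto L^{(\fb)}(s\cdot\lambda)$ is a well-defined bijection on generic simple modules in $\cO^\Lambda$.

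First I would make precise what ``generic'' means here by invoking \cite[Definition~7.1]{CM1}, and recall from \cite[Section~7]{CM1} the crucial fact that on generic weights the twisting functors behave exactly as in the Lie algebra/typical case — in particular the set of generic simples in a block is stable under the relevant twisting functors and the assignment is a bijection. Next, for each simple reflection $s\in W$ I would choose a Borel $\fb=\fb(s)$ containing $\fb_{\oa}$ for which $\alpha$ or $\alpha/2$ becomes simple in $\Delta^+$ (such a Borel exists by odd reflection considerations as in \cite{CM1}), and define the candidate operator $s\ast(-)$ on generic simples by the formula $s\ast L^{(\fb)}(\lambda)=L^{(\fb)}(s\cdot\lambda)$. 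The content is that this does not depend on the choice of $\fb$ realising $s$ as an (almost) simple root, and that these operators satisfy the braid and quadratic relations of $W$.

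For independence of the choice of $\fb$, I would use \cite[Section~8.1]{CM1}: two Borels realising the same $s$ differ by a sequence of odd reflections through roots orthogonal (generically) to $\alpha$, and the corresponding relabelling commutes with $\lambda\mapsto s\cdot\lambda$ in the generic region; this is essentially the statement that the star action of \cite[Section~8.1]{CM1} is well-defined on generic weights. For the relations, the braid relations among the $T_s$ (Corollary after Theorem~\ref{TsGs}, via \cite[Lemma~5.3]{CM1} and Theorem~\ref{TsGs}) descend to the induced maps on generic simples, and the quadratic relation $s^2=e$ holds because $T_s$ applied twice to a generic $s$-free simple returns the original simple (as $s\cdot(s\cdot\lambda)=\lambda$ and genericity is preserved throughout). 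Uniqueness is immediate: any two $W$-actions agreeing with $s\mapsto(L^{(\fb)}(\lambda)\mapsto L^{(\fb)}(s\cdot\lambda))$ on generators, for all simple $s$, must coincide since the simple reflections generate $W$ and one checks compatibility on overlaps exactly as above.

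The main obstacle I anticipate is the well-definedness across different Borels: one must verify that when a given simple reflection $s$ can be realised as an (almost) simple root for two genuinely different Borel subalgebras $\fb,\fb'$ (both with the same $\fb_{\oa}$), the two a priori different relabellings $\lambda\mapsto s\cdot\lambda$ agree on generic weights. This is exactly where the generic hypothesis is used, and where the earlier work \cite[Section~8.1]{CM1} does the heavy lifting — outside the generic region the star ``action'' is only a collection of star operations and need not assemble into a group action, so the whole statement genuinely fails there. Everything else (braid relations, quadratic relation, uniqueness) is then a formal consequence of properties of $T_s$ established in Section~\ref{secCompletion}.
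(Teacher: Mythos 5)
Your overall plan (fix an appropriate Borel for each simple reflection, reduce well-definedness across Borels and the group relations to \cite[Section~8]{CM1}) is broadly in the spirit of the paper's proof, which for uniqueness observes that any $W$-action is determined by its simple generators and each of those is pinned down by a suitable choice of $\fb$ (existence of such a Borel is via Serganova's odd-reflection machinery), and for existence simply cites \cite[Definition~8.1 and Theorem~8.10]{CM1}. However, the concrete mechanism you propose for building the bijection and verifying the relations contains a genuine error.

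You assert that for an $s$-free simple $L^{(\fb)}(\lambda)$ the twisting functor produces $T_s L^{(\fb)}(\lambda)$ with simple top $L^{(\fb)}(s\cdot\lambda)$, ``by \eqref{twistVer} and \cite[Theorem~5.12]{CM1}.'' This is not what those results say: \eqref{twistVer} concerns Verma modules, and \cite[Theorem~5.12]{CM1} gives that for $s$-free $L(\lambda)$ the top of $T_s L(\lambda)$ is $L(\lambda)$ itself, not $L(s\cdot\lambda)$. The relabelling $\lambda\mapsto s\cdot\lambda$ that defines $s\ast$ is a purely combinatorial operation on labels, not something read off from what $T_s$ does to a simple module. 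Consequently, your argument that the quadratic relation $s^2=e$ and the braid relations ``descend from'' the braid relations for $T_s$ does not work: the $T_s$ satisfy braid relations but certainly not $T_s^2\cong\id$, and since $T_s$ on simples does not implement $\lambda\mapsto s\cdot\lambda$ there is no such descent. You would still need the actual content of \cite[Definition~8.1 and Theorem~8.10]{CM1} (which establish both the independence of the Borel choice and the $W$-relations for the star operations in the generic region by direct combinatorics on weights) to make the existence argument go through, which is exactly what the paper invokes. So the correct plan is: (a) the Borel realising $\alpha$ or $\alpha/2$ as simple exists (Serganova), hence each generator's action is determined, giving uniqueness; (b) existence is \cite[Definition~8.1, Theorem~8.10]{CM1}, and the twisting-functor heuristics should be dropped.
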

\begin{proof}
Note that for a fixed Borel subalgebra $\fb_{\oa}$ of $\fg_{\oa}$ and corresponding even simple root $\alpha\in\Delta^+_{\oa}$, we can always choose a Borel subalgebra $\fb$ of $\fg$ with $\fb\cap\fg_{\oa}=\fb_{\oa}$ such that either $\alpha$ or $\alpha/2$ is simple in $\Delta^+$. This follows e.g. from \cite[Lemma 3.3 and Definition 3.5]{Vera} and the first example in Section 6 of {\it op. cit.}

The uniqueness follows immediately from the observation in the above paragraph, as for any simple reflection $s$ we can choose a Borel subalgebra to apply the stated condition. The existence follows from \cite[Definition~8.1 and Theorem~8.10]{CM1}.
\end{proof}
By the above proposition it is immediately clear that for $\fg=\mathfrak{gl}(m|n)$, this star action is just a disguised version of the usual $\rho$-shifted action of $W$ on $\Lambda$ for the distinguished Borel subalgebra. For this choice every even simple root is simple in $\Delta^+$ and the $\rho$-shifted action is the same as the $\rho_{\oa}$-shifted action. However, for $\fg=\mathfrak{osp}(m|2n)$ this action is a new concept. 

\subsection{}\label{defCM1} For a fixed $\fb$ we can introduce an action $\ast^{\fb}$ of $W$ on $\Lambda$, defined by the condition $L^{(\fb)}(s\ast^\fb\lambda)\cong s\ast L^{(\fb)}(\lambda)$. For simple reflections $s\in W$, the action $s\ast^{\fb}$ coincides with \cite[Definition~8.1]{CM1} restricted to generic weights, for an arbitrary star action map. Now the action $\ast^{\fb}$ of $W$ on $\Lambda$ is equal to the $\rho$-shifted action of Subsection~\ref{secrho} only if $\fg=\mathfrak{gl}(m|n)$ and $\fb$ is the distinguished Borel subalgebra.

\begin{thm}\label{generic}
Let $\le_L$ denote the left KL order on $W$ from \cite{KL}. 
\begin{enumerate}
\item Consider~$\fg=\mathfrak{gl}(m|n)$, with distinguished Borel subalgebra and $\lambda,\mu\in \Lambda_0$ which are dominant and generic, then
$$w_1\cdot\mu\KL w_2\cdot\lambda\quad\Leftrightarrow\quad\begin{cases}\mu=\lambda&\mbox{and}\\w_2\le_L w_1.\end{cases}$$
\item Consider~$\fg=\mathfrak{gl}(m|n)$ or~$\fg=\mathfrak{osp}(m|2n)$, with arbitrary Borel subalgebra $\fb$ and $\lambda,\mu\in \Lambda_0$ which are dominant and generic, then
$$w_1\ast^\fb \mu\KL^c w_2\ast^\fb\lambda\quad\Leftrightarrow\quad\begin{cases}\mu=\lambda&\mbox{and}\\w_2\le_L w_1.\end{cases}$$
\end{enumerate}
\end{thm}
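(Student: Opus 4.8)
The plan is to reduce everything to the situation of the distinguished Borel subalgebra in type $A$ (for part (1)) and then to propagate the statement across the various choices of $\fb$ and to $\mathfrak{osp}$ (for part (2)). First I would treat part (1). Here $\KL$ coincides with $\KL^c$ by Theorem~\ref{thmKLgl}, and by \cite[Theorem~5.12]{CM1} the generator $\nu\KL\lambda$ is controlled by $\Ext^1_{\cO}(L(\lambda),L(\nu))\ne 0$ together with the existence of a simple reflection $s$ with $L(\lambda)$ $s$-finite and $L(\nu)$ $s$-free. I would invoke the known $\Ext^1$-quiver in the generic region: by \cite[CLW, BLW]{Brundan} (equivalently, by the Brundan--Kazhdan--Lusztig theory which holds by Theorem~\ref{thmKLgl}), first extensions between generic simple highest weight modules are governed by the KL combinatorics of $W$ acting on the single orbit $W\cdot\lambda$, matching the $\fg_0$-picture; in particular $\Ext^1_{\cO}(L(w_2\cdot\lambda),L(w_1\cdot\mu))=0$ unless $\mu=\lambda$, and when $\mu=\lambda$ it reproduces the $\Ext^1$-quiver on the regular integral block of $\cO(\fg_0,\fb_0)$. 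Then the relation generating $\KL$ on generic weights is identified verbatim with the relation generating the left KL order $\le_L$ on $W$ (via Vogan's description, cf. \cite[Theorem~5.1]{CouMus} together with the $\fg_0$ case recalled in the introduction), giving $w_1\cdot\mu\KL w_2\cdot\lambda\Leftrightarrow\mu=\lambda$ and $w_2\le_L w_1$.

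Next I would prove part (2) by a two-step transport argument. Step one: for $\fg=\mathfrak{gl}(m|n)$ with the distinguished $\fb$, the star action $\ast^{\fb}$ is literally the $\rho$-shifted $W$-action (as noted after Proposition~\ref{propCM1} and in Subsection~\ref{defCM1}), and $\KL^c=\KL$ by Theorem~\ref{thmKLgl}, so part (2) for this case is exactly part (1). Step two: I would show the statement $w_1\ast^{\fb}\mu\KL^c w_2\ast^{\fb}\lambda\Leftrightarrow(\mu=\lambda \text{ and } w_2\le_L w_1)$ is independent of the choice of $\fb$ with a fixed $\fb_0$. The point is that $\KL^c$ is defined purely in terms of twisting functors $T_s$ on $\cO(\fg,\fb_0)$, which depend only on $\fb_0$, while the labelling of simple modules by the star action was precisely engineered (Proposition~\ref{propCM1}) so that for $s=s_\alpha$ with $\alpha$ or $\alpha/2$ simple in $\Delta^+$ one has $T_sL^{(\fb)}(\lambda)\cong$ a module with radical $U_s(\lambda)$ and top $L^{(\fb)}(s\cdot\lambda)=L^{(\fb)}(s\ast^{\fb}\lambda)$ when $L^{(\fb)}(\lambda)$ is $s$-free (via \eqref{twistVer}, \eqref{Jmiddle}, and Proposition~\ref{propCM1}). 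Since for \emph{every} simple reflection $s$ of $W$ there is \emph{some} admissible $\fb$ (the first observation in the proof of Proposition~\ref{propCM1}), the $\KL^c$-generating relations expressed through the star action take the same shape for every admissible $\fb$, so the induced quasi-order on $W$-orbits, read through $\ast^{\fb}$, is the same. Hence part (2) for arbitrary $\fb$ (and both $\mathfrak{gl}$ and $\mathfrak{osp}$) follows from the distinguished-$\fb$ $\mathfrak{gl}$-case, provided one also checks that in the generic region $\KL^c$ never relates two different $W$-orbits — this follows since $T_s$ preserves the generic region (\cite[Section~8]{CM1}) and, as in part (1), the relevant $\Ext^1$'s / subquotient relations vanish across distinct orbits.

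The step I expect to be the main obstacle is pinning down the generic $\Ext^1$-quiver and the subquotient structure of $T_sL(\lambda)$ precisely enough to match $\le_L$ on $W$, rather than merely up to the orbit-combinatorics. Concretely: I must verify that, for generic dominant $\lambda$, the Jantzen middle $U_s(w\cdot\lambda)$ in $\cO^{\Lambda_0}(\mathfrak{gl}(m|n),\fb_{\mathrm{dist}})$ is exactly the one predicted by the $\fg_0$ KL polynomials restricted to the single regular orbit — i.e. that the generic region of $\cO$ for $\mathfrak{gl}(m|n)$ is ``equivalent enough'' to a regular integral block of $\cO(\fg_0,\fb_0)$ at the level needed. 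I would get this from the combination of Theorem~\ref{thmAS}/Theorem~\ref{thmKLgl} (semisimplicity of $U_s$, hence $U_s(\lambda)=\bigoplus L(\nu)^{\oplus\mu(\lambda,\nu)}$ by Proposition~\ref{PropU}) and the explicit generic multiplicities $\mu(\lambda,\nu)$ extracted from \cite{CLW, BLW}; the bookkeeping to confirm these coincide with the $\fg_0$ values and hence with $\le_L$ (again via \cite[Theorem~5.1]{CouMus}) is where the real work lies. Part (2)'s transport argument, once part (1) is in hand, is then essentially formal, relying only on the defining property of the star action in Proposition~\ref{propCM1} and the $\fb_0$-invariance of $T_s$ and of $\KL^c$.
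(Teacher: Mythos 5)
Your plan diverges from the paper's proof in two substantive ways, one of which is a genuine gap.

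First, the gap: you cannot deduce the $\mathfrak{osp}(m|2n)$ case by ``transport'' from the $\mathfrak{gl}(m|n)$ case. Your Step two shows $\fb$-independence for a \emph{fixed} $\fg$ (using that $T_s$, $\KL^c$, and the modules themselves depend only on $\fb_{\oa}$, and that $\ast^{\fb}$ is designed to compensate for the relabelling), and that part is fine — indeed the paper's Lemma~\ref{LemRes} opens with ``By definition it suffices to prove this for one specific Borel subalgebra for each $\fg$.'' But $\mathfrak{osp}$ is a different Lie superalgebra with a different category $\cO$; nothing connects its generic simples to those of $\mathfrak{gl}$. The paper treats $\mathfrak{osp}$ by a separate, nontrivial computation inside Lemma~\ref{LemRes}: it verifies equation \eqref{eq1} for the one simple reflection $s_{2\delta_n}$ that does \emph{not} preserve $\Delta^+_{\ob}$, splitting into cases (A) and (B) according to whether some odd root $\gamma$ with $\langle\gamma,\delta_n\rangle\ne0$ satisfies $\langle\nu+\rho,\gamma\rangle=0$, and checking that the shift $|D_\nu|$ works out to $\rho_{\ob}-s(\rho_{\ob})$ (resp.\ $\rho_{\ob}-s(\rho_{\ob})+s(\gamma)$). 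None of this is ``formal.''

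Second, the route to part (1). You go through the generic $\Ext^1$-quiver, arguing that BKL multiplicities in the generic region reduce to ordinary KL multiplicities on a single regular $W$-orbit, combined with Proposition~\ref{PropU}. You rightly flag this ``bookkeeping'' as the main obstacle — but you do not do it, and it is not straightforward (e.g.\ one would also need to rule out $\Ext^1$'s between simples with $\mu\ne\lambda$ that land in the same block, which is not automatic in the super setting). The paper bypasses all of this: it uses Penkov's explicit restriction formula \eqref{eqPenkov} to prove Lemma~\ref{LemRes} (that $\res L^{(\fb)}(w\ast^{\fb}\mu)$ decomposes as $\bigoplus_{\kappa\in R_\mu}L_0(w\circ\kappa)$ over a fixed set of $\circ$-regular dominant weights $R_\mu$, $W$-equivariantly), and then works purely in the Grothendieck group: since $T_s^0$ commutes with translation, $[T_s^0 L_0(w\circ\kappa)]=\sum_{x\in E^s_w}[L_0(x\circ\kappa)]$ with $E^s_w$ independent of $\kappa$, and equation \eqref{eqTRI} plus Lemma~\ref{LemRes} transfer this verbatim to $[T_s L(w\ast\mu)]$. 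That immediately identifies the generic $\KL^c$-order with $\le_L$, and part (1) then follows as a special case via Proposition~\ref{propCM1} and Theorem~\ref{thmKLgl}. The paper's approach is thus both more uniform (it handles $\mathfrak{gl}$ and $\mathfrak{osp}$ by the same mechanism) and avoids the BKL-polynomial comparison you identify as the hard step.
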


\subsection{}Before proving this we need to recall the result of Penkov in \cite[Theorem~2.2]{Penkov}. For any $\nu\in\Lambda_0$, we set $S_\nu$ equal to the subset of $\Delta_{\ob}^+$ of odd roots $\gamma$ which satisfy $\langle\nu+\rho,\gamma\rangle\not=0$. 
For any subset $I\subset S_\nu$, we denote by $|I|\in \Lambda_0$ the sum of all the roots in the subset. Then for $\nu$ generic we have
\begin{equation}\label{eqPenkov}\res L(\nu)\;\cong\;\oplus_{I\subset S_\nu}L_0(\nu-|I|).
\end{equation}
Note also that for any simple reflection $s\in W$ we clearly have
\begin{equation}\label{eqSs}
S_{s\cdot\nu}\;=\; (s(S_\nu)\cap \Delta_{\ob}^+)\;\,\cup \;\,(-s(S_\nu)\cap \Delta_{\ob}^+).
\end{equation}

\begin{lemma}\label{LemRes}
Consider~$\fg=\mathfrak{gl}(m|n)$ or~$\fg=\mathfrak{osp}(m|2n)$, with arbitrary Borel subalgebra $\fb$ and $\mu\in \Lambda_0$ dominant and generic. For $\kappa\in\Lambda_0$ dominant and any $w,w'\in W$ we have
$$[\res L^{(\fb)}(w\ast^{\fb}\mu): L_0(w'\circ \kappa)]\;=\; \delta_{w,w'} [\res L^{(\fb)}(\mu): L_0(\kappa)].$$
Hence there is a set of $\circ$-regular dominant weights $R_\mu$ such that for all $w\in W$,
$$\res L^{(\fb)}(w\ast^{\fb} \mu)=\bigoplus_{\kappa\in R_\mu}L_0(w\circ \kappa).$$
\end{lemma}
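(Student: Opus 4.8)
The plan is to reduce everything to the known decomposition of $\res L^{(\fb)}(\nu)$ for generic $\nu$ given by Penkov's formula \eqref{eqPenkov}, and to control how $\ast^\fb$ interacts with $\res$ and with the $W$-action on $\fg_\oa$-modules. First I would reduce to the case of a single simple reflection $s\in W$: since the star action $\ast^\fb$ is generated by the maps $s\ast^\fb$ and $\circ$ is a genuine $W$-action, an induction on the length of $w$ (writing $w=s w_0$ with $\len(w)=\len(w_0)+1$) lets me assume the statement for $w_0$ and prove it for $w$. Concretely, by Proposition~\ref{propCM1} I may pick the Borel $\fb$ so that $s=s_\alpha$ with $\alpha$ or $\alpha/2$ simple in $\Delta^+$; then $s\ast^\fb$ coincides with the ordinary $\rho$-shifted action, so $L^{(\fb)}(s\ast^\fb\nu)=L^{(\fb)}(s\cdot\nu)$, and I want to compare $\res L^{(\fb)}(s\cdot\nu)$ with $\res L^{(\fb)}(\nu)$.

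The key computational step is the identity \eqref{twistVer} together with \eqref{eqTRI}: for $L(\nu)$ that is $s$-free (i.e. $\nu\le s\cdot\nu$), applying $T_s$ to the short exact sequence involving $\Delta(\nu)$, or more directly using $T_s^0\circ\res\cong\res\circ T_s$ and the fact that in the generic region $T_s L^{(\fb)}(\nu)$ is either simple or has $L^{(\fb)}(s\cdot\nu)$ on top with an $s$-finite radical (by \eqref{Jmiddle} and the genericity, which forces the Jantzen middle to be trivial — all relevant weights decompose through disjoint $W$-orbits so no $\Ext^1$ survives), I get that $\res L^{(\fb)}(s\cdot\nu)$ is obtained from $\res L^{(\fb)}(\nu)$ by applying the twisting functor $T_s^0$ termwise. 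On the $\fg_\oa$-side, $T_s^0 L_0(\kappa)$ is $L_0(s\cdot\kappa)$ when $L_0(\kappa)$ is $s$-free and $0$ when $s$-finite (by \eqref{eqLT} for Lie algebras). But genericity of $\mu$ guarantees that every constituent $L_0(w'\circ\kappa)$ appearing in $\res L^{(\fb)}(w\ast^\fb\mu)$ has $\kappa$ $\circ$-regular and dominant, so all of them are simultaneously $s$-free (none is $s$-finite) precisely because the weights $\kappa$ stay deep inside the dominant chamber — this is where I would invoke \eqref{eqPenkov} and \eqref{eqSs} to see that $S_{w\ast^\fb\mu}$ and the shifts $|I|$ never push any $w'\circ\kappa$ onto or across a wall. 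Hence $T_s^0$ simply relabels each $L_0(w'\circ\kappa)\mapsto L_0(s\cdot(w'\circ\kappa))=L_0((sw')\circ\kappa)$, which is exactly the claimed shift of the index from $w'$ to $sw'$, matching the inductive hypothesis.

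From the multiplicity statement the second assertion is then formal: take $\kappa$ ranging over the $\circ$-regular dominant weights occurring in $\res L^{(\fb)}(\mu)$ (each with multiplicity one, again by \eqref{eqPenkov} since the $\nu-|I|$ for distinct $I$ are distinct and generic), call this set $R_\mu$, and the displayed decomposition of $\res L^{(\fb)}(w\ast^\fb\mu)$ follows by summing. I would also remark that $R_\mu$ is independent of $w$ by construction, which is the content of the ``Hence'' clause.

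The main obstacle I anticipate is verifying cleanly that in the generic region no constituent of $\res L^{(\fb)}(w\ast^\fb\mu)$ is ever $s$-finite, i.e. that the twisting functor $T_s^0$ acts by a bijection (not killing anything) on the constituents — equivalently, that the weights $w'\circ\kappa$ stay $\circ$-regular after the reflection. This requires a careful bookkeeping argument combining the definition of generic (Definition~7.1 of \cite{CM1}) with \eqref{eqSs} to bound how far the odd-root shifts $|I|$ can move a weight; the genericity hypothesis is precisely designed so that the even-root distances to the walls dominate these shifts, but making this quantitative is the delicate point. A secondary, more bookkeeping-heavy point is handling the case $s\notin W_{\Lambda}$ (where $\ast^\fb$ may change the integral Weyl group data), but since we work within a fixed $\Lambda_0$ and $\le_L$ is the full left KL order on $W$, this is subsumed by choosing the Borel appropriately as in Proposition~\ref{propCM1}.
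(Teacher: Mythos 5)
Your proposal has a fundamental gap in the way it uses twisting functors. The key claim ``on the $\fg_{\oa}$-side, $T_s^0 L_0(\kappa)$ is $L_0(s\cdot\kappa)$ when $L_0(\kappa)$ is $s$-free'' is false. By \eqref{Jmiddle}, when $L_0(\kappa)$ is $s$-free the twisting functor produces a short exact sequence
$$0\to U_s(\kappa)\to T_s^0 L_0(\kappa)\to L_0(\kappa)\to 0,$$
so the top of $T_s^0 L_0(\kappa)$ is $L_0(\kappa)$ itself, not $L_0(s\circ\kappa)$. Similarly, your claim that $T_s L^{(\fb)}(\nu)$ has $L^{(\fb)}(s\cdot\nu)$ on top is wrong — it has $L^{(\fb)}(\nu)$ on top. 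Twisting shifts Verma modules \eqref{twistVer}, not simples. The secondary claim that genericity forces the Jantzen middle to vanish is also incorrect: for strongly typical central characters (which generic weights have), Proposition~\ref{propGor} gives an equivalence $\cO_\chi \cong \cO_{\chi^0}(\fg_{\oa})$, and the right-hand side has its full complement of nonzero $\Ext^1$'s and nontrivial Jantzen middles; ``no $\Ext^1$ survives'' is not what genericity buys. Finally, even if one could repair the twisting-functor statement, the argument would be dangerously close to circular, since the Grothendieck-group identity for $T_s$ acting on generic simples is exactly what Theorem~\ref{generic} proves \emph{using} Lemma~\ref{LemRes} together with \eqref{eqTRI}.

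The paper instead proves Lemma~\ref{LemRes} by a direct combinatorial manipulation of Penkov's formula \eqref{eqPenkov}. For $\mathfrak{gl}(m|n)$ with the distinguished Borel, $W$ preserves $\Delta^+_{\ob}$, so \eqref{eqSs} gives $S_{w\cdot\mu}=w(S_\mu)$ and the decomposition transforms cleanly; for $\mathfrak{osp}(m|2n)$ one fixes a convenient Borel, argues one simple reflection at a time, and for the exceptional reflection $s_{2\delta_n}$ separately computes the sum $|D_\nu|$ of roots in $D_\nu=-s(S_\nu)\cap\Delta_{\ob}^+$, showing it equals $\rho_{\ob}-s(\rho_{\ob})$ (possibly shifted by one extra odd root via \cite[Lemma~8.12]{CM1}). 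The point is to rewrite the resulting sum so that all shifts appear through the $\rho_{\oa}$-shifted action $\circ$, which then visibly commutes with $W$. This is the bookkeeping you identified as the ``delicate point,'' and it is handled directly rather than via twisting functors. Your reduction to a single simple reflection and the use of Proposition~\ref{propCM1} to choose $\fb$ freely are both sound and match the paper's structure, but the functorial mechanism you interpose does not do what you need.
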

\begin{proof}
By definition it suffices to prove this for one specific Borel subalgebra for each $\fg$. For $\fg=\mathfrak{gl}(m|n)$ we consider the distinguished Borel subalgebra, meaning that $\ast^{\fb}$ is the $\rho$-shifted action. As $\Delta_{\ob}^+$ is preserved by the Weyl group action, equation~\eqref{eqSs} implies $S_{w\cdot\mu}=w(S_{\mu})$. Equation~\eqref{eqPenkov} therefore shows that for any $w\in W$
$$\res L(w\cdot\mu)\;\cong\; \oplus_{I\subset S_\mu}L_0(w\cdot(\mu-|I|)).$$
The claim hence follows from the fact that the $\rho$-shifted action equals the $\rho_{\oa}$-shifted action.

For $\fg=\mathfrak{osp}(m|2n)$ we consider the Borel subalgebra and notation for the corresponding simple roots of \cite[Section 8.2]{CM1}. Consider some generic integral $\nu\in\fh^\ast$. Then, by Proposition~\ref{propCM1}, for any simple reflection $s$ different from $s_{2\delta_n}$ we have $s\circ\nu=s\cdot\nu=s\ast^{\fb}\nu$. We also have $S_{s\cdot\nu}=s(S_\nu)$ for such $s$ by equation~\eqref{eqSs}, as $s$ preserves $\Delta^+_{\ob}$. Therefore equation~\eqref{eqPenkov}
implies that 
\begin{equation}\label{eq1}\res L(s\ast^\fb\nu)\;\cong\; \bigoplus_{I\subset S_\nu}L_0(s\circ (\nu-|I|)).\end{equation}
Now consider $s=s_{2\delta_n}$. We will prove that equation~\eqref{eq1} still holds. Based on equation~\eqref{eqSs} we introduce $D_\nu:=-s(S_\nu)\cap \Delta_{\ob}^+$ which, for $s=s_{2\delta_n}$, is given by
$$D_\nu=\{-s(\gamma)\,|\,\gamma\in\Delta^+_{\ob}\,\mbox{ with $\langle \nu+\rho,\gamma\rangle\not=0$ and $\langle \gamma, \delta_n\rangle\not=0$}\}.$$
Based on this we consider two cases (A) and (B).

(A) Assume that there is no $\gamma\in \Delta^+_{\ob}$ with $\langle \nu+\rho,\gamma\rangle=0$ and $\langle \gamma, \delta_n\rangle\not=0$. Then, by \cite[Lemma 8.12]{CM1}, we have $s\ast^\fb\nu=s\cdot\nu$ and by \eqref{eqPenkov}
\begin{equation}\label{eq2}\res L(s\ast^\fb\nu)\;\cong\; \bigoplus_{I\subset S_{s\cdot\nu}}L_0(s\cdot\nu-|I|).\end{equation}
By equation \eqref{eqSs} we have
$$\{|I|\,|\, I\subset S_{s\cdot\nu}\}=\{|I|+\Sigma\,|\, I\subset s(S_{\nu})\}=\{s(|I|)+\Sigma\,|\, I\subset S_{\nu}\},$$
with $\Sigma=|D_\nu|$. By assumption we have
$D_\nu=\{\gamma\in \Delta_{\ob}^+\,|\,\langle\gamma,\delta_n\rangle\not=0\}$ and it then follows easily that $\Sigma=\rho_{\ob}-s(\rho_{\ob})$.
This means that equation~\eqref{eq2} can indeed be rewritten as equation~\eqref{eq1}. 

(B) Assume there is a $\gamma\in \Delta^+_{\ob}$ with $\langle \lambda+\rho,\gamma\rangle=0$ and $\langle \gamma, \delta_n\rangle\not=0$. This $\gamma$ must be unique, see e.g. \cite[Lemma 7.7]{CM1}. Then $s\ast^\fb \nu=s\cdot\nu+s(\gamma)$ by \cite[Lemma 8.12]{CM1}, so by \cite[Lemma~7.8]{CM1} we have $S_{s\ast^\fb \nu}=S_{s\cdot\nu}$ and then by \eqref{eqPenkov} we find
\begin{equation}\label{eq3}\res L(s\ast^\fb\nu)\;\cong\; \bigoplus_{I\subset S_{s\cdot\nu}}L_0(s\cdot\lambda+s(\gamma)-|I|).\end{equation}
Similarly as in (A) we can calculate that now $|D_\nu|=\rho_{\ob}-s(\rho_{\ob})+s(\gamma)$, showing that also equation~\eqref{eq3} can be rewritten in the form of \eqref{eq1}.

In conclusion, equation~\eqref{eq1} holds for arbitrary simple reflections, from which the claim follows by iteration.
\end{proof}

\begin{pfGeneric}
It suffices to prove claim (2), as claim (1) is a special case, by Proposition~\ref{propCM1} and Theorem \ref{thmKLgl}. We fix a Borel subalgebra $\fb$ and leave out reference to it in simple modules and star actions. We will work in the Grothendieck groups of category~$\cO$ for $\fg$ and $\fg_{\oa}$, for which the images of the simple modules constitute a basis.

Take $w\in W$ and a simple reflection $s\in W$. As twisting functors commute with translation functors, see \cite[Theorem~3.2]{AS}, there is a set $E_w^s$ of elements of the Weyl group such that for each integral $\circ$-regular dominant $\kappa\in\fh^\ast$ we have
$$[T_s^0 L_0(w\circ\kappa)]=\bigoplus_{x\in E^s_w}[L_0(x\circ\kappa)].$$
Lemma~\ref{LemRes} and equation \eqref{eqTRI} then imply that for $\mu$ integral dominant and generic
$$[T_s L(w\ast\mu)]=\bigoplus_{x\in E^s_w}[L(x\ast\mu)],$$
for the same set $E_w^s$. This concludes the proof.
 \qed

\end{pfGeneric}

\subsection*{Acknowledgement}
The author wishes to thank Volodymyr Mazorchuk and Catharina Stroppel for stimulating discussions, Ian Musson for discussions during a previous attempt to prove the conjecture and Maria Gorelik and Ian Musson for useful comments on the first version of the manuscript. The research was supported by Australian Research Council Discover-Project Grant DP140103239 and an FWO postdoctoral grant.

\begin{flushleft}
	K.~Coulembier\qquad \url{kevin.coulembier@sydney.edu.au}
	
	School of Mathematics and Statistics, University of Sydney, NSW 2006, Australia

\end{flushleft}

\end{document}